\newcommand{\R}{ \mathbb{R} }
\newcommand{\W}{ \mathcal{W} }
\newcommand{\eps}{\varepsilon}
\newcommand{\e}{ \text{e} }
\newcommand{\C}{ \mathbb{C} }
\newcommand{\Z}{ \mathbb{Z} }
\newcommand{\N}{ \mathbb{N} }
\newcommand{\E}{ \mathbb{E} }
\renewcommand{\H}{ \mathcal{H} }
\newcommand{\dis}{\displaystyle}
\newcommand{\om}{ \omega }
\newcommand{\ov}{ \overline }
\renewcommand{\a}{ \alpha }
\newcommand{\s}{ \sigma }
\renewcommand{\phi}{ \varphi }
\newcommand{\<}{ \langle }
\renewcommand{\>}{ \rangle }
\numberwithin{equation}{section}
\author{ Nicolas Burq}
\address{Laboratoire de Math\'ematiques, B\^at. 425,
Universit\'e Paris Sud, 91405 Orsay Cedex, France}
\email{nicolas.burq@math.u-psud.fr}
\author{ Laurent Thomann }
\address{Laboratoire de Math\'ematiques J. Leray, Universit\'e de Nantes, UMR CNRS 6629\\
2, rue de la Houssini\`ere,
44322 Nantes Cedex 03, France}
\email{laurent.thomann@univ-nantes.fr}
\author{ Nikolay Tzvetkov}
\address{University of Cergy-Pontoise, UMR CNRS 8088, Cergy-Pontoise, F-95000}
\email{nikolay.tzvetkov@u-cergy.fr}
\title[Long time dynamics for the 1d NLS]
{Long time dynamics for the one dimensional non linear Schr\"odinger equation}
\begin{document}
\begin{abstract}
In this article, we first present the construction of Gibbs measures associated to nonlinear Schr\"odinger equations with 
harmonic potential. Then we show that the corresponding Cauchy problem is globally
well-posed for rough initial conditions 
in a statistical set (the support of the measures). Finally, we prove that the
Gibbs measures are indeed invariant by the flow of the equation.
As a byproduct of our analysis, we give a global well-posedness and scattering
result for the $L^2$ critical and super-critical NLS (without harmonic potential).
\end{abstract}
\subjclass{35BXX ; 37K05 ; 37L50 ; 35Q55}
\keywords{Nonlinear Schr\"odinger equation, potential, random data, Gibbs measure, 
invariant measure, global solutions}
\thanks{The authors were supported in part by the grant ANR-07-BLAN-0250.}
\date{}
\maketitle
\section{Introduction}
The purpose of this work is twofold. First we construct Gibbs measures and prove their 
invariance by the flow of the nonlinear (focusing and defocusing) Schr\"odinger equations 
(defined in a strong sense) in the presence of a harmonic potential. In the construction of these measures, most of the 
difficulties appear for the focusing case (for which case our results are only true for 
the cubic non linearity while in the defocusing case we have no restriction on the size of the non linearity). 
The non linear harmonic oscillator appears as a model in the context of
Bose-Einstein condensates and our result gives some insights concerning the long time dynamics of 
these models. The second purpose of this work is to prove global well-posedness  for the $L^2$ 
critical and super-critical nonlinear Schr\"odinger equation (NLS) on $\R$, 
with or without harmonic potential, for data of low regularity. Furthermore, we also obtain 
scattering when there is no harmonic potential. 
Such kind of result seems to be out of reach of the present critical regularity 
deterministic methods. 
\subsection{The NLS with harmonic potential }
Our analysis here on the NLS with harmonic potential 
enters into the line of research initiated by Lebowitz-Rose-Speer in \cite{LeRoSp} and
aiming to construct Gibbs measures for Hamiltonian PDE's. This program offers
analytic challenges both in the measure construction and in the construction of
a well-defined flow on the support of the measure. Usually the support of the
measure contains low regularity functions and this fact may be seen as one of
the motivations of studying low regularity well-posedness of Hamiltonian
PDE's. The approach of \cite{LeRoSp} has been implemented successfully in
several contexts, see e.g. Bourgain \cite{Bourgain1,Bourgain2}, Zhidkov
\cite{Zhidkov}, Tzvetkov \cite{Tzvetkov1,Tzvetkov2,Tzvetkov3},
Burq-Tzvetkov \cite{BT1}, Oh \cite{Oh1,Oh2}, and the references therein.

A very natural context where one may try to construct Gibbs measures is the
Nonlinear Schr\"odinger equation (NLS) with harmonic potential. Indeed, in
this case the spectrum of the linear problem is discrete and the construction
of \cite{LeRoSp} applies at least at formal level. As we already mentioned 
this context
is natural since the NLS with harmonic potential appears as a model in the Bose-Einstein condensates. 
As we shall see, it turns out that the construction of \cite{LeRoSp} provides a
Gibbs measure supported by functions for which the corresponding Cauchy
problem was not known to be well-posed. In addition the density of the measure
can not be evaluated by applying only deterministic arguments such as the
Sobolev inequality. All these facts present serious obstructions to make
rigourous the Gibbs measure construction.

On the other hand, recent works as \cite{BT1,BT2,Tzvetkov3} showed that by
applying more involved probabilistic techniques in combination with the
existing deterministic technology for studying these problems one may approach
the above difficulties successfully. In particular, in \cite{BT1} an approach
to handle regularities for which the corresponding Cauchy problem is ill-posed
is developed. Our goal here is to show that the NLS with harmonic oscillator
fits well in this approach. In fact, the eigenfunctions of the linear operator
enjoy good estimates which is compensated by the bad separation properties of
the spectrum. Such a situation is particularly well adapted for the approach
of \cite{BT1,BT2}.

We are able to construct Gibbs measures and the corresponding flow for
the cubic focusing and arbitrary defocusing NLS in the presence of
harmonic potential. The analysis turns out to contain several significant new
points with respect to previous works on the subject. Indeed, it seems that it is the first case 
where the construction and analysis of Gibbs measure with a strong flow 
can be performed on a non compact phase space.
Furthermore, taking into account the low regularity of the initial data, to develop a nice local Cauchy 
theory at this level of regularity, one has to obtain somehow a gain in terms of derivatives. In the context 
of wave equations, this gain can be obtained (see~\cite{BT3}) rather easily by first proving a gain at 
the probabilistic level in terms of $L^p$ regularity, and then balancing this gain on the non linearity and 
using that the non homogeneous wave propagator itself gains one derivative with respect to the source term 
regularity. For Schr\"odinger equations, the situation is much less well behaved. Indeed, no such gain of 
regularity occurs for the non homogeneous Schr\"odinger propagator, and the starting point of our analysis 
was precisely that a gain of derivatives occurs at the probabilistic level in terms of $L^p$ regularity. 
However, this gain which would allow to perform the analysis for low power nonlinearities ($k \leq 7$) 
falls well short of what is needed to obtain the full range result ($k < + \infty$) in Theorem~\ref{thm4}. 
As a consequence, our analysis requires a full bi-linear analysis at the probabilistic 
level (see~\eqref{eq.carre}).  Finally, let us mention some previous works on the non linear harmonic Schr\"odinger equation~\eqref{nlsk}.
The (deterministic) Cauchy problem for \eqref{nlsk} was studied by Carles in \cite{Carles}, whereas in \cite{Reika}, Fukuizumi studies the stability of standing waves associated to \eqref{nlsk}.

Let us now describe in more details our results and consider thus the one dimensional cubic Schr\"odinger equation with harmonic potential
\begin{equation}\label{nlsk}
\left\{
\begin{aligned}
&i\partial_t u+\partial_x^2 u -x^{2}u = \kappa_0 |u|^{k-1}u,\quad
(t,x)\in\R\times {\R},
\\
&u(0,x)= f(x),
\end{aligned}
\right.
\end{equation} 
where $k\geq 3$ is an odd integer and where either $\kappa_{0}=1$ 
(defocusing case) or $\kappa_{0}=-1$ (focusing case). The case of cubic nonlinearity, i.e. $k=3$ is the 
one which is most relevant in the context of Bose-Einstein condensates.

We now state our result concerning (\ref{nlsk}). For more detailed results, see Theorem~\ref{prop}
and Theorem~\ref{thm4} below.
\begin{theo}
Consider the $L^2$ Wiener measure on $\mathcal{D}' (\mathbb{R})$, $\mu$, constructed on the 
harmonic oscillator eigenbasis, i.e. $\mu$ is the distribution of the random variable 
$$
\sum_{n=0}^{\infty}\sqrt {\frac{2}{2n+1}}g_{n}(\omega)h_n(x),
$$
where $(h_n)_{n=0}^{\infty}$ are the Hermite functions (see \eqref{formula}) and 
$(g_n)_{n=0}^{\infty}$ is a system of standard independent complex Gaussian random variables. 
Then in the defocusing case, for any order of nonlinearity $k<+\infty$, and in the focusing 
case for the cubic non linearity, the Cauchy problem~\eqref{nlsk} is 
globally well posed for $\mu$-almost every initial data. Furthermore, in both cases, there exists a 
Gibbs measure, absolutely continuous with respect to 
$\mu$, which is invariant by this flow. 
\end{theo}
The equation \eqref{nlsk} is a Hamiltonian PDE with a Hamiltonian $J(u)$ (see \eqref{J} below).
As usual the Gibbs measure is a suitable renormalisation of the formal object $\exp(-J(u))du$. Let us recall that the distribution function of a standard ($0$ mean and $1$ variance)  Gaussian complex random variable  is 
$$ \frac 1 \pi e^{- |z|^2} dL,$$
where $dL$ is the Lebesgue measure on $\mathbb{C}$.

Notice that the results above are not in the "small data" class of results. Indeed, it follows from 
our analysis that the measure $\mu$ is such that for every $p>2$ and every $R>0$ $\mu(u : \|u\|_{L^p}>R)>0$,
i.e. our statistical set contains ``many'' initial data which are arbitrary large in $L^p(\R)$, $p>2$. 
Moreover, we use no smallness argument in any place of the proof.

We conjecture that our results 
hold when $x^{2}$ is replaced with a potential $V\in \mathcal{C}^{\infty}(\R,\R_{+})$, so that
$V(x)\sim x^2$ for $|x|\gg 1$ and $|\partial_{x}^{j}V(x)|\leq C_{j}\<x\>^{2-|j|}$
(in particular in such a situation there exists $C>0$ so that 
$\lambda^{2}_{n}\sim Cn$).

Let us define the Sobolev spaces ${\mathcal H}^s$, 
associated to the harmonic oscillator $-\partial_x^2+x^2$ via the norm
$\|u\|_{{\mathcal H}^s}=\|(-\partial_x^2+x^2)^{s/2}u\|_{L^2}$.
As can easily be seen, for any $s\geq 0$, the Sobolev space of regularity $s$, 
$\mathcal{H}^s( \mathbb{R})$ has zero $\mu$ measure but for every $s<0$ the space 
${\mathcal H}^s$ is of full $\mu$ 
measure. As a consequence, the initial data in our result 
is not covered by the present well-posedness theory for (\ref{nlsk}). 
What is even worse: according to Christ, Colliander, Tao~\cite{CCT} and~\cite[Appendix]{BGT} 
(notice that these results do not apply stricto sensu to the harmonic oscillator, but the proof can 
easily be modified), we know that as soon as $k\geq 7$ the system~\eqref{nlsk} is supercritical and 
there exists no continuous flow on the Sobolev spaces $H^{s}$, for $s\in (0, \frac{1}{2}-\frac{2}{k-1})$. 
As a consequence, even in the local in time analysis we need to appeal to a bi-linear probabilistic argument.
The bi-linear nature of our probabilistic analysis can be seen though the following statement
\begin{equation}\label{eq.carre}
\forall\,\theta<1/2,\,\,
\forall\,t\in\R,\quad
\|(e^{-it H} u)^2\|_{\mathcal{H}^{\theta}} <+ \infty,\quad\mu\,\, {\rm almost\,\, surely}.
\end{equation}
In our actual proof we do not make use of (\ref{eq.carre}) but it was the starting point of our analysis
for large $k$'s. We give the proof of \eqref{eq.carre} in the appendix of this article.
\subsection{Global well-posedness and scattering for the ``usual'' 
$L^2$ critical ad super-critical NLS on $\R$}
It turns out that the result described in the previous section has an interesting byproduct. 
Thanks to an explicit transform, we are able to prove a scattering result for the $L^2$ critical 
and super-critical equation 
\begin{equation}\label{nls5}
\left\{
\begin{aligned}
&i\partial_t u+\partial_x^2 u = |u|^{k-1}u,\quad k\geq 5,\quad
(t,x)\in\R\times {\R},\\
&u(0,x)= f(x)
\end{aligned}
\right.
\end{equation}
for $f(x)$ of ``super-critical'' regularity.
\begin{theo}\label{thm5}
For any $0<s<1/2$,
the equation \eqref{nls5} 
has for $\mu$-almost every initial data a unique global solution satisfying
$$u(t,\cdot)-\e^{-it\Delta}f\in {C}\big( \R; {\mathcal H}^{s}(\R) \big)$$
(the uniqueness holds in a space continuously embedded in ${C}\big( \R; {\mathcal H}^{s}(\R) \big)$).
Moreover, the solution scatters in the following sense. There exists $\mu$ a.s. states 
$g_{\pm}\in {\mathcal H}^{s}(\R)$ so that 
\begin{equation*}
\|u(t,\cdot)-\e^{it\Delta}(f+g_{\pm})\|_{{\mathcal H}^{s}(\R)}\longrightarrow 0,\quad 
\text{when}\quad t\longrightarrow \pm\infty.
\end{equation*}
\end{theo}
The result of Theorem~\ref{thm5} is a large data result and as far as we know there is no large data 
scattering results for the problem (\ref{nls5}) for data which are localized (tending to zero at infinity) 
but missing $H^1$, i.e. it seems that the result of Theorem~\ref{thm5} is out of reach of the present deterministic results to get scattering.
We refer to \cite{Kenji} for deterministic scattering results for \eqref{nls5} in
Sobolev spaces, $H^s$, $s\geq 1$. We also refer to \cite{KTV} for an approach for obtaining scattering results for $L^2$ critical problems.

The result of Theorem~\ref{thm5} is based on a transformation which reduces (\ref{nls5}) to a problem
which fits in the scope of applicability of our previous analysis. However (except in the scale invariant case $p=5$), the reduced problem is not
autonomous which makes the arguments more delicate. In particular there is no conserved energy for the
reduced problem. However, we will be able to substitute this lack of conservation law by a monotonicity
property which in turn will lead to the fact that, roughly speaking, the measure of a set can not decrease 
along the flow which is the key of the globalization argument. As a consequence, we are able to carry out the global existence strategy whilst no invariant measure is available (see also Colliander-Oh~\cite{CO1, CO2} for results in this direction).

\subsection{Plan of the paper}
In the following section, we present in details the construction of the Gibbs measure and we give a detailed measure invariance statement. In Section~3, we give the proof 
of the approximation property of the Gibbs measure by ``finite dimensional'' measures.
In the next section, we establish a  functional calculus of 
$-\partial_x^2+x^2$, fundamental for the future analysis. The following two sections are devoted to establishing two families of linear 
dispersive estimates, namely the Strichartz and the local smoothing estimates. Here we develop the very
classical deterministic estimates but also the more recent stochastic variants of them. The interplay
between these two families of estimates is at the heart of our approach. In Section~7, we use the estimates 
of the two previous sections together with the functional calculus to develop a local Cauchy  theory.
In Section~8, we present the global arguments, leading to almost sure global well-posedness on the support 
of the measure. Notice that the path followed here has been much clarified with respect to our previous papers, and consequently is much more versatile. In Section~9, 
we prove the measure invariance. Section~10 is devoted to the proof of 
Theorem~\ref{thm5}. Finally, in an appendix, we review some typical properties on the support of $\mu$.
\subsection{ Acknowledgements} We would like to thank P. G\'erard for pointing out to us the bilinear estimates enjoyed by the Hermite functions, which was the starting point of this work (see Lemma~\ref{patrick}).  We are also indebted to Thomas Duyckaerts 
for suggesting us to use the pseudo-conformal transformation in our analysis. 
\section{Hamiltonian formulation and construction of the Gibbs measure}
Set $H=-\partial_x^2 +x^{2}$.
The operator $H$ has a self-adjoint extension on $L^{2}(\R)$ (still denoted
by $H$) and has eigenfunctions 
$\big(h_{n}\big)_{n\geq 0}$ which form a Hilbertian basis of $L^{2}(\R)$ and satisfy
$
Hh_{n}=\lambda_{n}^{2}h_{n}
$
with $\lambda_{n}=\sqrt{2n+1}$. 
Indeed, $h_{n}$ are given by the formula 
\begin{equation}\label{formula}
h_{n}(x)=(-1)^{n}c_{n}\,\e^{x^{2}/2}\frac{\text{d}^{n}}{\text{d}x^{n}}
\big(\,\e^{-x^{2}}\,\big),\;\;\text{with}\;\;\;\frac1{c_{n}}
=\big(n\,!\big)^{\frac12}\,2^{\frac{n}2}\,\pi^{\frac14}.
\end{equation}
The equation \eqref{nlsk} has the following Hamiltonian
\begin{equation}\label{J}
J(u)=\frac{1}{2}\int_{-\infty}^{\infty}|H^{1/2}u(x)|^2\,
\text{d}x+\frac{\kappa_{0}}{k+1}\int_{-\infty}^{\infty}|u(x)|^{k+1}\, \text{d}x.
\end{equation}
Write
$
u=\sum_{n=0}^{\infty}c_n h_{n}.
$
Then in the coordinates $c=(c_n)$ the Hamiltonian reads
$$
J(c)=
\frac{1}{2}\sum_{n=0}^{\infty}\lambda_{n}^{2}|c_n|^2
+\frac{\kappa_{0}}{k+1}\int_{-\infty}^{\infty}\big|\sum_{n=0}^{\infty}c_n h_{n}(x)\big|^{k+1}\, \text{d}x.
$$
Let us define the complex vector space $E_N$ by $E_N={\rm span}(h_0,h_1,\cdots,h_N)$. 
Then we introduce the spectral projector $\Pi_{N}$ on $E_{N}$ by 
\begin{equation*}
\Pi_{N}\big(\sum_{n=0}^{\infty}c_n h_{n}\big)=\sum_{n=0}^{N}c_n h_{n}\,.
\end{equation*}
Let $\chi\in \mathcal{C}_{0}^{\infty}(-1,1)$,  so that $\chi=1$ on $[-\frac12,\frac12]$. Let $S_{N}$ be the operators
\begin{equation}\label{S_NN}
S_{N}\big(\sum_{n=0}^{\infty}c_n h_{n}\big)=\sum_{n=0}^{\infty}\chi\big(\frac{2n+1}{2N+1}\big)c_n h_{n}
=\chi\big(\frac{H}{2N+1}\big)\big(\sum_{n=0}^{\infty}c_n h_{n}\big)\,.
\end{equation}

It is clear that $\|S_{N}\|_{L^{2}\to L^{2}}= \|\Pi_{N}\|_{L^{2}\to L^{2}}=1$ and we have 
\begin{equation}\label{commut}
S_{N}\,\Pi_{N}=\Pi_{N}\,S_{N}=S_{N},\quad \text{and}\quad S_{N}^{*}=S_{N}.
\end{equation}
The interest of introducing the smooth cut-off $S_{N}$ is its better mapping properties on $L^p$, $p\neq 2$, 
compared to $\Pi_N$ (see Proposition~\ref{prop.cont}). 

Let us now turn to the definition of the Gibbs measure.
Write $c_{n}=a_{n}+ib_{n}$. For $N\geq 1$, consider the probability measures on $\R^{2(N+1)}$ defined by
$$
\text{d}\tilde{\mu}_N=\prod_{n=0}^{N}\frac{ 2 \pi} { \lambda_n^2}e^{-\frac{\lambda^{2}_{n}}{2}(a_n^2+b_n^2)}\text{d}a_n\text{d}b_n,
$$
The measure $\tilde{\mu} _N$ defines a measure on $E_N$ via the map
\begin{equation}\label{transport}
(a_n,b_n)_{n=0}^{N}\longmapsto \sum_{n=0}^{N}(a_n+ib_n)h_n,
\end{equation}
which will still be denoted by $\tilde{\mu}_N$. 
Notice that  $\tilde{\mu}_N$ may be seen as the distribution of the $E_N$ valued random
variable
\begin{equation}\label{def.phin}
\omega\longmapsto \sum_{n=0}^{N}{\frac{\sqrt 2}{\lambda_{n}}}g_{n}(\omega)h_n(x)\equiv
\varphi_{N}(\omega,x),
\end{equation}
where $(g_n)_{n=0}^{N}$ is a system of independent, centered, $L^2$ normalized
complex Gaussians on a probability space $({\Omega, \mathcal{F},{\bf p}})$. \par
In order to study convergence properties of $\varphi_N$ as $N\rightarrow\infty$, we define Sobolev spaces 
associated to $H$.
\begin{defi}
For $1\leq p\leq +\infty$ and $s\in \R$, we define the space ${\mathcal W}^{s,p}(\R)$ 
via the norm
$
\|u\|_{{\mathcal W}^{s,p}(\R)}=\|H^{s/2}u\|_{L^p(\R)}.
$
In the case $p=2$ we write $\W^{s,2}(\R)=\H^{s}(\R)$ and if
$
u=\sum_{n=0}^{\infty}c_n h_{n}
$
we have
$
\|u\|^2_{\H^s}=\sum_{n=0}^{\infty}\lambda_{n}^{2s}|c_n|^2.
$
\end{defi}
For future references, we state the following key property of the spaces ${\mathcal W}^{s,p}$, which is actually a consequence of the fact that $H^{-s}$ is a pseudo differential operator in a suitable class (which ensures its $L^p$ boundedness)
\begin{prop}[\protect{\cite{DzGl09}}]
For any $
1<p<\infty, s\geq 0$, there exists $C>0$ such that 
\begin{equation}\label{weyl-hormander}
\frac 1 C \|u\|_{{\mathcal W}^{s,p}(\R)}\leq \|\langle D_x\rangle^s u\|_{L^p(\R)}+\|\langle x\rangle^s u\|_{L^p(\R)}\leq C  \|u\|_{{\mathcal W}^{s,p}(\R)}.
\end{equation}
\end{prop}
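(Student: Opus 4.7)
The plan is to invoke the Weyl-Hörmander pseudodifferential calculus associated with the harmonic oscillator. The full symbol of $H=-\partial_x^2+x^2$ is $a(x,\xi)=x^2+\xi^2$ and it lies in the Hörmander class $S(m,g)$ with weight $m(x,\xi)=1+x^2+\xi^2$ and admissible (slowly varying, temperate, uncertainty-principle compliant) metric $g=(1+x^2+\xi^2)^{-1}(dx^2+d\xi^2)$. By the standard functional calculus for positive self-adjoint elliptic operators in this calculus (as developed by Bony-Chemin, Helffer-Robert, and specialized to the harmonic oscillator in \cite{DzGl09}), $H^{s/2}$ is a Weyl pseudodifferential operator with full symbol in $S(m^{s/2},g)$ whose principal part is $(x^2+\xi^2)^{s/2}$.

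For the upper bound, I would fix a smooth partition of unity $\chi_1(x,\xi)+\chi_2(x,\xi)=1$ on $\mathbb{R}^2$, with $\chi_1$ supported in $\{|x|\leq 2(1+|\xi|)\}$ and $\chi_2$ supported in $\{|\xi|\leq 2(1+|x|)\}$, each of the type that its derivatives are controlled by $g$. Setting $b_1=\chi_1(x^2+\xi^2)^{s/2}\langle\xi\rangle^{-s}$ and $b_2=\chi_2(x^2+\xi^2)^{s/2}\langle x\rangle^{-s}$, both symbols lie in $S(1,g)$. Symbolic calculus then gives
\[
H^{s/2}=B_1\,\langle D_x\rangle^s+B_2\,\langle x\rangle^s+R,
\]
with $B_j\in\operatorname{Op}S(1,g)$ and $R$ a smoothing operator. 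The $L^p$ boundedness for $\operatorname{Op}S(1,g)$ (the Weyl-Hörmander analogue of Calderón-Vaillancourt) yields the right-hand inequality.

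For the lower bound, I would similarly check that the operators $\langle D_x\rangle^s H^{-s/2}$ and $\langle x\rangle^s H^{-s/2}$ have Weyl symbols in $S(1,g)$: the principal parts are $\langle\xi\rangle^s(x^2+\xi^2)^{-s/2}$ and $\langle x\rangle^s(x^2+\xi^2)^{-s/2}$, both bounded by one, and their $g$-derivatives enjoy the required uniform bounds. Writing $\langle D_x\rangle^s u=\bigl(\langle D_x\rangle^s H^{-s/2}\bigr)(H^{s/2}u)$ and likewise for $\langle x\rangle^s u$, the $L^p$ boundedness of $\operatorname{Op}S(1,g)$ again delivers the estimate.

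The main obstacle is genuinely the $L^p$-boundedness ($1<p<\infty$) of Weyl operators with symbols in $S(1,g)$ for this non-isotropic admissible metric. Calderón-Vaillancourt handles $p=2$, but extending to general $p$ requires verifying that the distributional kernel of such an operator is a standard Calderón-Zygmund kernel with respect to the Euclidean metric on $\mathbb{R}$—exploiting the fact that the $g$-balls of unit radius are ellipses of area $\sim 1$ whose projection on $x$-space is of size $(1+x^2+\xi^2)^{1/2}$, so kernel estimates carry through. This is precisely the technical content supplied by \cite{DzGl09}, which we cite rather than reprove.
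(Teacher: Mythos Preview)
The paper does not give a proof of this proposition at all: it is stated with a citation to \cite{DzGl09}, preceded only by the one-line remark that it ``is actually a consequence of the fact that $H^{-s}$ is a pseudo differential operator in a suitable class (which ensures its $L^p$ boundedness).'' Your sketch is precisely an elaboration of that remark and is correct in outline; there is nothing to compare beyond noting that you have supplied the details the authors chose to omit.

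One minor comment: the metric you use, $g=(1+x^2+\xi^2)^{-1}(dx^2+d\xi^2)$, is the natural isotropic choice for the harmonic oscillator, whereas later in the paper (Section~7, around \eqref{metric}) the authors work with the split metric $\frac{dx^2}{1+x^2}+\frac{d\xi^2}{1+\xi^2}$. Either metric suffices here, since the symbols $\langle\xi\rangle^s(x^2+\xi^2)^{-s/2}$ and $\langle x\rangle^s(x^2+\xi^2)^{-s/2}$ lie in $S(1,g)$ for both choices, and the $L^p$-boundedness of zeroth-order operators is available in either framework.
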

Let $\sigma>0$. Then $(\varphi_N)$ is a Cauchy sequence in $L^2(\Omega;{\mathcal
H}^{-\sigma}(\R))$ 
which defines
\begin{equation}\label{def.phi}
\phi(\om,x)=\sum_{n=0}^{\infty}{\frac{\sqrt 2}{\lambda_{n}}}g_{n}(\omega)h_n(x),
\end{equation}
as the limit of $(\varphi_N)$. Indeed, the map
$
\omega\mapsto\varphi(\omega,x)
$
defines a (Gaussian) measure on ${\mathcal H}^{-\sigma}(\R)$ which defines measure $\mu$.
Notice also that the measure $\mu$ can be decomposed into 
$$\mu= \mu^N \otimes \tilde{\mu}_N$$ where $\mu^N$ is the the distribution of the random variable on $E_N^\perp$
$$
\sum_{n=N+1}^{\infty} \frac{ \sqrt 2}{\lambda_n}g_{n}(\omega)h_n(x).
$$

$\bullet$\;{\bf The defocusing case ($\kappa_{0}=1$) and $k\geq 3$}.
In this case we can define the Gibbs measure $\rho$ by
\begin{equation}\label{drho}
\text{d}{\rho}(u)=\exp\big({-\frac{1}{k+1}\|u\|^{k+1}_{L^{k+1}(\R)}}\big)\text{d}\mu(u).
\end{equation}
We also define its finite dimensional approximations 
\begin{equation}\label{tilda}
\begin{aligned}
\text{d}\tilde{\rho}_{N}(u)&=\exp\big({-\frac{1}{k+1}\|S_Nu\|^{k+1}_{L^{k+1}(\R)}}\big)\text{d}\tilde{\mu} _N(u),\\
\text{d}{\rho}_{N}(u)&=\exp\big({-\frac{1}{k+1}\|S_Nu\|^{k+1}_{L^{k+1}(\R)}}\big)\text{d}\mu(u)= \text{d}\mu^N \otimes \text{d}\tilde{\rho}_N.
\end{aligned}
\end{equation}

\noindent $\bullet$\;{\bf The focusing case ($\kappa_{0}=-1$) and $k=3$}.
Let $\zeta :\R\rightarrow\R$, $\zeta\geq 0$ be a continuous function with compact
support (a cut-off).
Define the measures $\tilde{\rho}_{N}, \rho_N$ as
\begin{equation}\label{tilda_pak}
\begin{aligned}
\text{d}\tilde{\rho}_{N}(u)&=\zeta\big(\|\Pi_{N}u\|_{L^2(\R)}^{2}-\alpha_{N}\big)
e^{\frac{1}{4}\int_{\R}|S_Nu(x)|^4\text{d}x}\text{d}\tilde{\mu} _N(u),\\
\text{d}{\rho}_{N}(u)&=\text{d}\mu^N \otimes \text{d}\tilde{\rho}_N.
\end{aligned}
\end{equation}

We have the following statement defining the Gibbs measure associated to the equations
\eqref{nlsk}.
\begin{theo}\label{prop}
(i) Defocusing case ($\kappa_0 = 1$) and $k<+\infty$.  Let the measure $\rho$ be defined by~\eqref{drho}. 

(ii) Focusing case ($\kappa_{0}=-1$) and $k= 3$ : 
The sequence
\begin{equation}\label{cinm}
G_{N}(u)=\zeta\big(\|\Pi_{N}u\|_{L^2(\R)}^{2}-\alpha_{N}\big)
\e^{\frac{1}{4}\int_{\R}|S_Nu(x)|^4\text{d}x},
\end{equation}
converges in measure, as $N\rightarrow\infty$, with respect to the measure $\mu$.
Denote by $G(u)$ the limit of (\ref{cinm}) as $N\rightarrow\infty$.
Then for every $p\in [1,\infty[$,
$G(u)\in L^{p}(\text{d}\mu(u))$ and we define
$
\text{d}\rho(u)\equiv G(u)\text{d}\mu(u)
$.

In both cases,  the sequence $\text{d}\rho_{N}$ converges weakly to $\text{d}\rho$ and for any Borelian set $A \subset \mathcal{H}^{- \sigma}$, we have
\begin{equation}
\label{limite}
 \lim_{n\rightarrow + \infty} \rho_N(A) = \rho(A).
 \end{equation}
\end{theo}
\noindent 
The result in the defocusing case is quite a direct application of the argument of \cite{AyTz}.
The construction of the measure in the focusing case is much more involved and is inspired by
the work \cite{Tzvetkov3} of the third author on the Benjamin-Ono equation. 
The main difficulty in this construction lies in proving that the weight 
$G(u)$ belongs to $L^1( \text{d}\mu)$. A first candidate for the weight $G(u)$ would have
been \smash{$\exp(\|u\|^4_{L^4( \mathbb{R} )}/4)$}, but then the large
deviation estimates are too weak to ensure the integrability of this weight
with respect to the measure $\text{d} \mu$. A second guess would have been $
\zeta (\|u\|_{L^2}) \exp(\|u\|^4_{L^4( \mathbb{R} )}/4)$, as then the
same large deviation estimates and Gagliardo-Nirenberg inequalities would
ensure this integrability. Unfortunately, on the support of the measure $\mu$,
the $L^2$ norm is almost surely infinite and this choice of weight would lead
to a trivial (vanishing) invariant measure. The renormalized (square of the)
$L^2$ norm provides us with an acceptable substitute to this latter
choice. Let us also observe that if we vary $\zeta$ then we get the support of
$\mu$ (see Proposition~\ref{union} below). Notice also that this choice of weight is reminiscent of 
Bourgain's work~\cite{Bourgain2} 
where a similar renormalization is performed at the level of the equation 
itself rather than the level of the Gibbs measure.

It is now a natural question whether the measure $\rho$ constructed in Theorem~\ref{prop} 
is indeed invariant by a well-defined flow of \eqref{nlsk}. It turns out to be the case as shows 
the following statement.
\begin{theo}\label{thm4} 
Assume that $k=3$ in the focusing case and $3\leq k <+\infty$ in the defocusing case. Then the Cauchy 
problem~\eqref{nlsk} is, for  $\mu$-almost every initial data, globally well posed in a strong sense and the Gibbs 
measure $\rho$ constructed in Theorem~\ref{prop} is invariant under this flow, $\Phi(t)$. 
More precisely,
\begin{itemize}
\item There exists a set $\Sigma$ 
of full $\rho$ measure and $s<\frac12$ (for $k=3$, $s<\frac 13$ can be taken arbitrarily close to $\frac 1 3$ while for $k\geq 5$, $s$ can be taken arbitrarily close to $\frac 12$) so that for every $f\in \Sigma$ the
equation \eqref{nlsk} with 
initial condition $u(0)=f$ has a global solution such that
$
u(t,\cdot)-\e^{-itH}f\,\in\,\mathcal{C}\big( \R; \H^{s}(\R) \big).
$
The solution is unique in the following sense : for every $T>0$ there is a functional space $X_T$ continuously 
embedded in $\mathcal{C}\big([-T,T]; \H^{s}(\R) \big)$ such that the solution is unique in the class
$$
u(t,\cdot)-\e^{-itH}f\,\in X_T.
$$
Moreover, for all $\s>0$ and $t\in \R$
\begin{equation*}
\|u(t,\cdot)\|_{\H^{-\s}(\R)}\leq C\bigl( \Lambda(f,\s)+\ln^{\frac12} \big(1+|t|\big)\bigr),
\end{equation*}
and the constant $\Lambda(f,\s)$ satisfies the bound 
$
\mu\big(f :\Lambda(f,\s)>\lambda \big) \leq C\e^{-c\lambda^{2}}.
$
\item For any $\rho$ measurable set $A\subset \Sigma$, for any $t\in \R$, $\rho(A)=\rho(\Phi(t)(A))$.
\end{itemize}
\end{theo}
Let us remark that the uniqueness statement can also be formulated as the fact that the flow of (\ref{nlsk})
on smooth data (for instance $H^1$) can be extended in a unique fashion a.s. on the support of the 
measure $\rho$. Notice also that in this paper, we had to modify the definition of the finite dimensional approximations measures $\rho_N$ with respect to previous results on the subject (see e.g. \cite{Bourgain1}). Indeed, the lack of continuity of the rough projectors $\Pi_N$ on our resolution spaces forbid the usual approximation results (see e.g.~\cite[Theorem 1.2]{Tzvetkov2}). As a consequence, our new measures enjoy better approximation properties (see~\eqref{limite}), but the invariance properties we have to prove are stronger (see Corollary~\ref{cor.inv}). We believe nevertheless this new approach is more natural.
\section{Proof of Theorem~\ref{prop}}\label{section2}~
In this section we prove Theorem~\ref{prop}. As we already mentioned, the main issue is 
the construction of the measure for \eqref{nlsk} with $k=3$ in the focusing case. 
\subsection{Preliminaries and construction of the density}
First we recall the following Gaussian bound (Khinchin inequality), 
which is one of the key points in the study of our random series. See e.g. \cite[Lemma 4.2.]{BT2} 
for a proof in a more general setting. Let us
notice that in our particular setting, the random variable being a Gaussian variable of variance 
$\sum _{n\geq 0}|c_{n}|^{2}$, this estimate is also an easy consequence of the growth of the 
$r$'th moments of centered Gaussians (uniform with respect to the variance).
\begin{lemm}\label{Gaussian}
Let $\big(g_{n}(\om)\big)_{n\geq 0}\in \mathcal{N}_{\C}(0,1)$ be independent, complex, 
$L^{2}$- normalized Gaussian random variables.
Then there exists $C>0$ such that for all $r\geq 2$ and $(c_{n})\in l^{2}(\N)$
\begin{equation*}
\|\sum_{n\geq 0}g_{n}(\om)\,c_{n}\|_{L^{r}(\Omega)}\leq C\sqrt{r}\big(\sum _{n\geq 0}|c_{n}|^{2}\big)^{\frac12}.
\end{equation*}
\end{lemm}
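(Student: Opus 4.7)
The plan is to exploit the fact that, because the $g_n$ are Gaussian, the series $X(\om)=\sum_{n\geq 0}g_n(\om)\,c_n$ is itself a centered complex Gaussian; the $L^r(\Omega)$ bound will then reduce to computing a single Gaussian moment, with the $\sqrt r$ factor arising purely from Stirling's formula. Concretely, the partial sum $X_N(\om)=\sum_{n\leq N}g_n(\om)\,c_n$, being a finite linear combination of independent centered complex Gaussians, is itself a centered complex Gaussian with variance $\s_N^2:=\E|X_N|^2=\sum_{n\leq N}|c_n|^2$, so it has the same law as $\s_N W$ where $W=U+iV$ with $U,V\sim\mathcal{N}(0,1/2)$ independent (so that $\E|W|^2=1$).

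Next I would compute the moments of $W$ explicitly. Since $|W|^2=U^2+V^2$ is exponentially distributed with mean one,
\[
\E|W|^r=\int_0^{\infty}t^{r/2}\,\e^{-t}\,\dd t=\Gamma\!\left(\tfrac{r}{2}+1\right),
\]
and Stirling's formula yields $\Gamma(r/2+1)^{1/r}\leq C\sqrt r$ for a constant $C$ independent of $r\geq 2$. Combining these,
\[
\|X_N\|_{L^r(\Omega)}=\s_N\,\|W\|_{L^r(\Omega)}\leq C\sqrt r\,\bigl(\sum_{n\leq N}|c_n|^2\bigr)^{1/2}\leq C\sqrt r\,\bigl(\sum_{n\geq 0}|c_n|^2\bigr)^{1/2},
\]
uniformly in $N$.

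Finally, I would pass to the limit $N\to\infty$. Since $X_N$ is Cauchy in $L^2(\Omega)$ with limit $X$, a subsequence converges almost surely, so Fatou's lemma applied to $|X_{N_k}|^r$ transfers the uniform bound above to $X$ itself. There is no essential obstacle here — as the authors themselves point out in the paragraph preceding the lemma, the statement is a purely Gaussian fact, and the only quantitative input is the Stirling estimate producing the correct $\sqrt r$ dependence. The only mild care needed is the justification that the sum of infinitely many independent centered Gaussians (converging in $L^2$) remains Gaussian, which is immediate from the convergence of characteristic functions.
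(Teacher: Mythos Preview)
Your argument is correct and follows exactly the route the paper itself indicates: the sum is a centered complex Gaussian of variance $\sum_n|c_n|^2$, and the bound reduces to the growth of Gaussian moments. The paper does not give a detailed proof but explicitly points to this very computation, so your proposal is essentially a fleshed-out version of the authors' remark.
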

\noindent 
We will need the following particular case of the bounds on the eigenfunctions
$(h_{n})$, proved for example by K. Yajima and G. Zhang \cite{YajimaZhang1} (see also H. Koch and D. Tataru~\cite{KochTataru}) .
\begin{lemm}[Dispersive bound for $h_{n}$]\label{disp}
For every $p\geq 4$ there exists $C(p)$ such that for every $n\geq 0$,
$$
\|h_{n}\|_{L^p(\R)}\leq C(p)\lambda_{n}^{-\frac{1}{6}}\,.
$$
\end{lemm}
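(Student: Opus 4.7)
\emph{Proof plan.} The plan is to split the integral defining $\|h_n\|_{L^p}^p$ into three regions determined by the classical turning points of the harmonic oscillator at $x=\pm\lambda_n$, and to apply sharp pointwise bounds in each. Writing $\lambda = \lambda_n$, these regions are: the oscillatory (classically allowed) region $|x|\leq \lambda - c\lambda^{-1/3}$, where I expect the WKB estimate $|h_n(x)|\lesssim (\lambda^2-x^2)^{-1/4}$ to hold uniformly; the Airy transition region $\bigl||x|-\lambda\bigr|\leq c\lambda^{-1/3}$, where I aim to prove $|h_n(x)|\lesssim \lambda^{-1/6}$; and the forbidden region $|x|\geq \lambda + c\lambda^{-1/3}$, where $h_n$ decays exponentially.

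First I dispose of the forbidden region, which contributes $O(\lambda^{-N})$ for every $N$ and is thus negligible. Next, in the Airy region the pointwise bound $\lambda^{-1/6}$ combined with the width $\lambda^{-1/3}$ gives a contribution to $\|h_n\|_{L^p}^p$ of order $\lambda^{-p/6-1/3}$, hence a $p$-th root of $\lambda^{-1/6-1/(3p)}\leq \lambda^{-1/6}$. For the oscillatory region, I would substitute $x=\lambda\sin\theta$ (with upper limit $\theta_\ast$ satisfying $\cos\theta_\ast\sim \lambda^{-2/3}$) to get
\[
\int_{0}^{\lambda-c\lambda^{-1/3}}(\lambda^2-x^2)^{-p/4}\,dx \;=\; \lambda^{1-p/2}\int_{0}^{\theta_\ast}(\cos\theta)^{1-p/2}\,d\theta,
\]
and then evaluate the angular integral directly: $O(\log\lambda)$ for $p=4$ (yielding an $L^4$-contribution of $\lambda^{-1/4}(\log\lambda)^{1/4}$), and $O(\lambda^{(p-4)/3})$ for $p>4$ (yielding $\lambda^{-1/6-1/(3p)}$). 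Both are bounded by $\lambda^{-1/6}$ for every $p\geq 4$. Summing the three contributions gives the claim.

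The hard part will be the uniform Airy bound $|h_n|\lesssim \lambda^{-1/6}$ in the transition region. My plan there is to rescale $x=\lambda+\lambda^{-1/3}y$: the eigenfunction equation $h_n''+(\lambda^2-x^2)h_n=0$ then reduces to an approximate Airy equation $H''=2yH$, so $h_n$ is well approximated by a rescaled Airy function, whose $L^\infty$ norm is bounded. Making this approximation quantitative requires either the classical Plancherel--Rotach uniform asymptotics or Olver's uniform asymptotic method; this is the one technical point I would import from the Yajima--Zhang and Koch--Tataru references cited just above. A conceptually different alternative, which avoids any explicit special-function asymptotics, would be a $TT^\ast$ argument based on the Mehler formula for the kernel of $e^{-itH}$ together with a stationary phase analysis of a smooth spectral projector applied to $h_n$; but this substitutes the Airy analysis with an equivalent dispersive/oscillatory integral computation of comparable depth.
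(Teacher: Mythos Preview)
Your sketch is correct, and in fact you are doing more than the paper does: the paper gives no proof of this lemma at all, merely citing Yajima--Zhang \cite{YajimaZhang1} and Koch--Tataru \cite{KochTataru}. The turning-point decomposition you outline (oscillatory region with the WKB envelope $(\lambda^2-x^2)^{-1/4}$, Airy window of width $\lambda^{-1/3}$ around $|x|=\lambda$ with the uniform bound $|h_n|\lesssim\lambda^{-1/6}$, and exponentially decaying forbidden region) is exactly the mechanism behind the cited results, and your arithmetic on each piece is right: the Airy window saturates the $\lambda^{-1/6}$ rate, the oscillatory region gives the slightly better $\lambda^{-1/6-1/(3p)}$ (with a harmless log at $p=4$), and the tail is negligible. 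You are also right that the only genuinely delicate step is the uniform Airy/Plancherel--Rotach bound in the transition layer, which is precisely what one imports from those references; your alternative $TT^\ast$/Mehler route is viable too and closer in spirit to Koch--Tataru.
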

\noindent As a consequence, we may show the following statement.
\begin{lemm}\label{ld}
Fix $p\in [4,\infty)$ and $s\in [0,1/6)$. Then
\begin{multline}\label{parvo}
\exists C>0,\exists c>0, \forall \lambda\geq 1, \forall N\geq 1,
\\
\mu\big(\,u\in {\mathcal H}^{-\sigma}: \|S_Nu\|_{{\mathcal
W}^{s,p}(\R)}>\lambda\,\big)\leq Ce^{-c\lambda^2}\,.
\end{multline}
Moreover there exists $\beta(s)>0$ such that
\begin{multline}\label{vtoro}
\exists\, C>0,\exists\, c>0,\,\, \forall \lambda\geq 1,\, \forall N\geq N_0\geq 1,
\\
\mu\big(\,u\in {\mathcal H}^{-\sigma}\,:\, \|S_Nu-S_{N_0}u\|_{{\mathcal
W}^{s,p}(\R)}>\lambda\,\big)\leq 
Ce^{-cN_0^{\beta(s)}\lambda^2}\,.
\end{multline}
\end{lemm}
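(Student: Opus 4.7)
The plan is to reduce both bounds to Chebyshev's inequality on a high moment $\E \|H^{s/2} S_N \phi\|_{L^p_x}^r$, following the classical Khinchin--Minkowski scheme. By definition of $\W^{s,p}$ and of $S_N$, we have
\[
H^{s/2} S_N \phi(\omega,x) = \sum_{n\geq 0} \chi\bigl(\tfrac{2n+1}{2N+1}\bigr)\,\sqrt 2\,\lambda_n^{s-1}\,g_n(\omega)\,h_n(x),
\]
which for each fixed $x$ is a sum of independent centered Gaussians. Khinchin (Lemma~\ref{Gaussian}) gives, for every $r\geq 2$,
\[
\bigl\|H^{s/2} S_N \phi(\cdot,x)\bigr\|_{L^r(\Omega)} \leq C\sqrt r\,\Bigl(\sum_{n\geq 0}\chi\bigl(\tfrac{2n+1}{2N+1}\bigr)^2\,2\lambda_n^{2s-2}\,|h_n(x)|^2\Bigr)^{1/2}.
\]

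The second step is Minkowski's integral inequality: for $r\geq p$, $\|F\|_{L^r_\omega L^p_x}\leq \|F\|_{L^p_x L^r_\omega}$. Combined with the triangle inequality in $L^{p/2}_x$ (valid because $p\geq 4$) and Lemma~\ref{disp}, this yields
\[
\bigl\|H^{s/2} S_N\phi\bigr\|_{L^r_\omega L^p_x}^2 \leq C r \sum_{n\geq 0}\chi\bigl(\tfrac{2n+1}{2N+1}\bigr)^2\,\lambda_n^{2s-2}\,\|h_n\|_{L^p}^2 \leq C r \sum_{n\geq 0}\chi\bigl(\tfrac{2n+1}{2N+1}\bigr)^2\lambda_n^{2s-7/3}.
\]
Since $\lambda_n^2=2n+1$, the series $\sum_n \lambda_n^{2s-7/3}$ converges precisely when $s<1/6$ (this is the arithmetic origin of the hypothesis $s<1/6$), and is bounded uniformly in $N$. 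Hence $\bigl(\E\|S_N\phi\|_{\W^{s,p}}^r\bigr)^{1/r}\leq C\sqrt r$, and Chebyshev's inequality gives
\[
\mu\bigl(\|S_N\phi\|_{\W^{s,p}}>\lambda\bigr)\leq \bigl(C\sqrt r/\lambda\bigr)^r.
\]
Optimizing by choosing $r\sim \lambda^2/(Ce)^2$ produces the bound $Ce^{-c\lambda^2}$ claimed in \eqref{parvo}.

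For \eqref{vtoro} the exact same argument applies to $S_N\phi-S_{N_0}\phi$, whose Hermite coefficients carry the multiplier $\chi\bigl(\tfrac{2n+1}{2N+1}\bigr)-\chi\bigl(\tfrac{2n+1}{2N_0+1}\bigr)$. Because $\chi\equiv 1$ on $[-1/2,1/2]$, this multiplier vanishes for $2n+1\leq N_0$, and is bounded by $2\|\chi\|_\infty$. The moment bound therefore becomes
\[
\bigl\|H^{s/2}(S_N\phi-S_{N_0}\phi)\bigr\|_{L^r_\omega L^p_x}^2 \leq C r \sum_{n\gtrsim N_0}\lambda_n^{2s-7/3} \leq C r\, N_0^{s-1/6},
\]
the last inequality being a tail estimate for the convergent series above. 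Setting $\beta(s):=1/6-s>0$, Chebyshev and the optimization $r\sim \lambda^2 N_0^{\beta(s)}$ deliver the exponential factor $e^{-c N_0^{\beta(s)}\lambda^2}$.

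The only delicate point is the interplay between the three exponents: the decay $\lambda_n^{-1/6}$ coming from Lemma~\ref{disp}, the loss $\lambda_n^{s}$ from the derivative, and the gain $\lambda_n^{-1}$ from the normalization of $\phi$. The matching condition $s<1/6$ is genuinely sharp for this strategy, and the quantitative tail estimate on $\sum_{n\geq N_0}\lambda_n^{2s-7/3}$ is what converts the (parvo) estimate into the stronger (vtoro) estimate with an explicit gain in $N_0$; everything else is standard Gaussian machinery.
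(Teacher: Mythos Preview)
Your proof is correct and follows essentially the same approach as the paper's: reduce to the random series, apply Khinchin pointwise, swap $L^r_\omega$ and $L^p_x$ via Minkowski, sum using the dispersive bound $\|h_n\|_{L^p}\leq C\lambda_n^{-1/6}$, then feed the resulting moment bound into Chebyshev with $r\sim c\lambda^2$. Your explicit identification $\beta(s)=1/6-s$ from the tail of $\sum_n\lambda_n^{2s-7/3}$ is slightly more precise than the paper, which just records the existence of some $\gamma(s)>0$.
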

\begin{proof}
We have that
\begin{multline*}
\mu\big(\,u\in {\mathcal H}^{-\sigma}\,:\, \|S_Nu\|_{{\mathcal W}^{s,p}(\R)}>\lambda\,\big)\\
\begin{aligned}
&=
{\bf p}\big(\,\omega\,:\,
\|\sum_{n=0}^{\infty}{\chi\bigl(\frac{2n+1}{2N+1}\bigr)\frac{\sqrt 2}{\lambda_{n}}}g_{n}(\omega)h_n(x)\|_{{\mathcal
W}^{s,p}(\R)}>\lambda\,\big)
\\
&=
{\bf p}\big(\,\omega\,:\,
\|\sum_{n=0}^{\infty}\chi\bigl(\frac{2n+1}{2N+1}\bigr)
{\frac{\sqrt 2}{\lambda^{1-s}_{n}}}g_{n}(\omega)h_n(x)\|_{L^{p}(\R)}>\lambda\,\big).
\end{aligned}
\end{multline*}
Set 
$$
f(\omega,x)\equiv\sum_{n=0}^{\infty}
\chi\bigl(\frac{2n+1}{2N+1}\bigr){\frac{\sqrt 2}{\lambda^{1-s}_{n}}}g_{n}(\omega)h_n(x)\,.
$$
Then for $q\geq p$, using the Minkowski inequality, we get
$$
\|f(\omega,x)\|_{L^q_{\omega}L^p_x}\leq \|f(\omega,x)\|_{L^p_xL^q_\omega}\,.
$$
By Lemma \ref{Gaussian} we get
\begin{eqnarray*}
\|f(\omega,x)\|_{L^q_\omega}&\leq& C\sqrt{q}\big(\sum_{n=0}^{\infty}\chi^{2}_{0}
\bigl(\frac{2n+1}{2N+1}\bigr){\frac{ 2}{\lambda^{2(1-s)}_{n}}}|h_{n}(x)|^2\big)^{1/2}\\
&\leq &C\sqrt{q}\big(\sum_{n=0}^{\infty}{\frac{ 2}{\lambda^{2(1-s)}_{n}}}|h_{n}(x)|^2\big)^{1/2}\,.
\end{eqnarray*}
Since $s<1/6$, using Lemma~\ref{disp} and the triangle inequality, we get
$$
\|f(\omega,x)\|_{L^q_{\omega}L^p_x}\leq C\sqrt{q}\,.
$$
Using Bienaym\'e-Tchebichev inequality, we obtain
$$
{\bf p}\big(\,\omega\,:\, \|f(\omega,x)\|_{L^p_x}>\lambda\,\big)\leq 
(\lambda^{-1}\|f(\omega,x)\|_{L^q_{\omega}L^p_x})^q
\leq
(C\lambda^{-1}\sqrt{q})^{q}\,.
$$
Thus by choosing $q=\delta \lambda^2$, for $\delta$ small enough, we get the bound 
$$
{\bf p}\big(\,\omega\,:\, \|f(\omega,x)\|_{L^p_x}>\lambda\,\big)\leq Ce^{-c\lambda^2}\,.
$$
This in turn yields (\ref{parvo}). The proof of (\ref{vtoro}) is very similar.
Indeed, in this case, we analyze the function
$$
f_{N_0}(\omega,x)\equiv\sum_{n=0}^{\infty} \big( \chi(\frac{2n+1}{2N+1})-
\chi(\frac{2n+1}{2N_{0}+1}) \big) {\frac{ 2}{\lambda^{2(1-s)}_{n}}}g_{n}(\omega)h_n(x),
$$
and we use that there is a negative power of $N_0$ saving in the estimate. 
Namely, there is $\gamma(s)>0$ such that
$$
\|f_{N_0}(\omega,x)\|_{L^q_{\omega}L^p_x}\leq C\sqrt{q}N_0^{-\gamma(s)}\,,
$$
which implies (\ref{vtoro}). This completes the proof of Lemma~\ref{ld}.
\end{proof}

\noindent With the same arguments one can prove the following statement.
\begin{lemm}\label{lem.3.4}
Let $\s>0$, then 
\begin{multline}\label{normehs}
\exists C>0,\exists c>0, \forall \lambda\geq 1,
\mu\big(\,u\in {\mathcal H}^{-\sigma} : \|u\|_{{\mathcal
H}^{-\s}(\R)}>\lambda\,\big)\leq Ce^{-c\lambda^2}\,.
\end{multline}
\end{lemm}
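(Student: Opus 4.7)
The plan is to mimic the proof of Lemma~\ref{ld}, but with $L^2$ replacing $L^p$, which simplifies matters considerably because the Hermite functions are already $L^2$-normalized and no dispersive bound on $h_n$ is needed. The crucial point is that for $\sigma>0$, the weights $\lambda_n^{-2-2\sigma}\sim (2n+1)^{-1-\sigma}$ are summable, and this is the only place where the positivity of $\sigma$ enters.

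Concretely, since $\|u\|_{\H^{-\sigma}}=\|H^{-\sigma/2}u\|_{L^2}$, I would work with the random series representation of $\mu$ and consider
\begin{equation*}
f(\omega,x)\equiv H^{-\sigma/2}\phi(\omega,x)=\sum_{n=0}^\infty \frac{\sqrt 2}{\lambda_n^{1+\sigma}}\,g_n(\omega)\,h_n(x).
\end{equation*}
For $q\geq 2$, Minkowski's inequality gives
$\|f\|_{L^q_\omega L^2_x}\leq \|f\|_{L^2_x L^q_\omega}$,
and then Lemma~\ref{Gaussian} applied pointwise in $x$ yields
\begin{equation*}
\|f(\omega,x)\|_{L^q_\omega}\leq C\sqrt q\,\Bigl(\sum_{n=0}^\infty \frac{2}{\lambda_n^{2+2\sigma}}|h_n(x)|^2\Bigr)^{1/2}.
\end{equation*}
Taking the $L^2_x$ norm and using that $\|h_n\|_{L^2}=1$ together with the summability of $\lambda_n^{-2-2\sigma}$ (which requires $\sigma>0$), I conclude
\begin{equation*}
\|f\|_{L^q_\omega L^2_x}\leq C\sqrt q\,\Bigl(\sum_{n=0}^\infty \frac{2}{\lambda_n^{2+2\sigma}}\Bigr)^{1/2}\leq C\sqrt q.
\end{equation*}

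Finally, Bienaymé--Tchebichev gives
$\mathbf{p}(\|f\|_{L^2_x}>\lambda)\leq (C\sqrt q/\lambda)^q$,
and optimizing by choosing $q=\delta\lambda^2$ with $\delta>0$ sufficiently small produces the Gaussian tail bound $Ce^{-c\lambda^2}$, which is precisely~\eqref{normehs}. There is no serious obstacle here: the only subtlety is the legitimate use of Fubini/Minkowski to exchange the $\omega$- and $x$-integrations, which is justified by the a priori $L^2(\Omega;\H^{-\sigma})$ convergence of $\varphi_N\to\phi$ already recorded in the excerpt.
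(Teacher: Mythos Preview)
Your proof is correct and takes essentially the same approach as the paper: the paper does not spell out a proof but simply writes ``With the same arguments one can prove the following statement,'' referring to the argument of Lemma~\ref{ld}, and your write-up is precisely that argument specialized to $p=2$ (where the dispersive bound on $h_n$ is replaced by the trivial $\|h_n\|_{L^2}=1$ and the summability condition becomes $\sum_n\lambda_n^{-2-2\sigma}<\infty$).
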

\begin{lemm}[Gagliardo-Nirenberg inequality associated to $H$]\label{lem.GN}
For any $s\in(0,1/6)$, there exists $p<\infty$ and $\theta<2$ such that
$$
\|u\|_{L^4(\R)}^4\leq C \|u\|_{L^2(\R)}^{4-\theta}\|u\|^{\theta}_{\W^{s,p}(\R)}\,.
$$
\end{lemm}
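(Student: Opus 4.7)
The plan is to reduce the problem to a Gagliardo--Nirenberg style bound for $L^\infty(\R)$, combined with the elementary H\"older estimate
$$\|u\|_{L^4(\R)}^4 \leq \|u\|_{L^2(\R)}^2 \|u\|_{L^\infty(\R)}^2.$$
The strict inequality $\theta<2$ will come from a genuine (non-endpoint) interpolation at the $L^\infty$ step.

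Given $s \in (0,1/6)$, I would first fix $p < \infty$ large enough that $sp>1$; any $p > 1/s$ works. The one-dimensional fractional Gagliardo--Nirenberg inequality, interpolating $L^\infty(\R)$ between $L^2(\R)$ and $W^{s,p}(\R)$, then yields
$$\|u\|_{L^\infty(\R)} \leq C \|u\|_{L^2(\R)}^{1-\beta} \|u\|_{W^{s,p}(\R)}^{\beta},$$
where $\beta \in (0,1)$ is forced by the scaling identity $0 = \tfrac{1-\beta}{2} + \beta\bigl(\tfrac{1}{p}-s\bigr)$, i.e., $\beta = \bigl(1+2s-2/p\bigr)^{-1}$. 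The condition $sp>1$ is precisely what makes the denominator strictly exceed $1$, and hence forces $\beta<1$. Since $\W^{s,p}$ controls the standard Sobolev norm, the equivalence \eqref{weyl-hormander} gives $\|u\|_{W^{s,p}(\R)} \leq C\|u\|_{\W^{s,p}(\R)}$.

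Combining these three inequalities would deliver
$$\|u\|_{L^4(\R)}^4 \leq C \|u\|_{L^2(\R)}^{4-2\beta} \|u\|_{\W^{s,p}(\R)}^{2\beta},$$
which is the stated estimate with $\theta = 2\beta < 2$.

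The only mildly delicate point---though not really an obstacle---is choosing $p$ strictly above $1/s$ so as to stay off the endpoint embedding $W^{1/p,p}(\R) \hookrightarrow L^\infty(\R)$, which is what buys room for a genuine (non-trivial) Gagliardo--Nirenberg and thereby for $\beta$ strictly below $1$. The upper bound $s<1/6$ is not used here; it is inherited from the probabilistic framework of Lemma~\ref{ld}, where $1/6$ is the decay exponent in Lemma~\ref{disp}.
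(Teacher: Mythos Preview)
Your proof is correct and follows essentially the same strategy as the paper: both start from $\|u\|_{L^4}^4\leq \|u\|_{L^2}^2\|u\|_{L^\infty}^2$, interpolate $L^\infty$ between $L^2$ and $W^{s,p}$ to extract an extra power of $\|u\|_{L^2}$, and then pass from $W^{s,p}$ to $\W^{s,p}$ via \eqref{weyl-hormander}. Your route is slightly more direct, using a single fractional Gagliardo--Nirenberg step $\|u\|_{L^\infty}\leq C\|u\|_{L^2}^{1-\beta}\|u\|_{W^{s,p}}^{\beta}$, whereas the paper obtains the same conclusion in two stages (first $\|u\|_{L^\infty}\leq C\|u\|_{L^p}^{\kappa}\|u\|_{W^{s,p}}^{1-\kappa}$ via \cite[Proposition~A.3]{Tao}, then H\"older to trade $L^p$ for $L^2$).
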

\begin{proof}
First we prove that for any $s\in(0,1/6)$, there exists $p<\infty$ and $\theta<2$ such that
\begin{equation}\label{gagliardo}
\|u\|_{L^4(\R)}^4\leq C \|u\|_{L^2(\R)}^{4-\theta}\|u\|^{\theta}_{W^{s,p}(\R)}\,,
\end{equation}
where $W^{s,p}$ is the usual Sobolev space.

Fix $s\in (0, 1/6)$ and write 
\begin{equation}\label{i}
\|u\|_{L^4(\R)}^4\leq C \|u\|_{L^2(\R)}^{2}\|u\|^{2}_{L^\infty(\R)}\,.
\end{equation}
Using \cite[Proposition~A.3]{Tao}, we get that there exists $p\gg 1$ and $\kappa>0$ such that
\begin{equation}\label{ii}
\|u\|_{L^\infty(\R)}\leq C\|u\|_{L^p(\R)}^{\kappa}\|u\|^{1-\kappa}_{W^{s,p}(\R)},
\end{equation}
(indeed for large $p$ the derivative loss tends to zero, i.e. we may assume
that it is smaller than $s$).
Finally the H\"older inequality and (\ref{ii}) implies that for any $q>p$ there exists $\alpha>0$
such that
\begin{equation}\label{iii}
\|u\|_{L^p(\R)}\leq C\|u\|_{L^2(\R)}^\alpha \|u\|_{L^q(\R)}^{1-\alpha}
\leq C\|u\|_{L^2(\R)}^\alpha \|u\|^{1-\alpha}_{W^{s,p}(\R)}\,.
\end{equation}
A combination of (\ref{i}), (\ref{ii}) and (\ref{iii}) yields \eqref{gagliardo}.

Finally, to complete the proof of the lemma use that thanks to (\ref{weyl-hormander})
$$
\|u\|_{W^{s,p}(\R)}\leq C\|u\|_{{\mathcal W}^{s,p}(\R)}\,.
$$
This completes the proof of Lemma~\ref{lem.GN}.
\end{proof}
Denote by 
\begin{equation*}
F_{N}(u)=\|\Pi_{N}u\|_{L^2(\R)}^{2}-\alpha_{N}.
\end{equation*}
As in \cite{Tzvetkov3}, we need the following convergence properties of the sequence $\big(F_{N}(u)\big)_{N\geq 0}$.

\begin{lemm}\label{lem.cauchy}
The sequence $(F_{N}(u))$ is a Cauchy sequence in  $L^{2}(\H^{-\s}(\R),\text{d}\mu)$. As a consequence, if we denote by $F(u)$ its limit, the sequence $\big(F_{N}(u)\big)_{N\geq 0}$ converges to $F(u)$ in measure :
\begin{equation*}
\forall\,\eps>0,\;\; \lim_{N\to\infty}\mu\big(\,u\in {\mathcal H}^{-\sigma}\,:\,
\big|F_{N}(u)-F(u)\big|>\eps\,\big)=0.
\end{equation*}
\end{lemm}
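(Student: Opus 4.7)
The natural starting point is to identify $\alpha_N$. Under $\mu$, the $L^2$ pairing $\|\Pi_N u\|_{L^2}^2$ equals $\sum_{n=0}^N \frac{2}{\lambda_n^2}|g_n(\omega)|^2$, whose expectation is $\sum_{n=0}^N \frac{2}{\lambda_n^2}$. Since $\lambda_n^2 = 2n+1$, this diverges logarithmically, so $F_N$ cannot converge without renormalization. The only sensible choice is the Wick constant
\begin{equation*}
\alpha_N = \sum_{n=0}^N \frac{2}{\lambda_n^2} = \mathbb{E}_\mu\big[\|\Pi_N u\|_{L^2}^2\big],
\end{equation*}
and with this choice one has, $\mu$-almost surely,
\begin{equation*}
F_N(u) = \sum_{n=0}^N \frac{2}{\lambda_n^2}\bigl(|g_n(\omega)|^2 - 1\bigr).
\end{equation*}

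The plan is then to prove $L^2(d\mu)$ Cauchyness by direct computation. The summands $\frac{2}{\lambda_n^2}(|g_n|^2-1)$ are independent, centered, and pairwise orthogonal in $L^2(\Omega,d\mathbf{p})$. Hence for $N > M$,
\begin{equation*}
\|F_N - F_M\|_{L^2(d\mu)}^2 = \sum_{n=M+1}^N \frac{4}{\lambda_n^4}\,\mathbb{E}\bigl[(|g_n|^2-1)^2\bigr].
\end{equation*}
Since each $|g_n|^2$ is a standard exponential random variable, the variance factor is a universal constant, and the tail is bounded by $C\sum_{n>M} n^{-2}$, which tends to $0$ as $M\to\infty$. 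This shows $(F_N)$ is Cauchy in $L^2(\mathcal{H}^{-\sigma},d\mu)$; denote the limit by $F(u)$.

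Convergence in measure is then immediate from Bienaymé--Tchebichev: for every $\varepsilon>0$,
\begin{equation*}
\mu\bigl(u \in \mathcal{H}^{-\sigma} : |F_N(u) - F(u)| > \varepsilon\bigr) \leq \frac{1}{\varepsilon^2}\,\|F_N - F\|_{L^2(d\mu)}^2 \longrightarrow 0.
\end{equation*}
There is no real obstacle here: the whole argument rests on correctly spotting that $\alpha_N$ is forced to be the expectation (and hence the second Wiener chaos structure trivializes the $L^2$ computation via independence). The slightly subtle point, worth flagging, is that while $\|\Pi_N u\|_{L^2}^2 \to +\infty$ $\mu$-a.s., the renormalized quantity $F_N$ converges both in $L^2(d\mu)$ and in measure, and this is exactly what makes the focusing weight $\zeta(F_N)\exp(\tfrac14\|S_Nu\|_{L^4}^4)$ a meaningful candidate on the support of $\mu$.
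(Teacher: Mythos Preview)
Your proof is correct and follows essentially the same approach as the paper: both identify $F_N(u)=\sum_{n=0}^N \frac{2}{\lambda_n^2}(|g_n|^2-1)$, use independence to reduce the $L^2$ computation to $\sum_{n=M+1}^N \frac{c}{\lambda_n^4}\leq \frac{C}{M+1}$, and then invoke Tchebychev for convergence in measure. The only cosmetic difference is that you explicitly justify the choice of $\alpha_N$ as the expectation, whereas the paper treats this as already fixed.
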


\begin{proof}
Let $N>M\geq 0$, then 
\begin{multline*}
\|F_{N}(u)-F_{M}(u)\|^{2}_{L^{2}(\H^{-\s}(\R),\text{d}\mu)}=\\
\begin{aligned}
&=\int_{\Omega}\big| \big( \|\phi_{N}\|_{L^2(\R)}^{2}-\alpha_{N}\big)-\big(\|\phi_{M}\|_{L^2(\R)}^{2}-\alpha_{M} \big) \big|^{2}\text{d}{\bf p}(\om),
\end{aligned}
\end{multline*}
where $\phi_{N}$ is defined in \eqref{def.phin}. By definition of $\alpha_{N}$, we have 
\begin{equation}\label{pphi}
\|\phi_{N}\|_{L^2(\R)}^{2}-\alpha_{N}=\sum_{n=0}^{N}\frac{2}{\lambda_{n}^{2}}(|g_{n}(\om)|^{2}-1),
\end{equation}
and therefore
\begin{equation}\label{eq.fnfm}
\|F_{N}(u)-F_{M}(u)\|^{2}_{L^{2}(\H^{-\s}(\R),\text{d}\mu)}=
\int_{\Omega}\big|\sum_{n=M+1}^{N} 
\frac{2}{\lambda_{n}^{2}}(|g_{n}(\om)|^{2}-1) \big|^{2}\text{d}{\bf p}(\om).
\end{equation}
Now, as the random variables $\big(g_{n}(\om)\big)_{n\geq 0}$ are normalized and independent, 
for all $n_{1}\neq n_{2}$ we have 
\begin{equation*}
\int_{\Omega}\big(|g_{n_{1}}(\om)|^{2}-1\big)\big(|g_{n_{2}}(\om)|^{2}-1\big)\text{d}{\bf p}(\om)=0,
\end{equation*}
therefore from \eqref{eq.fnfm} we deduce 
\begin{equation*}
\|F_{N}(u)-F_{M}(u)\|^{2}_{L^{2}(\H^{-\s}(\R),\text{d}\mu)}=c\sum_{n=M+1}^{N} \frac{1}{\lambda_{n}^{4}}\leq \frac{C}{M+1},
\end{equation*}
as $\lambda_{n}^{2}=2n+1$. This proves the first assertion of the lemma.

By the Tchebychev inequality, $L^{2}$ convergence implies convergence in measure, hence the result. 
This completes the proof of Lemma~\ref{lem.cauchy}.
\end{proof}

\noindent The following result is a large deviation bound for the sequence $\big(F_{N}(u)\big)$.
\begin{lemm}\label{lem.BN}
There exist $C,c>0$ so that for all $N>M\geq 0$ and $\lambda>0$
\begin{equation}
\mu\big(\,u\in {\mathcal H}^{-\sigma}\,:\,
\big|F_{N}(u)-F_{M}(u)\big|>\lambda\, \big)\leq C\e^{-c(M+1)^{\frac12}\lambda}.
\end{equation}
\end{lemm}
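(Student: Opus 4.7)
The plan is to apply the standard exponential Chernoff bound, exploiting that $F_N(u)-F_M(u)$ is a weighted sum of independent, centered, sub-exponential random variables. From \eqref{pphi},
\begin{equation*}
F_N(u)-F_M(u)=\sum_{n=M+1}^{N}\frac{2}{\lambda_{n}^{2}}\bigl(|g_n(\om)|^2-1\bigr),
\end{equation*}
and each $|g_n|^2$, being the squared modulus of an $L^2$-normalized complex Gaussian, is exponentially distributed with parameter $1$. Its centered Laplace transform then satisfies, for $|s|\leq 1/2$,
\begin{equation*}
\log\E\bigl[\,\e^{s(|g_n|^2-1)}\,\bigr]=-s-\log(1-s)\leq 2s^{2},
\end{equation*}
as follows at once from the Taylor series $-s-\log(1-s)=\sum_{k\geq 2}s^{k}/k$.

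By independence of the $g_n$'s, for every $t\in\R$ with $|t|\leq \lambda_{M+1}^{2}/4$ (which guarantees $|2t/\lambda_{n}^{2}|\leq 1/2$ for every $n\geq M+1$),
\begin{equation*}
\log\E\bigl[\,\e^{t(F_N(u)-F_M(u))}\,\bigr]\leq \sum_{n=M+1}^{N}\frac{8\,t^{2}}{\lambda_{n}^{4}}\leq \frac{C\,t^{2}}{M+1},
\end{equation*}
since $\lambda_n^{2}=2n+1$ and $\sum_{n>M}(2n+1)^{-2}\leq C/(M+1)$. Markov's exponential inequality, applied to the two tails, then gives for every admissible $t>0$
\begin{equation*}
\mu\bigl(u:|F_N(u)-F_M(u)|>\lambda\bigr)\leq 2\exp\bigl(-t\lambda+C t^{2}/(M+1)\bigr).
\end{equation*}

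It only remains to optimize in $t$, distinguishing two regimes. When $\lambda\leq c_{0}$ for a suitable constant $c_{0}$, the unconstrained minimizer $t^{\ast}=(M+1)\lambda/(2C)$ respects the admissibility constraint $t^{\ast}\leq \lambda_{M+1}^{2}/4\sim (M+1)/2$, and produces the Gaussian bound $\exp(-c(M+1)\lambda^{2})$. When $\lambda>c_{0}$, the minimizer exceeds the admissible range, so one sets $t$ equal to a small fixed multiple of $M+1$ and obtains the linear bound $\exp(-c(M+1)\lambda)$. Both majorize the claimed $\exp(-c\sqrt{M+1}\lambda)$ as soon as $\sqrt{M+1}\lambda\geq 1$, and in the complementary regime the desired inequality is trivially true after enlarging $C$. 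The only delicate point is the admissibility constraint on $t$, dictated by the radius of convergence of the Taylor expansion of $-\log(1-s)$; it is precisely this constraint that forces the bound to become linear in $\lambda$ for large $\lambda$, and thus accounts for the stated $\sqrt{M+1}\,\lambda$ exponent, which is the weakest envelope consistent with both regimes.
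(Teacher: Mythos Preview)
Your proof is correct and follows essentially the same approach as the paper: both use the exponential Chernoff/Markov inequality, compute the moment generating function of the sum of independent centered exponentials, bound $\log\E[e^{s(|g_n|^2-1)}]$ by a multiple of $s^2$ on the admissible range, and arrive at the same inequality $\exp(-t\lambda+Ct^2/(M+1))$. The only cosmetic difference is that the paper directly takes $t=c(M+1)^{1/2}$, whereas you optimize over $t$ in two regimes before observing that both dominate the stated bound; your route in fact yields the slightly sharper Bernstein-type estimate $\exp\bigl(-c\min((M+1)\lambda^2,(M+1)\lambda)\bigr)$, from which the paper's exponent $\sqrt{M+1}\,\lambda$ is the common envelope.
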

\begin{proof}
The result can be viewed as a consequence of a smoothing property of a suitable heat flow, 
but we give here a direct proof. Define the set 
\begin{equation*}
B_{M,N}=\big\{u\in {\mathcal H}^{-\sigma}\,:\,
\big|F_{N}(u)-F_{M}(u)\big|>\lambda\big\}.
\end{equation*}
Then by \eqref{pphi} for $N>M$,
\begin{eqnarray}\label{BN}
\mu(B_{M,N})&=&{\bf p}\big(\,\om \,:\,
\big|\big(\,\|\phi_{N}\|_{L^2(\R)}^{2}-\alpha_{N}\big)-\big(\|\phi_{M}\|_{L^2(\R)}^{2}-\alpha_{M}\big)\big|>\lambda\,\big)\nonumber \\
&=&{\bf p}\big(\,\om \,:\,
\big| \sum_{n=M+1}^{N}\frac{2}{\lambda_{n}^{2}}(|g_{n}(\om)|^{2}-1) \big|>\lambda\,\big).
\end{eqnarray}
By the Tchebychev inequality, for all $ 0\leq t\leq \frac{\lambda^{2}_{M+1}}4$,
\begin{multline}\label{eq.diff}
{\bf p}\big(\om \,:\,
\sum_{n=M+1}^{N}\frac{2}{\lambda_{n}^{2}}(|g_{n}(\om)|^{2}-1) >\lambda\big)\leq \\
\begin{aligned}
& \leq \e^{-\lambda t}\,\mathbb{E}\big[\exp\big({t \sum_{n=M+1}^{N}\frac{2}{\lambda_{n}^{2}}(|g_{n}(\om)|^{2}-1)}\big)\big]\\
&=\e^{-\lambda t}\prod_{n=M+1}^{N}\int_{\Omega}\e^{\frac{2t}{\lambda_{n}^{2}}(|g_{n}(\om)|^{2}-1)}\text{d}{\bf p}(\om)\\
&=\e^{-\lambda t}\prod_{n=M+1}^{N}\e^{-\frac{2t}{\lambda_{n}^{2}}}\big(1-\frac{2t}{\lambda_{n}^{2}}\big)^{-1}.
\end{aligned}
\end{multline}
Now observe that for all $0\leq x\leq \frac12$, $(1-x)^{-1}\leq \e^{x+x^{2}}$, hence \eqref{eq.diff} gives
\begin{multline*}
{\bf p}\big(\om \,:\,
\sum_{n=M+1}^{N}\frac{2}{\lambda_{n}^{2}}(|g_{n}(\om)|^{2}-1) >\lambda\big)\leq \\
\begin{aligned}
& \leq \e^{-\lambda t}\exp\big(4t^{2} \sum_{n=M+1}^{\infty} \frac1{\lambda_{n}^{4}} \big)\leq
\exp\big(-\lambda t+\frac{C t^{2}}{M+1} \big),
\end{aligned}
\end{multline*}
as $\lambda_{n}^{2}=2n+1$.
Choose $t=c (M+1)^{\frac12}$, with $c>0$ small enough and deduce
\begin{equation*}
{\bf p}\big(\om \,:\,
\sum_{n=M+1}^{N}\frac{2}{\lambda_{n}^{2}}(|g_{n}(\om)|^{2}-1) >\lambda\big)\leq C\e^{-c(M+1)^{\frac12} \lambda}.
\end{equation*}
Using a slight modification of the previous argument, we can show that 
\begin{equation*}
{\bf p}\big(\om \,:\,
\sum_{n=M+1}^{N}\frac{2}{\lambda_{n}^{2}}(|g_{n}(\om)|^{2}-1) <-\lambda\big)\leq C\e^{-c(M+1)^{\frac12}\lambda},
\end{equation*}
and the result follows, by \eqref{BN}. This completes the proof of Lemma~\ref{lem.BN}.
\end{proof}
We are now able to define the density $G\,:\,\H^{-\s}(\R)\longrightarrow \R$ (with respect to the measure $\mu$) of the measure $\rho$. By Lemmas \ref{lem.cauchy} and \ref{ld}, we have the following convergences in the $\mu$ measure : $F_{N}(u)$ converges in to $F(u)$ and $\|S_Nu\|_{L^{4}(\R)}$ to $\|u\|_{L^{4}(\R)}$. Then, by composition and multiplication of continuous functions, we obtain 
\begin{equation}\label{def.G}
G_{N}(u)
\longrightarrow \zeta\big(F(u)\big)
e^{\frac{1}{4}\int_{\R}|u(x)|^4\text{d}x}\equiv G(u),
\end{equation}
in measure, with respect to the measure $\mu$. As a consequence, $G$ is measurable from $\big(\H^{-\s}(\R), \mathcal{B}\big)$ to $\R$.
\subsection{Integrability of $G_{N}(u)$}
\noindent We now have all the ingredients to prove the following proposition,
which is the key point in the proof of Theorem \ref{prop}.
\begin{prop}\label{prop.lp}
Let $1\leq p<\infty$. Then there exists $C>0$ such that for every $N\geq 1$,
$$
\big\|
\zeta\big(\|\Pi_{N}u\|_{L^2(\R)}^{2}-\alpha_{N}\big)
e^{\frac{1}{4}\int_{\R}|S_Nu(x)|^4\text{d}x}
\big\|_{L^p(\text{d}\mu(u))}\leq C\,.
$$
\end{prop}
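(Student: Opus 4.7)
Since $\zeta \geq 0$, the inequality $\zeta^p \leq \|\zeta\|_\infty^{p-1}\zeta$ shows it suffices to bound $\int \zeta(F_N(u))\, e^{p\|S_Nu\|^4_{L^4}/4}\,d\mu$ uniformly in $N$; by the layer-cake representation
$$\int \zeta(F_N(u))\, e^{p\|S_Nu\|^4_{L^4}/4}\, d\mu \;=\; \int_0^\infty \mu\bigl\{u : \zeta(F_N(u))\,e^{p\|S_Nu\|^4_{L^4}/4} > \lambda\bigr\}\,d\lambda,$$
the problem reduces to producing an $N$-independent, $\lambda$-integrable tail bound. On this event one has, first, $F_N(u) \in \mathrm{supp}\,\zeta$, so $\|\Pi_N u\|^2_{L^2} \leq \alpha_N + R$ for the fixed radius $R$ of $\mathrm{supp}\,\zeta$, and hence $\|S_N u\|_{L^2} \leq \|\Pi_Nu\|_{L^2} \leq (\alpha_N+R)^{1/2} \sim \sqrt{\log N}$ using \eqref{commut}; second, $\|S_Nu\|_{L^4}^4 \geq \Lambda := (4/p)\log(\lambda/\|\zeta\|_\infty)$.

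\textbf{Chaining over frequencies.} A naive application of Gagliardo--Nirenberg (Lemma~\ref{lem.GN}) plus the Gaussian tail (Lemma~\ref{ld}) would only yield an estimate that blows up with $N$, because of the $\sqrt{\log N}$ loss above. To overcome this I would split $S_N u = S_{N_0} u + (S_N - S_{N_0}) u$ for $N_0 = N_0(\lambda) \ll N$ to be optimized, and use $\|S_Nu\|^4_{L^4} \leq 8\|S_{N_0}u\|^4_{L^4} + 8\|(S_N - S_{N_0})u\|^4_{L^4}$. For the high-frequency term, the refined tail~\eqref{vtoro} together with the Sobolev embedding $\mathcal{W}^{s,p'} \hookrightarrow L^4$ (valid for $s\in(0,1/6)$ and $p'$ large, via \eqref{weyl-hormander} and the usual embedding) yields an $O(1)$ contribution outside a set of $\mu$-measure $\leq Ce^{-cN_0^{\beta(s)}}$. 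For the low-frequency term, Lemma~\ref{lem.BN} applied with $M = N_0$ shows $|F_N(u) - F_{N_0}(u)| \leq 1$ outside a set of measure $\leq Ce^{-c\sqrt{N_0}}$; on the intersection of these two ``good events'' the cutoff yields the improved bound $\|S_{N_0}u\|^2_{L^2} \leq \|\Pi_{N_0}u\|^2_{L^2} \leq \alpha_{N_0} + R + 1 \leq C\log N_0$.

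\textbf{Large deviation and balancing.} On the good events, Lemma~\ref{lem.GN} with $\theta<2$ gives
$$\|S_{N_0}u\|^4_{L^4} \;\leq\; C(\log N_0)^{(4-\theta)/2}\,\|S_{N_0}u\|^{\theta}_{\mathcal{W}^{s,p'}},$$
so the requirement that the low-frequency piece contribute a nontrivial fraction of $\Lambda$ forces $\|S_{N_0}u\|_{\mathcal{W}^{s,p'}}^\theta \geq c\Lambda(\log N_0)^{-(4-\theta)/2}$, an event of $\mu$-probability at most $C\exp\!\bigl(-c\Lambda^{2/\theta}(\log N_0)^{-(4-\theta)/\theta}\bigr)$ by Lemma~\ref{ld}. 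Choosing, e.g., $N_0 = \lfloor(\log\lambda)^2\rfloor$ (or $N_0 = N$ if this exceeds $N$, in which case the chaining is empty), Lemma~\ref{lem.BN} yields polynomial decay $\lambda^{-c}$, \eqref{vtoro} gives super-polynomial decay, and since $2/\theta > 1$ while $\log N_0 \sim 2\log\log\lambda$, the last bound becomes $\leq C\exp(-c(\log\lambda)^\gamma)$ for some $\gamma > 1$. Summed, these dominate any power of $\lambda$ uniformly in $N$, yielding the proposition.

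\textbf{Main obstacle.} The delicate point is the balancing in the last step: one must simultaneously ensure that the Gagliardo--Nirenberg loss $(\log N_0)^{(4-\theta)/2}$ is beaten by the large-deviation gain $\Lambda^{2/\theta}$, \emph{and} that $\sqrt{N_0}$ dominates $\log\lambda$ so that Lemma~\ref{lem.BN} is effective. This is precisely what forces both the fine probabilistic estimate~\eqref{vtoro} and the sharp renormalization of $\|\Pi_N u\|^2_{L^2}$ by the constant $\alpha_N$ in the definition of $G_N$; dropping either would make the chain fail.
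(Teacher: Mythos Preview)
Your approach is essentially the same as the paper's: split $S_Nu$ at a frequency threshold $N_0=N_0(\lambda)$, use Lemma~\ref{lem.BN} to transfer the cutoff $\zeta(F_N)$ into a bound $\|\Pi_{N_0}u\|_{L^2}^2\leq C\log N_0$, then combine Gagliardo--Nirenberg with the Gaussian tail of Lemma~\ref{ld} on the low-frequency piece and \eqref{vtoro} on the high-frequency piece. The paper does exactly this, with the cases $N_0\geq N$ and $N_0<N$ treated separately.

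There is, however, a genuine quantitative gap in your final balancing. With $N_0=\lfloor(\log\lambda)^2\rfloor$ the bad event from Lemma~\ref{lem.BN} has probability $\leq Ce^{-c\sqrt{N_0}}=C\lambda^{-c}$ for the \emph{fixed} constant $c$ of that lemma, which need not exceed $1$; and the bad event from \eqref{vtoro} (taking the high-frequency $\mathcal W^{s,p'}$ norm to be $O(1)$) has probability $\leq Ce^{-cN_0^{\beta(s)}}=Ce^{-c(\log\lambda)^{2\beta(s)}}$, which is super-polynomial only if $\beta(s)>1/2$, and nothing in Lemma~\ref{ld} guarantees this. So your conclusion ``these dominate any power of $\lambda$'' does not follow from $N_0=(\log\lambda)^2$. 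The paper fixes this by taking $N_0=(\log\lambda)^l$ with $l>\max\bigl(2,\tfrac{1}{\beta(0)}+1\bigr)$: then $\sqrt{N_0}=(\log\lambda)^{l/2}$ with $l/2>1$ makes the Lemma~\ref{lem.BN} term decay like $e^{-c(\log\lambda)^{l/2}}$, and $N_0^{\beta(0)}(\log\lambda)^{1/2}=(\log\lambda)^{l\beta(0)+1/2}$ with exponent $>1$ handles \eqref{vtoro}. With this adjustment your argument goes through and coincides with the paper's.
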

\begin{proof}
Our aim is to show that the integral
$\int_{0}^{\infty}\lambda^{p-1}\mu(A_{\lambda,N})d\lambda$ 
is convergent uniformly with respect to $N$, where
\begin{equation*}
A_{\lambda,N}=
\big\{u\in {\mathcal H}^{-\sigma}\,:\,
\zeta\big(\|\Pi_{N}u\|_{L^2(\R)}^{2}-\alpha_{N}\big)
e^{\frac{1}{4}\int_{\R}|S_Nu(x)|^4\text{d}x}>\lambda
\big\}.
\end{equation*}
Proposition \ref{prop.lp} is a straightforward consequence of the following lemma.
\begin{lemm} For any $L>0$, there exists $C>0$ such that for every $N$ and
every $\lambda\geq 1$, 
$$\mu(A_{\lambda, N}) \leq C \lambda ^{-L}.$$
\end{lemm}

\noindent We set
$$
N_0\equiv (\log\lambda)^{l},
$$
where $l$ is fixed such that $l>\max(2, \frac{1}{\beta(0)}+1)$ with $\beta(0)$
defined by Lemma~\ref{ld}.

\noindent Let us first suppose that $N_0\geq N$. Using Lemma~\ref{lem.GN} and that $\|S_Nu\|_{L^{2}}\leq \|\Pi_{N}u\|_{L^{2}}$, we get for $u\in
A_{\lambda,N}$,
\begin{eqnarray*}
\int_{\R}|S_Nu(x)|^4\text{d}x &\leq &C
\|S_Nu\|_{L^2(\R)}^{4-\theta}\|S_Nu\|^{\theta}_{{\mathcal W}^{s,p}(\R)}\\
&\leq &C(\log\log\lambda)^{2-\theta/2}\|S_Nu\|^{\theta}_{{\mathcal W}^{s,p}(\R)}\,.
\end{eqnarray*}
Therefore there exists $\delta>0$ such that
$$
\mu(A_{\lambda,N})\leq C\mu \big(u\in {\mathcal H}^{-\sigma}\,:\, 
\|S_Nu\|_{{\mathcal W}^{s,p}(\R)}>(\log\lambda)^{1/2+\delta}\big),
$$
and using Lemma~\ref{ld}, we obtain that for every $L>0$ there exists $C_L$ such
that for every
$N$ and $\lambda$ such that $(\log\lambda)^{l}\geq N$ one has
\begin{equation}\label{An2}
\mu(A_{\lambda,N})\leq C_{L}\lambda^{-L}.
\end{equation}
We next consider the case $N>N_0$. Consider the set
$$
B_{\lambda,N}=\big\{u\in {\mathcal H}^{-\sigma}\,:\,
\big|(\|\Pi_{N}u\|_{L^2(\R)}^{2}-\alpha_{N})-(\|\Pi_{N_0}u\|_{L^2(\R)}^{2}-\alpha_{N_0})\big|>1\big\}.
$$
By Lemma \ref{lem.BN}, we get
$$
\mu(B_{\lambda,N})\leq 2\exp(-c(\log\lambda)^{l/2})\leq C_{L}\lambda^{-L}\,.
$$
Hence it remains to evaluate $\mu(A_{\lambda,N}\backslash B_{\lambda,N})$.
Let us observe that for $u\in A_{\lambda,N}\backslash B_{\lambda,N}$ one has
\begin{equation*}
\begin{aligned}
\|\Pi_{N_0}u\|_{L^2}^{2} & = (\|\Pi_{N}u\|_{L^2}^{2}-\alpha_{N})-
\big[(\|\Pi_{N}u\|_{L^2}^{2}-\alpha_{N})-(\|\Pi_{N_0}u\|_{L^2}^{2}-\alpha_{N_0})\big]
\\
&\ \qquad+\alpha_{N_0}
\\
& \leq 
C+C\log(N_0)\leq C\log\log\lambda\,.
\end{aligned}
\end{equation*}
Therefore $A_{\lambda,N}\backslash B_{\lambda,N}\subset C_{\lambda,N}$ where
$$
C_{\lambda,N}\equiv
\big\{
u\in {\mathcal H}^{-\sigma}\,:\, 
\|S_Nu\|_{L^4}
\geq
c[\log\lambda]^{1/4},\,\,
\|\Pi_{N_0}u\|_{L^2}^{2}\leq C\log\log\lambda
\big\}.
$$
We next observe that thanks to the triangle inequality
$C_{\lambda,N}\subset D_{\lambda,N}\cup E_{\lambda,N}$, where
$$
D_{\lambda,N}\equiv
\big\{
u\in {\mathcal H}^{-\sigma}\,:\, 
\|S_{N_0}u\|_{L^4}
\geq
\frac{c}{4}[\log\lambda]^{1/4},\,\,
\|\Pi_{N_0}u\|_{L^2}^{2}\leq C\log\log\lambda
\big\},
$$
and
$$
E_{\lambda,N}\equiv
\big\{
u\in {\mathcal H}^{-\sigma}\,:\, 
\|S_Nu-S_{N_0}u\|_{L^4}
\geq
\frac{c}{4}[\log\lambda]^{1/4}
\big\}.
$$
The measure of $D_{\lambda,N}$ can be estimated exactly as we did in the analysis of
the case $N_0\geq N$.
Finally, using Lemma~\ref{ld}, thanks to the choice of $N_0$, we get
$$
\mu(E_{\lambda,N})\leq Ce^{-cN_0^{\beta(0)}(\log\lambda)^{1/2}}
\leq C_{L}\lambda^{-L}\,.
$$
This ends the proof of the lemma, and Proposition \ref{prop.lp} follows.
\end{proof}
We are now able to complete the proof of Theorem \ref{prop}

\begin{proof}[Proof of Theorem \ref{prop} (ii)] 
According to~\eqref{def.G}, we can extract a sub-sequence $G_{N_{k}}(u)$ so that $G_{N_{k}}(u)\longrightarrow G(u)$, $\mu$ a.s. Then by Proposition \ref{prop.lp} and the Fatou lemma, for all $p\in [1,+\infty)$,
\begin{equation*}
\int_{\H^{-\s}(\R)}|G(u)|^{p}\text{d}\mu(u)\leq \liminf_{k\to \infty} \int_{\H^{-\s}(\R)}|G_{N_{k}}(u)|^{p}\text{d}\mu(u)\leq C,
\end{equation*}
thus $G(u)\in L^{p}(\text{d}\mu(u))$.

Now it remains to check that for any Borelian set, $A\subset\mathcal{H}^{- \sigma}$, we have 
\begin{equation}\label{conv.faible}
\lim_{N\to +\infty}\int_{\H^{-\s}(\R)}1_{u\in A}G_{N}(u)\text{d}\mu(u)= \int_{\H^{-\s}(\R)}1_{u\in A}G(u)\text{d}\mu(u), \end{equation}
which will be implied by 
\begin{equation}\label{conv.faible@}
\lim_{N\to +\infty}\int_{\H^{-\s}(\R)}|1_{u\in A}(G_{N}(u)-G(u))|\text{d}\mu(u)= 0.
\end{equation}
For $N\geq 0$ and $\eps>0$, we introduce the set 
\begin{equation*} 
B_{N,\eps}=\big\{u \in \H^{-\s}(\R)\::\:|G_{N}(u)-G(u)|\leq \eps \big\},
\end{equation*}
and denote by $\ov{B_{N,\eps}}$ its complementary.\\
Firstly, as $1_{u\in A}$ is bounded, there exists $C>0$ so that for all $N\geq 0$, $\eps>0$
\begin{equation*}
\big|\int_{B_{N,\eps}}1_{u\in A}\big(G_{N}(u)-G(u)\big)\text{d}\mu(u)\big|\leq C\eps.
\end{equation*}
Secondly, by Cauchy-Schwarz, Proposition \ref{prop.lp} and as $G(u)\in L^{p}(\text{d}\mu(u))$, 
we obtain
\begin{eqnarray*}
\big|\int_{\ov{B_{N,\eps}}}1_{u\in A}\big(G_{N}(u)-G(u)\big)\text{d}\mu(u)\big|&\leq& \|G_{N}-G(u)\|_{L^{2}(\text{d}\mu)}\mu(\,\ov{B_{N,\eps}}\,)^{\frac12}\\
&\leq&C \mu(\,\ov{B_{N,\eps}}\,)^{\frac12}.
\end{eqnarray*}
By \eqref{def.G}, we deduce that for all $\eps>0$,
\begin{equation*}
\mu(\,\ov{B_{N,\eps}}\,)\longrightarrow 0, \quad N\longrightarrow +\infty,
\end{equation*}
which yields \eqref{conv.faible@}. This ends the proof of Theorem \ref{prop}
(ii). 
\end{proof}
Notice that \eqref{conv.faible@} with $A= \mathcal{H}^{-\s}$ gives
\begin{equation}\label{masse}
\rho\big(\H^{-\s}(\R)\big)=\lim_{N\rightarrow + \infty} \rho_N\big( \mathcal{H}^{- \sigma}( \R)\big)= \lim_{N\to \infty}\tilde{\rho}_{N}(E_{N}).
\end{equation}
\begin{proof}[Proof of Theorem \ref{prop} (i)]
By the argument giving~\eqref{parvo}, $\|u\|_{L^{p+1}(\R)}$ is $\mu$ almost surely
finite. As a consequence, the measure $\rho$ in the defocusing case is
nontrivial. The proof of the weak convergence of $\text{d}\rho_{N}$ to
$\text{d}\rho$ can be deduced from the proof in the focusing case. This
completes the proof of Theorem~\ref{prop}.
\end{proof}
\noindent In the focusing case, the measure $\rho=\rho_{\zeta}$ we have constructed depends on $\zeta\in \mathcal{C}_{0}^{\infty}(\R)$. We now check that it is in general not trivial. Indeed we have the following result

\begin{prop}\label{union}
The supports of the measures satisfy
\begin{equation*}
\bigcup_{\zeta \in \mathcal{C}_{0}^{\infty}(\R)}{\rm supp}\, \rho_{\zeta}={\rm supp}\;\mu.
\end{equation*}
\end{prop}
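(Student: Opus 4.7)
The plan is to prove the two inclusions separately. The easy inclusion $\bigcup_{\zeta}{\rm supp}\,\rho_\zeta\subseteq{\rm supp}\,\mu$ is immediate from the absolute continuity $\rho_\zeta\ll\mu$: the density $G_\zeta(u)=\zeta(F(u))\,\e^{\|u\|_{L^4}^4/4}$ is a non-negative function in every $L^p(\dd\mu)$ by Proposition~\ref{prop.lp}, so any open set of $\mu$-measure zero has $\rho_\zeta$-measure zero, and a point outside ${\rm supp}\,\mu$ admits an open neighborhood annihilated by every $\rho_\zeta$.

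For the reverse inclusion I would fix $u_0\in{\rm supp}\,\mu$ and an arbitrary open neighborhood $U\subset\H^{-\sigma}(\R)$ of $u_0$, then exhibit a non-negative $\zeta\in\mathcal{C}_0^\infty(\R)$ satisfying $\rho_\zeta(U)>0$. The argument hinges on two $\mu$-a.s.\ finiteness statements for the quantities entering the density: first, $F(u)=\lim_N(\|\Pi_N u\|_{L^2}^2-\alpha_N)$ is finite, thanks to Lemma~\ref{lem.cauchy} (convergence in $L^2(\dd\mu)$, hence $\mu$-a.s.\ along a subsequence); second, $\|u\|_{L^4(\R)}$ is finite, obtained by applying Lemma~\ref{ld} with $s=0$, $p=4$ together with~\eqref{vtoro} and a Borel--Cantelli argument, which yields $\mu$-a.s.\ convergence of $S_N u$ to $u$ in $L^4(\R)$, and in particular identifies $\|u\|_{L^4(\R)}$ with $\lim_N\|S_N u\|_{L^4(\R)}$, i.e.\ with the quantity appearing in the limiting density~\eqref{def.G}.

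Setting $A=\{u\in\H^{-\sigma}(\R):|F(u)|<\infty\text{ and }\|u\|_{L^4(\R)}<\infty\}$, the two points above give $\mu(A)=\mu(\H^{-\sigma}(\R))$, hence $\mu(U\cap A)=\mu(U)>0$. For $M>0$ set $V_M=\{u\in U\cap A:|F(u)|\leq M,\ \|u\|_{L^4(\R)}\leq M\}$; the sets $V_M$ increase to $U\cap A$, so by monotone convergence $\mu(V_M)>0$ for some $M$ large enough. I would then pick $\zeta\in\mathcal{C}_0^\infty(\R)$ non-negative, supported in $(-M-1,M+1)$ and equal to $1$ on $[-M,M]$, so that $G_\zeta(u)\geq 1$ on $V_M$, giving
\begin{equation*}
\rho_\zeta(U)\geq\int_{V_M}G_\zeta(u)\,\dd\mu(u)\geq\mu(V_M)>0,
\end{equation*}
which shows $u_0\in{\rm supp}\,\rho_\zeta$ and completes the proof.

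The only real point of care is step (b) above: one must verify that the density in~\eqref{def.G} genuinely involves $\|u\|_{L^4(\R)}$ rather than merely a distributional limit of $\|S_N u\|_{L^4(\R)}$. This follows from the quantitative large deviation bound~\eqref{vtoro} combined with Borel--Cantelli, so is a routine verification; all remaining ingredients (absolute continuity, monotone convergence, and $L^p$-integrability of $G_\zeta$) are direct consequences of results already established in the paper.
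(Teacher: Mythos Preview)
Your proof is correct and follows essentially the same route as the paper's: both establish the nontrivial inclusion by showing that for any open $U$ with $\mu(U)>0$ one can choose $\zeta$ so that the density $G_\zeta$ is bounded below on a subset of $U$ of positive $\mu$-measure. The only differences are cosmetic: the paper controls $\mu(|F(u)|>R)$ via the quantitative large-deviation bound of Lemma~\ref{lem.BN} (and leaves the $L^4$ part implicit, since $e^{\|u\|_{L^4}^4/4}\geq 1$ automatically), whereas you obtain $\mu$-a.s.\ finiteness of $F$ from the softer $L^2(\dd\mu)$ convergence of Lemma~\ref{lem.cauchy} and explicitly include the constraint $\|u\|_{L^4}\leq M$ in your set $V_M$.
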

\begin{proof}
By construction, it is clear that for all $\zeta\in \mathcal{C}_{0}^{\infty}(\R)$, the support of $\rho_{\zeta}$ is included in the support of $\mu$.\\
Let $R\gg 1$ and $\zeta \in \mathcal{C}^{\infty}_{0}(\R)$ so that $0\leq \zeta \leq 1$ with $\zeta=1$ on $|x|\leq R$, and consider the associated 
measure $\rho_{\zeta}$. Let $\eps>0$. We will show that if $R$ is large enough
\begin{equation}\label{2.20}
\mu\big(u\in \H^{-\s}\,:\,|F(u)|\leq R\,\big)\geq 1-\eps.
\end{equation}
This will yield the result, as the density of $\rho_{\zeta}$ does not vanish on the set 
\linebreak[4] $\{ u\in \H^{\s}\,:\,|F(u)|\leq R \}$.\\
Write
\begin{multline}\label{2.21}
\big\{ u\in \H^{-\s}\,:\,|F(u)|>R\,\} \subset \\
\big\{ u\in \H^{-\s}\,:\,|F_{N}(u)|>R-1\,\} \cup \big\{ u\in \H^{-\s}\,:\,|F(u)-F_{N}(u)|>1\,\}, 
\end{multline}
and
\begin{equation*}
\big\{ |F_{N}(u)|>R-1\,\} \subset \\
\big\{ |F_{N}(u)-F_{0}(u)|>\frac{R-1}2\,\} \cup \big\{ |F_{0}(u)|>\frac{R-1}2\,\}.
\end{equation*}
By Lemma \ref{lem.BN} and by the direct estimate 
$$\mu(u\in \H^{-\s}\,:\, |F_{0}(u)|>\frac{R-1}2)\leq C\e^{-c R},$$
we obtain that (uniformly in $N$)
\begin{equation}\label{2.22}
\mu(u\in \H^{-\s}\,:\, |F_{N}(u)|>R-1)\leq C\e^{-c R}\leq \eps/2,
\end{equation}
if $R$ is large enough. By Lemma \ref{lem.cauchy}, if $N$ is large enough, we also have 
\begin{equation}\label{2.23}
\mu(u\in \H^{-\s}\,:\,|F(u)-F_{N}(u)|>1)\leq \eps/2.
\end{equation}
Hence from \eqref{2.21}-\eqref{2.23} we deduce \eqref{2.20}.
This in turn completes the proof of Proposition~\ref{union}.
\end{proof}
Let us remark that in the construction of the measure $\rho$ in the focusing case one may replace the
assumption of compact support on $\zeta$ by a sufficiently rapid decay as for example 
$\zeta(x)\sim \exp(-|x|^K)$, $|x|\gg 1$ with $K$ large enough. 
\section{Functional calculus of $H$}
By the classical Mehler formula for $2t\neq k\pi$ ($k\in\Z$), $f\in L^1(\R)$,
\begin{equation}\label{bg1}
e^{-itH}(f)=\frac{1}{|\sin(2t)|^{1/2}}\int_{-\infty}^{\infty}e^{i\frac{(x^2/2+y^2/2)\cos(2t)-xy}{\sin(2t)}}f(y)dy.
\end{equation}
One may check (\ref{bg1}) by a direct computation.
The explicit representation of the kernel of $\exp(-itH)$ given by (\ref{bg1}) will allow us 
to develop the functional calculus of $H$ which will be of importance in several places of our
proof of Theorem~\ref{thm4}. 
The representation of $e^{-itH}$ given by (\ref{bg1}) is also the key point of the 
proof of the local in time (deterministic) Strichartz estimates of the next section. The goal of this 
section is to prove the following statement.
\begin{prop}\label{prop.cont}
Consider for $\phi \in \mathcal{S}(\mathbb{R})$ the operator $\phi( h^2 H)$. 
Then, for any $1\leq p \leq +\infty$ and any $|\alpha| <1$, there exists $C>0$ 
such that for any $0<h\leq 1$,
$ 
\|\langle x \rangle ^{-\alpha} \phi( h^2 H) \langle x \rangle ^{\alpha} \|_{\mathcal{L} ( L^p ( \mathbb{R}))}
\leq C. 
$
\end{prop}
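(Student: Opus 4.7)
The plan is to obtain a sharp pointwise bound on the Schwartz kernel of $\phi(h^2H)$ and then invoke Schur's test with the weights $\langle x\rangle^{\pm\alpha}$. Using Fourier inversion, write
$$\phi(h^2H)=\frac{1}{2\pi}\int_{\R}\wh\phi(t)\,e^{-ith^2H}\,dt,$$
and substitute the Mehler formula \eqref{bg1} for the kernel of $e^{-ith^{2}H}$. After the change of variable $s=th^{2}$, the kernel of $\phi(h^{2}H)$ becomes
$$K_h(x,y)=\frac{1}{2\pi h^{2}}\int_{\R}\wh\phi(s/h^{2})\,\frac{e^{i\Phi_s(x,y)}}{|\sin(2s)|^{1/2}}\,ds,\qquad \Phi_s(x,y)=\frac{(x^{2}+y^{2})\cos(2s)/2-xy}{\sin(2s)}.$$

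Since $\wh\phi\in\mathcal{S}(\R)$, the factor $\wh\phi(s/h^{2})$ is concentrated on $|s|\lesssim h^{2}$ and has an $O(h^{\infty})$ tail outside $|s|\le\pi/4$. In the window $|s|\le\pi/4$ the denominator $|\sin(2s)|^{1/2}$ has only the integrable singularity at $s=0$, and the Taylor expansions $\sin(2s)=2s+O(s^{3})$, $\cos(2s)=1+O(s^{2})$ give $\Phi_{s}(x,y)=(x-y)^{2}/(4s)+O(s(x+y)^{2})$. An integration-by-parts argument in $s$, treating $\wh\phi(s/h^{2})$ as slowly varying on scale $h^{2}$, yields the diagonal-concentrated bound
\begin{equation*}
|K_h(x,y)|\le C_N\,h^{-1}\Bigl\langle\frac{x-y}{h}\Bigr\rangle^{-N},\qquad (x,y)\in\R^{2},\ 0<h\le1,
\end{equation*}
for every $N\ge1$. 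The contributions from $|s|\ge\pi/4$, away from $s=0$, are handled either by the $O(h^{\infty})$ decay of $\wh\phi(s/h^{2})$ or, more cleanly, by using the quarter-period identity $e^{-i\pi H/4}=e^{-i\pi/4}\mathcal{F}$ (a direct consequence of $\mathcal{F}h_{n}=(-i)^{n}h_{n}$) to move each singularity back to a neighbourhood of $s=0$ via a unitary change of variables.

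From the above pointwise bound the conclusion follows by Schur's lemma, combined with interpolation between $L^{1}$ and $L^{\infty}$. For $0\le\alpha<1$ Peetre's inequality $\langle y\rangle^{\alpha}\le C\,\langle x\rangle^{\alpha}\langle x-y\rangle^{\alpha}$ gives
$$\int\langle x\rangle^{-\alpha}|K_{h}(x,y)|\langle y\rangle^{\alpha}\,dy\le C\int h^{-1}\Bigl\langle\frac{x-y}{h}\Bigr\rangle^{-N}\langle x-y\rangle^{\alpha}\,dy,$$
and the substitution $y=x+hz$ turns the right-hand side into $C\,h^{\alpha}\int\langle z\rangle^{\alpha-N}\,dz$, which is uniformly bounded in $x$ and in $h\le 1$ for $N$ large enough. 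The symmetric bound for $\sup_{y}\int\langle x\rangle^{-\alpha}|K_{h}(x,y)|\langle y\rangle^{\alpha}\,dx$ is obtained in the same way, and the case $-1<\alpha<0$ is treated by exchanging the roles of $x$ and $y$.

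The principal technical difficulty lies in Step~2, namely in controlling the Mehler kernel across the singularities of $|\sin(2s)|^{-1/2}$ at $s\in(\pi/2)\Z$; the quarter-period identity above is the structural ingredient that disposes of this uniformly in $h$. Once a Schwartz-type diagonal bound on $K_{h}$ is available, the weighted boundedness is reduced to an elementary kernel calculation, the restriction $|\alpha|<1$ entering only through Peetre's inequality and giving the margin needed to absorb the weight ratio against the diagonal decay.
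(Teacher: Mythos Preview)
Your overall architecture coincides with the paper's: derive a pointwise bound on the kernel of $\phi(h^{2}H)$ via the Mehler formula and an integration by parts in the time variable, then apply Schur's test with the weights $\langle x\rangle^{\pm\alpha}$. The gap is in the kernel bound you claim.

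The bound $|K_{h}(x,y)|\le C_{N}h^{-1}\langle (x-y)/h\rangle^{-N}$ is \emph{not} what your Mehler argument delivers. Globally in $\tau$ one only has
\[
\partial_{\tau}\psi \;=\; \frac{2h^{2}}{\sin^{2}(2h^{2}\tau)}\Bigl(\tfrac{x^{2}+y^{2}}{2}-xy\cos(2h^{2}\tau)\Bigr)\;\ge\;\frac{h^{2}(|x|-|y|)^{2}}{\sin^{2}(2h^{2}\tau)},
\]
and the sharper lower bound with $(x-y)^{2}$ is available only where $\cos(2h^{2}\tau)>0$. Your quarter-period identity actually makes this explicit: near $s=\pi/2$ one has $e^{-isH}=e^{-is'H}e^{-i\pi H/2}$ and $e^{-i\pi H/2}$ is, up to a phase, the parity operator, so that piece of the kernel is concentrated near $x=-y$, not $x=y$. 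Saying these contributions are ``$O(h^{\infty})$'' is true but insufficient: a bound uniform in $(x,y)$ is not integrable against $\langle y\rangle^{\alpha}$ and hence cannot be fed into Schur. This is also why your explanation of the restriction $|\alpha|<1$ is off: Peetre's inequality holds for every exponent, and with your claimed arbitrary-$N$ decay Schur would succeed for all $\alpha$. The constraint $|\alpha|<1$ really comes from the fact that a single integration by parts in $\tau$ produces only quadratic decay,
\[
|K_{h}(x,y)|\;\le\;\frac{C}{h}\,\Bigl(1+\frac{(|x|-|y|)^{2}}{h^{2}}\Bigr)^{-1},
\]
and Schur then needs $\int\langle z\rangle^{\alpha}(1+z^{2})^{-1}\,dz<\infty$, i.e.\ $\alpha<1$.

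If you replace $(x-y)$ by $(|x|-|y|)$ and take $N=2$, your outline becomes correct and is exactly the paper's proof: the paper establishes the displayed quadratic bound via one integration by parts over all of $\tau$ (using the global phase lower bounds $\partial_{\tau}\psi\ge h^{2}(x^{2}+y^{2})/2$ and $\partial_{\tau}\psi\ge h^{2}(|x|-|y|)^{2}/\sin^{2}(2h^{2}\tau)$ to control the boundary and commutator terms), and then runs Schur essentially as you do.
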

One may prove Proposition~\ref{prop.cont} by using a suitable pseudo-differential calculus.
We present here a direct proof based on the Mehler formula. 
The result of Proposition~\ref{prop.cont} is a consequence of the following lemma.
\begin{lemm}\label{lem.cont}
Let  $K(x,y, h ) $ be the kernel of the operator $\phi( h^2 H)$. Then there exists $C>0$ such that for any $0<h\leq 1$, we have 
\begin{equation} \label{eq.kernel}
|K(x,y,h)| \leq \frac{C} {h ( 1 + \frac{ (|x|-|y|)^2} {h^2})}\,.
\end{equation}
\end{lemm}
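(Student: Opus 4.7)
The plan is to use the Fourier representation $\phi(h^2H)=\frac{1}{2\pi}\int_{\R}\hat\phi(\tau)e^{i\tau h^2H}d\tau$ together with the explicit Mehler formula \eqref{bg1} (applied with $t=-\tau h^2$) for the kernel of $e^{i\tau h^2H}$. Making the rescaling $s=2\tau h^2$ expresses, up to harmless constants,
\[
K(x,y,h) \;=\; \frac{C}{h^2}\int_{\R}\hat\phi\!\left(\tfrac{s}{2h^2}\right)\frac{e^{-i\Psi_0(s,x,y)}}{|\sin s|^{1/2}}\,ds,\qquad \Psi_0(s,x,y)\;=\;\frac{(x^2+y^2)\cos s-2xy}{2\sin s}.
\]
The key algebraic input is the identity
\[
\partial_s\Psi_0 \;=\; -\,\frac{(x-y)^2}{8\sin^2(s/2)} \;-\; \frac{(x+y)^2}{8\cos^2(s/2)},
\]
obtained from $(x^2+y^2)-2xy\cos s=(x-y)^2\cos^2(s/2)+(x+y)^2\sin^2(s/2)$. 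Since $|x-y|,|x+y|\geq ||x|-|y||$ and $\csc^2(s/2)+\sec^2(s/2)=4/\sin^2 s$, this yields the crucial lower bound $|\partial_s\Psi_0|\geq (|x|-|y|)^2/(2\sin^2s)$; it is this step which forces $||x|-|y||$ (and not $|x-y|$) to appear in the conclusion.

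I would next split $K=K_1+K_2$ using a smooth cutoff at $|s|=\pi/4$. For the far piece $K_2$, Schwartz decay of $\hat\phi$ gives $|\hat\phi(s/(2h^2))|\leq C_Nh^{2N}|s|^{-N}$, and since the singularities of $|\sin s|^{-1/2}$ at $s=k\pi$ ($k\neq 0$) are integrable and absorbed by this rapid decay, $K_2=O(h^M)$ for every $M$, which is dominated by the target. For the near piece $K_1$ (where $\sin^2(s/2)\asymp s^2$), I would prove two complementary bounds. Estimating the integral absolutely and splitting at $|s|\sim h^2$ using $|\hat\phi(\sigma)|\lesssim(1+|\sigma|)^{-N}$ gives $\int|\hat\phi(s/(2h^2))|\,|\sin s|^{-1/2}ds\lesssim h$, hence the diagonal bound $|K_1|\leq C/h$. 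For the off-diagonal bound, I would integrate by parts once using the operator $L=(i\partial_s\Psi_0)^{-1}\partial_s$, which satisfies $L(e^{-i\Psi_0})=-e^{-i\Psi_0}$. The three resulting terms (from differentiating $\hat\phi$, $|\sin s|^{-1/2}$, and $(\partial_s\Psi_0)^{-1}$) are each bounded using $1/|\partial_s\Psi_0|\leq 2\sin^2s/(|x|-|y|)^2$ together with the companion estimate $|\partial_s^2\Psi_0|/(\partial_s\Psi_0)^2\lesssim |\sin s|/(|x|-|y|)^2$ obtained by differentiating the explicit formula, yielding $|K_1|\leq Ch/(|x|-|y|)^2$.

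Combining the two bounds gives $|K(x,y,h)|\leq C\min\bigl(h^{-1},\,h/(|x|-|y|)^2\bigr)\asymp Ch/(h^2+(|x|-|y|)^2)$, which is \eqref{eq.kernel}. The main obstacle is structural rather than computational: one must recognize that $\partial_s\Psi_0$ factorizes into a $(x-y)^2$-term weighted by $\csc^2(s/2)$ and a $(x+y)^2$-term weighted by $\sec^2(s/2)$. Neither $|x-y|$ nor $|x+y|$ alone is controlled (the first fails near $s=\pi$, where $x\approx-y$ is a secondary stationary configuration of the classical harmonic flow; the second fails near $s=0$), but their minimum $||x|-|y||$ is; this mirrors the fact that, unlike a free Schrödinger evolution, the Mehler kernel concentrates on both the diagonal and the anti-diagonal. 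It is also essential to keep the full $\sin^2 s$ (not just $|\sin s|$) in the lower bound on $|\partial_s\Psi_0|$, so that one integration by parts produces exactly enough powers of $s$ to cancel the $h^{-2}$ prefactor coming from the rescaling.
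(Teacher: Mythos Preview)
Your approach is essentially the paper's: Fourier inversion, Mehler's formula, and a single integration by parts against the phase derivative. The rescaling $s=2\tau h^2$ and the half-angle decomposition
\[
\partial_s\Psi_0 \;=\; -\,\frac{(x-y)^2}{8\sin^2(s/2)} \;-\; \frac{(x+y)^2}{8\cos^2(s/2)}
\]
are in fact slightly cleaner than the paper's presentation: the paper derives the same lower bound $|\partial_\tau\psi|\geq h^2(|x|-|y|)^2/\sin^2(2h^2\tau)$ via two separate lemmas, and needs an additional estimate $|\partial_\tau\psi|\geq h^2(x^2+y^2)/2$ to control the term $\partial_\tau^2\psi/(\partial_\tau\psi)^2$ (because of an $xy$ factor), whereas differentiating your explicit formula gives $|\partial_s^2\Psi_0|\leq 2|\partial_s\Psi_0|/|\sin s|$ directly, without invoking a second lower bound.

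There is, however, a gap in your treatment of $K_2$. The bound $K_2=O(h^M)$ is \emph{not} dominated by the target $Ch/(h^2+(|x|-|y|)^2)$: when $(|x|-|y|)^2$ is large compared to any fixed negative power of $h$, the target is much smaller than $h^M$, and your estimate gives no decay in $(|x|-|y|)$ at all. The paper avoids this by performing the integration by parts on the \emph{full} integral over $\tau\in\R$, not just on a neighbourhood of the origin. This works because your lower bound $|\partial_s\Psi_0|\geq(|x|-|y|)^2/(2\sin^2 s)$ is valid for every $s$, and the factor $\sin^2 s$ it contributes after IBP renders all the singularities of $|\sin s|^{-1/2}$ at $s=k\pi$ harmless. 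Alternatively you may keep the splitting and repeat the IBP on $K_2$, where the $h^{2N}$ decay of $\hat\phi(s/(2h^2))$ on $|s|\geq\pi/4$ then combines with the $(|x|-|y|)^{-2}$ gain. Either way the fix is immediate, but the sentence ``$K_2=O(h^M)$, which is dominated by the target'' is false as written.
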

Let us now show how Lemma~\ref{lem.cont} implies Proposition~\ref{prop.cont}. By duality, it suffices to consider the case $\alpha\geq 0$.\goodbreak
For $\alpha \geq 0$, we have 
\begin{multline*}
\int_{-\infty}^{\infty} |K(x,y,h)| \langle y \rangle ^ {\alpha} dy\leq  
C \int_{-\infty}^{\infty} \frac{\langle y \rangle ^ {\alpha} } {h ( 1 + \frac{ (|x|-|y|)^2} {h^2})} dy  \\
\begin{aligned}
&\leq  
2C \int_{0}^{\infty} \frac{\langle y \rangle ^ {\alpha} } {h ( 1 + \frac{ (|x|-y)^2} {h^2})} dy 
 \leq 
2C \int_{0}^{\infty} \frac{1+ |x|^\alpha + ||x|-y| ^{\alpha} } {h ( 1 + \frac{ (|x|-y)^2} {h^2})}dy \\
 &\leq  
2C\langle x \rangle ^{\alpha} + 2C\int_{0}^{\infty} \frac{ ||x|-y| ^{\alpha} } {h ( 1 + \frac{ (|x|-y)^2} {h^2})}dy 
  = 
2C\langle x \rangle ^{\alpha} + 2C\int_{-\infty}^{\infty}\frac{ |hz| ^{\alpha} } { ( 1 + z^2)}dz \leq 
C'\langle x \rangle ^{\alpha} \,.
\end{aligned}
\end{multline*}
On the other hand,
\begin{equation}
\int_{-\infty}^{\infty} |K(x,y,h)| \langle x \rangle ^ {-\alpha} dx \leq \int_{||x|-|y|| < |y| /2 }\cdots + \int_{||x|-|y|| >|y|/2}\cdots\,.
\end{equation}
The contribution of the first term is bounded by $C \langle y \rangle ^ {-\alpha}$, whereas, noticing that in the second integral we have 
$(|x|-|y|)^2 \geq c ( x^2 + y^2)$, the contribution of the second term is also easily bounded by 
$C\langle y \rangle ^{-\alpha}$. Finally, Proposition~\ref{prop.cont} follows by the Schur Lemma.
Thus in order to complete the proof of Proposition~\ref{prop.cont}, it remains to prove Lemma~\ref{lem.cont}.
\begin{proof}[Proof of Lemma~\ref{lem.cont}] 
We start from the representation
$$
\varphi(h^2 H)=(2\pi)^{-1}\int_{\tau\in\R}e^{i\tau h^2 H}\widehat{\varphi}(\tau)d\tau
$$
and thus according to Mehler's formula, we have 
$$ 
|K(x,y,h)|\leq C \big|\int_{\tau\in \mathbb{R}}\frac{1}{|\sin(2h^2 \tau)|^{1/2}} 
e^{i \psi(\tau, h, x, y)} \widehat{\phi} ( \tau) d \tau\big|,
$$
where 
$$ 
\psi (\tau, h, x, y) = -\frac {1} { \sin( 2h^2 \tau) } \big( \frac {x^2 + y^2} 2 \cos( 2 h^2 \tau ) -xy\big).
$$
As a consequence, decomposing
$$|K(x,y,h)| \leq \sum_{k \in \mathbb{Z}} 
\big|\int_{- \frac \pi 2 + k \pi < h^2 \tau < \frac \pi 2 + k \pi}
\frac {1}{ |\sin(2h^2 \tau)|^{1/2}} e^{i \psi(\tau, h, x, y)} \widehat{\phi} ( \tau) d \tau\big|
$$
and taking benefit that the function $\widehat { \phi}$ is in $\mathcal{S}$, we obtain
\begin{multline*}
|K(x,y,h)|\\ 
 \leq  \frac C h + 
\sum_{k \in \mathbb{Z}\setminus \{0\}} 
\big|\int_{- \frac \pi 2 + k \pi < h^2 \tau < \frac \pi 2 + k \pi}
\frac {1}{|\sin(2h^2 \tau)|^{1/2}} e^{i \psi(\tau, h, x, y)} \widehat{\phi} ( \tau) \big|
d \tau
 \leq  \frac C h\,.
\end{multline*}
As a consequence, it is enough to prove
\begin{equation}\label{eq.final}
|K(x,y,h)| \leq \frac {Ch} { (|x|-|y|)^2}.
\end{equation}
The key point of the analysis will be the following estimates on the phase function.
\begin{lemm}\label{lem.1} 
There exists $C>0$ such that for any $x,y\in \mathbb{R}$ and any $0<h\leq 1$ we have 
$$ \partial_\tau \psi(x,y,\tau, h) \geq h^2 \frac {x^2 + y^2}{2}\,.
$$
\end{lemm}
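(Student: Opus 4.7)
The plan is a direct computation: differentiate $\psi$ with respect to $\tau$, simplify using $\sin^2+\cos^2=1$, and then reduce the lower bound to a sum-of-squares identity.

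Introduce the auxiliary variable $\theta=2h^2\tau$, so that $\partial_\tau=2h^2\partial_\theta$ and
$$
\psi=-\frac{1}{\sin\theta}\Bigl(\tfrac{x^2+y^2}{2}\cos\theta-xy\Bigr).
$$
First I would apply the quotient rule. The derivative of the numerator is $-\tfrac{x^2+y^2}{2}\sin\theta$, and after multiplying by $\sin\theta$ we pick up a factor $-\tfrac{x^2+y^2}{2}\sin^2\theta$; the numerator times the derivative $\cos\theta$ of $\sin\theta$ gives the remaining contribution. Using $\sin^2\theta+\cos^2\theta=1$ to consolidate the $(x^2+y^2)/2$ terms, the result simplifies to
$$
\partial_\theta\psi=\frac{(x^2+y^2)/2-xy\cos\theta}{\sin^2\theta}.
$$

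The heart of the proof is then to show that this expression is bounded below by $(x^2+y^2)/4$ uniformly for $\theta$ with $\sin\theta\neq 0$. Setting $u=\cos\theta\in(-1,1)$, this is equivalent to
$$
\tfrac{x^2+y^2}{2}-xyu\;\geq\;\tfrac{x^2+y^2}{4}(1-u^2),
$$
that is, after clearing the $\tfrac14$,
$$
(x^2+y^2)(1+u^2)-4xyu\;\geq\;0.
$$
The key algebraic identity
$$
(x^2+y^2)(1+u^2)-4xyu=(x-uy)^2+(y-ux)^2
$$
makes this manifest. Combining with $\partial_\tau\psi=2h^2\partial_\theta\psi$ yields the claimed bound
$$
\partial_\tau\psi\geq 2h^2\cdot\tfrac{x^2+y^2}{4}=h^2\,\tfrac{x^2+y^2}{2}.
$$

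I do not expect any genuine obstacle: the only slightly delicate point is spotting the sum-of-squares identity that upgrades the trivial bound $(x^2+y^2)/2\geq|xy|$ (which only yields nonnegativity, since it degenerates at $u=\pm 1$) to a bound comparable to $x^2+y^2$. Once this identity is observed, both the singularity of $1/\sin^2\theta$ at $\theta\in\pi\Z$ and the degeneracy of the numerator at those same points cancel against one another, producing a clean uniform lower bound.
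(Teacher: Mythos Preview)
Your proof is correct. Both you and the paper compute the same derivative formula $\partial_\tau\psi=\frac{2h^2}{\sin^2(2h^2\tau)}\bigl(\tfrac{x^2+y^2}{2}-xy\cos(2h^2\tau)\bigr)$, and then bound the quadratic form from below; the only difference lies in the final algebraic step. The paper minimizes the expression $\tfrac{x^2+y^2}{2}-xy\cos\theta$ over $x$ (a one-variable completion of the square) to obtain $\partial_\tau\psi\geq h^2y^2$, and by symmetry $\partial_\tau\psi\geq h^2x^2$, whence $\partial_\tau\psi\geq h^2\max(x^2,y^2)\geq h^2\tfrac{x^2+y^2}{2}$. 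You instead prove the equivalent inequality in one stroke via the two-variable identity $(x^2+y^2)(1+u^2)-4xyu=(x-uy)^2+(y-ux)^2$. Both routes are elementary; the paper's is perhaps more immediate to spot, while yours has the virtue of yielding the sharp bound directly without the auxiliary step through $\max(x^2,y^2)$.
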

\begin{proof}
Indeed,
\begin{equation}\label{eq.derivee} 
\partial_\tau \psi(x,y,\tau, h) = 
\frac{ 2h^2} {  \sin ^2( 2h^2 \tau )} ( \frac { x^2 + y^2} 2 -xy \cos ( 2h^2 \tau ) )
\end{equation}
and minimizing with respect to $x$ the expression above gives 
$$ \partial_\tau \psi(x,y,\tau, h) \geq \frac{2 h^2} {  \sin ^2( 2h^2 \tau )} 
( \frac{y^2} 2 \sin^2 ( 2h^2 \tau ) )=
h^2y^2\,.
$$ 
Similarly 
$$ 
\partial_\tau \psi(x,y,\tau, h) \geq \frac{2 h^2} {  \sin ^2( 2h^2 \tau )} 
( \frac{x^2} 2 \sin^2 ( 2h^2 \tau ) )=h^2x^2
$$
and the result of Lemma~\ref{lem.1} follows.
\end{proof}
\begin{lemm}
\label{lem.2} 
There exists $C>0$ such that for any $x,y\in \mathbb{R}$, any $\tau$, and $0<h\leq 1$, we have 
$$ 
\partial_\tau \psi(x,y,\tau, h) \geq \frac{h^2(|x|-|y|)^2}{\sin^2 (2h^2 \tau )} \,.
$$
\end{lemm}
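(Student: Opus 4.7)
The plan is to work directly from the explicit formula for $\partial_\tau \psi$ already computed in equation~\eqref{eq.derivee}, namely
\begin{equation*}
\partial_\tau \psi(x,y,\tau, h) = \frac{2h^2}{\sin^2(2h^2\tau)}\Bigl(\frac{x^2+y^2}{2} - xy\cos(2h^2\tau)\Bigr),
\end{equation*}
and reduce the lemma to a purely algebraic lower bound on the expression in parentheses. Specifically, I want to show
\begin{equation*}
\frac{x^2+y^2}{2} - xy\cos(2h^2\tau) \;\geq\; \frac{(|x|-|y|)^2}{2},
\end{equation*}
from which the claim follows immediately by dividing by $\sin^2(2h^2\tau)$ and multiplying by $2h^2$.

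The first step is to expand $(|x|-|y|)^2 = x^2 + y^2 - 2|x||y|$, so the desired inequality is equivalent to $|x||y| \geq xy\cos(2h^2\tau)$. This is the key (and only) observation: since $|\cos(2h^2\tau)| \leq 1$, we have
\begin{equation*}
xy\cos(2h^2\tau) \leq |xy| = |x||y|,
\end{equation*}
which closes the estimate. Notice this bound is sharp precisely when $\cos(2h^2\tau) = \mathrm{sgn}(xy)$, which is consistent with Lemma~\ref{lem.1} being strictly stronger at $\tau$ where $\sin(2h^2\tau)$ is bounded away from $0$, while Lemma~\ref{lem.2} gains the crucial $\sin^{-2}$ factor that will be used in the oscillatory integral estimate leading to~\eqref{eq.final}.

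There is no real obstacle here; the entire content is the elementary inequality above, and the lemma is essentially a completion-of-squares restatement of the formula for $\partial_\tau \psi$ combined with the trivial bound on $\cos$. The role of this lemma in the larger argument is to provide, together with Lemma~\ref{lem.1}, a lower bound on $|\partial_\tau \psi|$ that will allow integration by parts in $\tau$ in the oscillatory integral representation of the kernel $K(x,y,h)$, producing the decay $\tfrac{Ch}{(|x|-|y|)^2}$ required in~\eqref{eq.final}.
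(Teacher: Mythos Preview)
Your proof is correct and follows exactly the same route as the paper: both start from~\eqref{eq.derivee} and reduce to the elementary inequality $\frac{x^2+y^2}{2} - xy\cos(2h^2\tau) \geq \frac{(|x|-|y|)^2}{2}$, i.e.\ $xy\cos(2h^2\tau)\leq |x||y|$. The paper's own proof is the same one-line observation.
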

\begin{proof}
Indeed, this estimate is a straightforward consequence of~\eqref{eq.derivee} and
$$
\frac { x^2 + y^2} 2 -xy \cos (2h^2 \tau) \geq \frac { x^2 + y^2} 2 -|xy|= \frac{ (|x|-|y|)^2} 2  \,.
$$
This completes the proof of Lemma~\ref{lem.2}.
\end{proof}
Let us now complete the proof of Lemma~\ref{lem.cont}. 
To estimate $K$ we integrate by parts using the operator 
$T= \frac 1{ \partial_\tau \psi (x,y,\tau,h)} \partial _\tau$. Notice that according to Lemma~\ref{lem.2}, the singularity of $\frac 1 { |\sin(2h^2 \tau)|^{1/2}}$ is harmless and we obtain
$$
|K(x,y,h)| \leq C \big| \int_{\tau\in \mathbb{R}} e^{ i \psi(x,y,\tau, h)} \partial_\tau 
\bigl( \frac 1 { |\sin(2h^2 \tau)|^{1/2}} \frac 1 {\partial_\tau \psi } 
\widehat{\phi} ( \tau) \bigr) d\tau\big|. 
$$
In the expression above, we have three contributions according whether the derivative falls on either terms. 
If the derivative falls on the last term, we obtain a contribution which is, 
according to Lemma~\ref{lem.2}, bounded by 
\begin{multline*}
C_N 
\int_{\tau\in \mathbb{R}} 
\frac{\sin^2(2h^2 \tau)} { |\sin|^{1/2}(2h^2 \tau)h^2 ( |x|-|y|)^2} (1+|\tau|)^{-N} 
d\tau\\
\leq C_N 
\int_{\tau\in \mathbb{R}} \frac {h^3 |\tau|^{3/2}} { h^2 ( |x|-|y|) ^2}
(1+|\tau|)^{-N}
d \tau
\leq \frac{Ch} {(|x|-|y|)^2}\,.
\end{multline*}
If the derivative falls on the first term, we obtain a contribution which is bounded by 
\begin{multline}
C\int_{\tau\in \mathbb{R}} 
\frac{ h^2 |\widehat{\phi} (\tau)| d\tau } { |\sin|^{3/2}(2h^2 \tau)|\partial_ \tau \psi|} 
\leq Ch \int_{\tau\in \mathbb{R}} \frac{|\sin|^{1/2}(2h^2 \tau)} { ( |x|-|y|)^2}|\widehat{\phi} (\tau)| d\tau\\
\leq C \int_{\tau\in \mathbb{R}} \frac{h |\tau|^{1/2}} { ( |x|-|y|)^2}|\widehat{\phi} (\tau)| d\tau\leq \frac{Ch} {(|x|-|y|)^2}\,.
\end{multline}
Finally, the last case is when the derivative falls on the second term. 
In this case, using the relation
\begin{equation*}
\frac{ \partial_\tau^2 \psi} { (\partial_\tau \psi)^2} = 
-\frac{4 h^2\cos (2h^2 \tau)} {\sin (2h^2 \tau) \partial _\tau \psi} 
+ \frac {4h^4 xy} {\sin (2h^2 \tau) (\partial _\tau \psi)^2}
\end{equation*}
and Lemma~\ref{lem.2}, we obtain a contribution which is bounded by 
\begin{multline}
C\int_{\tau\in \mathbb{R}} 
(\frac{ h^2} {|\sin|^{3/2} (2h^2 \tau) |\partial _\tau \psi| } + 
\frac{ h^4 |xy| } { |\sin|^{3/2} (2h^2 \tau)(\partial _\tau \psi)^2}) |\widehat{\phi} (\tau)|
d \tau \\
\leq C 
\int_{\tau\in \mathbb{R}} 
\big|\frac{|\sin|^{1/2}(2h^2 \tau)} 
{ ( |x|-|y|)^2}\widehat{\phi} (\tau)\big| d\tau\leq \frac{Ch} {(|x|-|y|)^2} 
\end{multline} 
where to estimate
$\frac{ h^4 xy } { \sin (2h^2 \tau)(\partial _\tau \psi)^2}$ we used Lemma~\ref{lem.1} 
to estimate one of the $\partial _\tau \psi$ factors and Lemma~\ref{lem.2} to estimate the other one.
This concludes the proof of~\eqref{eq.final} and hence of Lemma~\ref{lem.cont}. 
\end{proof} 
\section{Strichartz estimates}
We state the Strichartz inequality (local in time) satisfied by the free evolution $e^{-itH}$.
\begin{lemm}\label{mehler}
Let us fix $s\in \R$.
For every $p\geq 4,q\geq 2$ satisfying $\frac 2p+\frac 1q=\frac 12$, every $T>0$, there exists $C>0$ and such that\begin{equation}\label{strich1}
\|e^{-itH}\|_{{\mathcal H}^s(\R)\rightarrow L^p((0, 2\pi);{\mathcal W}^{s,q}(\R))}\leq C.
\end{equation}
\end{lemm}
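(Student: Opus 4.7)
The strategy is to deduce the Strichartz estimate from the Mehler kernel representation~\eqref{bg1} by running the classical $TT^*$ argument on the finite time interval $(0,2\pi)$.

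First, I would reduce to $s=0$. Since $e^{-itH}$ commutes with $H^{s/2}$ and both $\mathcal{H}^s$ and $\mathcal{W}^{s,q}$ are defined as pull-backs of $L^2$ and $L^q$ under $H^{s/2}$, it suffices to prove
\begin{equation*}
\|e^{-itH}f\|_{L^p((0,2\pi);L^q(\R))}\leq C\|f\|_{L^2(\R)},
\end{equation*}
for admissible $(p,q)$, and then apply this to $H^{s/2}f$. Next, from~\eqref{bg1} one reads off the dispersive bound
\begin{equation*}
\|e^{-itH}f\|_{L^{\infty}(\R)}\leq \frac{1}{|\sin(2t)|^{1/2}}\|f\|_{L^1(\R)},\qquad 2t\notin \pi \Z,
\end{equation*}
which interpolated with the $L^2$ unitarity of $e^{-itH}$ gives, for every $q\in[2,\infty]$,
\begin{equation*}
\|e^{-itH}f\|_{L^q(\R)}\leq \frac{C}{|\sin(2t)|^{1/2-1/q}}\|f\|_{L^{q'}(\R)}.
\end{equation*}

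Next, I would perform the $TT^*$ reduction. Setting $T:f\mapsto e^{-itH}f$, the bound $\|Tf\|_{L^p_tL^q_x}\leq C\|f\|_{L^2}$ is equivalent (by duality) to
\begin{equation*}
\|TT^*G\|_{L^p_tL^q_x}\leq C\|G\|_{L^{p'}_tL^{q'}_x},\qquad TT^*G(t)=\int_0^{2\pi}e^{-i(t-s)H}G(s)\,\dd s.
\end{equation*}
Applying Minkowski and then the dispersive estimate inside the integral, and using the admissibility $1/2-1/q=2/p$, yields
\begin{equation*}
\|TT^*G(t,\cdot)\|_{L^q_x}\leq C\int_0^{2\pi}\frac{\|G(s,\cdot)\|_{L^{q'}_x}}{|\sin(2(t-s))|^{2/p}}\,\dd s.
\end{equation*}
It remains to show that the scalar convolution operator with kernel $|\sin(2\tau)|^{-2/p}$ is bounded from $L^{p'}(0,2\pi)$ to $L^p(0,2\pi)$. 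Since $2/p\in(0,1/2]$ the kernel is locally integrable, with singularities only at $\tau\in\frac{\pi}{2}\Z$; decomposing $(0,2\pi)\times(0,2\pi)$ into finitely many sub-rectangles on which $t-s$ stays within a fixed interval of length $<\pi/2$ around one singularity, the kernel is controlled pointwise by $C|t-s-k\pi/2|^{-2/p}$ for a single $k$, and the desired bound follows from the one-dimensional Hardy--Littlewood--Sobolev inequality with exponent $\alpha=2/p\in(0,1)$, satisfying $1/p'-1/p=1-\alpha$.

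\textbf{Main obstacle.} The only genuinely delicate point is the handling of the kernel $|\sin(2(t-s))|^{-2/p}$ on the full interval $(0,2\pi)$, where it has several interior singularities at $t-s\in\frac{\pi}{2}\Z$; if one tries to apply HLS globally, one has to verify either a weak-type $L^{p/2}$ bound on the whole interval or reduce to the pure power kernel by localising. Once this is done (and the endpoint $p=4$, $q=\infty$ is the borderline case $\alpha=1/2$, still non-endpoint HLS), the rest of the argument is the standard $TT^*$ scheme adapted to the Hermite setting via the functional calculus of the previous section.
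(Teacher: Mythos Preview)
Your proposal is correct and follows essentially the same approach as the paper: reduce to $s=0$ by the definition of $\mathcal{W}^{s,q}$, read off the dispersive estimate from the Mehler kernel~\eqref{bg1}, and then invoke the standard $TT^*$/Hardy--Littlewood--Sobolev machinery. The paper's own proof is terser---it simply notes that the $L^1\to L^\infty$ singularity near $t=0$ matches that of $e^{it\partial_x^2}$ and cites \cite{GV} for the resulting local Strichartz estimates---whereas you carry out the $TT^*$ argument explicitly on the full interval $(0,2\pi)$ and deal directly with the finitely many interior singularities of $|\sin(2\tau)|^{-2/p}$; both routes lead to the same conclusion.
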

There is also a set of inhomogeneous Strichartz estimates which will not be used here.
\begin{proof}
Coming back to the definition of the spaces ${\mathcal W}^{s,p}(\R)$, we first observe that
it suffices to consider the case $s=0$.
We have that $
\|e^{-itH}\|_{L^2\rightarrow L^2}=1.
$
Next, as a consequence of (\ref{bg1}),
$
\|e^{-itH}\|_{L^1\rightarrow L^\infty}\leq C/|t|^{1/2}
$ 
for $t$ close to zero, i.e. the singularity of $\|e^{-itH}\|_{L^1\rightarrow L^\infty}$ for $t\sim 0$
is the same as for $\exp(it\partial_x^2)$ and thus (see e.g. \cite{GV}) $e^{-itH}$ enjoys the same 
local in time Strichartz estimates as the free 
Schr\"odinger equation which is precisely the statement of \eqref{strich1}.
This completes the proof of Lemma~\ref{mehler}.
\end{proof}
We need some stochastic improvements of the Strichartz estimates.
\noindent The following lemma shows that there is a gain of regularity in 
$L^{p}$ spaces for the free Schr\"odinger solution.
\begin{lemm}\label{lemm2}
Let $\eps <\frac 1 6$. For any $p,q\geq 4$, there exist $C,c>0$ such that 
\begin{equation*}
\\\begin{gathered}
\forall\, \lambda\geq 1,\, \forall\,  N\geq 1,\;
{\mu} (u\in \mathcal{H}^{- \sigma}\,:\, \|e^{-itH}u\|_{L^{p}_{(0,2\pi)}{\mathcal
W}^{\eps ,q}(\R)}>\lambda)\leq Ce^{-c\lambda^2}\,\\
\forall\, \lambda\geq 1,\, \forall\,  N\geq 1,\; \tilde {\mu} _N(u\in E_N\,:\, \|e^{-itH}u\|_{L^{p}_{(0,2\pi)}{\mathcal
W}^{\eps ,q}(\R)}>\lambda)\leq Ce^{-c\lambda^2}\,.
\end{gathered}
\end{equation*}
\end{lemm}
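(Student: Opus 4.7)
The plan is to follow the template of the proof of Lemma~\ref{ld}, the only new input being that the free evolution $e^{-itH}$ simply rotates each Fourier--Hermite coefficient $g_n(\omega)$ by the phase $e^{-it\lambda_n^2}$, which is harmless once the norm $L^r_\omega$ is brought innermost.

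Concretely, writing
$$
e^{-itH}u(x)=\sum_{n=0}^\infty \frac{\sqrt 2}{\lambda_n} e^{-it\lambda_n^2} g_n(\omega)\, h_n(x)
$$
(with the sum truncated at $n=N$ in the $\tilde\mu_N$ case), one has, by the definition of $\mathcal W^{\varepsilon,q}$,
$$
H^{\varepsilon/2}e^{-itH}u(x)=\sum_{n=0}^\infty \frac{\sqrt 2}{\lambda_n^{1-\varepsilon}} e^{-it\lambda_n^2} g_n(\omega)\, h_n(x).
$$
First I would fix $r\geq \max(p,q)$ and apply the Minkowski inequality to get
$$
\|e^{-itH}u\|_{L^r_\omega L^p_{(0,2\pi)}\mathcal W^{\varepsilon,q}_x}\leq \|H^{\varepsilon/2}e^{-itH}u\|_{L^p_{(0,2\pi)}L^q_x L^r_\omega}.
$$
For each fixed $(t,x)$, the Khinchin-type bound (Lemma~\ref{Gaussian}) yields, since $|e^{-it\lambda_n^2}|=1$,
$$
\bigl\|H^{\varepsilon/2}e^{-itH}u(x)\bigr\|_{L^r_\omega}\leq C\sqrt r\,\Bigl(\sum_{n=0}^\infty \frac{2}{\lambda_n^{2(1-\varepsilon)}}|h_n(x)|^2\Bigr)^{1/2},
$$
which is independent of $t$; the $L^p_{(0,2\pi)}$ norm therefore only contributes a harmless factor $(2\pi)^{1/p}$.

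Next I would estimate the spatial function $F(x)=\bigl(\sum_n \lambda_n^{-2(1-\varepsilon)}|h_n(x)|^2\bigr)^{1/2}$ in $L^q_x$ by the triangle inequality in $L^{q/2}_x$ (using $q\geq 4\geq 2$) together with the Koch--Tataru / Yajima--Zhang bound $\|h_n\|_{L^q}\leq C\lambda_n^{-1/6}$ from Lemma~\ref{disp}:
$$
\|F\|_{L^q_x}^2\leq \sum_{n=0}^\infty \frac{2}{\lambda_n^{2(1-\varepsilon)}}\|h_n\|_{L^q}^2\leq C\sum_{n=0}^\infty \frac{1}{\lambda_n^{2(1-\varepsilon)+1/3}}<+\infty,
$$
where convergence holds precisely because $\lambda_n^2\sim 2n+1$ and $\varepsilon<1/6$. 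Combining the three steps gives, uniformly in $N$ and $t$,
$$
\|e^{-itH}u\|_{L^r_\omega L^p_{(0,2\pi)}\mathcal W^{\varepsilon,q}_x}\leq C\sqrt r.
$$
Finally, the Bienaym\'e--Tchebichev inequality gives
$$
{\bf p}\bigl(\|e^{-itH}u\|_{L^p_{(0,2\pi)}\mathcal W^{\varepsilon,q}_x}>\lambda\bigr)\leq (C\sqrt r/\lambda)^r,
$$
and optimizing by choosing $r=\delta\lambda^2$ with $\delta$ small enough yields the desired bound $Ce^{-c\lambda^2}$. The same computation applies verbatim to $\tilde\mu_N$, the only difference being that the series is truncated at $n=N$, which only improves the constants.

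There is no real obstacle: the essential point is that the free evolution preserves the Gaussian structure of the random series (rotation of phases does not alter the distribution of a complex Gaussian), so the time variable factors out of the probabilistic estimate and the statement reduces to the $t=0$ case already treated in Lemma~\ref{ld}.
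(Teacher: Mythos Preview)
Your argument is correct and follows essentially the same route as the paper: Minkowski to bring $L^r_\omega$ innermost, the Khinchin bound (Lemma~\ref{Gaussian}) pointwise in $(t,x)$, the dispersive estimate $\|h_n\|_{L^q}\leq C\lambda_n^{-1/6}$ of Lemma~\ref{disp} to sum the series, and finally Bienaym\'e--Tchebychev with the optimized choice $r\sim\delta\lambda^2$. Your remark that the phases $e^{-it\lambda_n^2}$ drop out after the Gaussian bound, so that the $L^p_t$ norm contributes only a constant, is exactly the mechanism (left implicit in the paper) by which the argument of Lemma~\ref{ld} carries over unchanged.
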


\begin{proof}
Let us prove the first estimate, the proof of the second being similar. By the definition of ${\mu}$, we have to show that 
\begin{equation}\label{p}
{\bf p}\big(\om \in \Omega \,:\,\| \e^{-itH}\phi\|_{L^{p}_{(0,2\pi)}{\mathcal
W}^{\eps, q}(\R)}>\lambda\big)\leq Ce^{-c\lambda^2}\,.
\end{equation}
Now by Lemmas \ref{Gaussian}, \ref{disp} and Minkowski's inequality, for $r\geq p,q$, we obtain
\begin{eqnarray}\label{series}
\|\e^{-itH}\phi(\omega, \cdot)\|_{L^{r}(\Omega)L^p_{(0,2\pi)}\W^{{\eps}, q}(\R)}&\leq&
C\|\<H\>^{\frac{\eps}2}\e^{-itH}\phi\|_{L^p_{(0,2\pi)}L^{q}(\R)L^{r}(\Omega)}\nonumber \\\nonumber 
&\leq&C \sqrt{r} \big(\sum_{n= 0}^{\infty}\lambda_{n}^{2(\eps-1)}\|h_{n}\|_{L^{q}}^{2}\big)^{\frac12}\\
&\leq&C \sqrt{r}\big(\sum_{n=0}^\infty\lambda_{n}^{2(\eps-1-\frac1{6})}\big)^{\frac12}.
\end{eqnarray}
Coming back to the definition of $\lambda_n$, we get that the sum \eqref{series} is finite.
The estimate \eqref{p} then follows from the Bienaym\'e-Tchebychev inequality :
\begin{multline*}
{\bf p}\big(\,\om \in \Omega \,:\,\| \e^{-itH}\phi\|_{L^{p}_{(0,2\pi)}{\mathcal
W}^{\eps, q}(\R)}> \lambda\, \big)\\
 \leq  \bigl(
\frac{\|\e^{-itH}\phi\|_{L^{r}(\Omega)L^p_{(0,2\pi)}\W^{\eps, q}(\R)}}{ \lambda}
\bigr) ^{r}
 \leq \big(\frac{ C \sqrt{r}} {\lambda} \big)^r,
\end{multline*}
and the choice $r= \epsilon \lambda ^2$ with $\epsilon >0$ small enough.
This completes the proof of Lemma~\ref{lemm2}. 
\end{proof}
We shall in practice need the following consequence of Lemma~\ref{lemm2} and \eqref{normehs}.
\begin{lemm}\label{lemmrho}
Let $\s>0$, $0<\eps<\frac16$ and $p,q\geq 4$. Then there exist $C,c>0$ so that for 
every $\lambda\geq 1$, every $N\geq 1$,
\begin{equation}\label{dev1}
\begin{gathered}
\rho \big(\,u\in \H^{-\s}\,:\,\|u\|_{\H^{-\s}}>\lambda\,\big)\leq C\e^{-c\lambda^{2}},\\
\tilde\rho_{N}\big(\,u\in E_N\,:\,\|u\|_{\H^{-\s}}>\lambda\,\big)\leq C\e^{-c\lambda^{2}}
\end{gathered}
\end{equation}
and 
\begin{equation}\label{dev2}
\begin{gathered}
\rho \big(\,u\in \H^{-\s}\,:\,\|\e^{-itH}u\|_{L^{p}_{(0,2\pi)}\W^{\eps,q}(\R)}>\lambda\,\big)\leq 
C\e^{-c\lambda^{2}},\\
\tilde\rho_{N}\big(\,u\in E_N\,:\,\|\e^{-itH}u\|_{L^{p}_{(0,2\pi)}\W^{\eps,q}(\R)}>\lambda\,\big)\leq 
C\e^{-c\lambda^{2}}.
\end{gathered}
\end{equation}
\end{lemm}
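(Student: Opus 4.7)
The approach is to transfer the Gaussian tail bounds for $\mu$ and $\tilde\mu_N$ (already established in Lemmas~\ref{lem.3.4} and~\ref{lemm2}) to the weighted measures $\rho$ and $\tilde\rho_N$ via the $L^p(d\mu)$-integrability of the densities, at the cost of halving the exponential rate $c$.

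In the defocusing case, the density of $\rho$ (resp.\ $\tilde\rho_N$) with respect to $\mu$ (resp.\ $\tilde\mu_N$) is $\exp(-\tfrac{1}{k+1}\|u\|_{L^{k+1}}^{k+1}) \leq 1$ (resp.\ with $S_N u$ inside), so trivially $\rho(A)\leq \mu(A)$ and $\tilde\rho_N(A)\leq \tilde\mu_N(A)$ for every Borelian $A$. Both \eqref{dev1} and \eqref{dev2} then follow at once from the corresponding $\mu$ and $\tilde\mu_N$ bounds, with the same constants.

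In the focusing case $k=3$, Proposition~\ref{prop.lp} gives $\sup_N \|G_N\|_{L^2(d\mu)}\leq C$, and Fatou's lemma (as used in the proof of Theorem~\ref{prop}(ii)) upgrades this to $\|G\|_{L^2(d\mu)}\leq C$ for the limit density. Since $G_N$ depends only on $\Pi_N u$ and $\mu=\mu^N\otimes\tilde\mu_N$, Fubini yields $\sup_N \|G_N\|_{L^2(d\tilde\mu_N)}\leq C$ as well. Then by Cauchy--Schwarz, for any Borelian $A$,
$$
\rho(A)=\int_A G(u)\,d\mu(u)\leq \|G\|_{L^2(d\mu)}\,\mu(A)^{1/2}\leq C\,\mu(A)^{1/2},
$$
and similarly $\tilde\rho_N(A)\leq C\,\tilde\mu_N(A)^{1/2}$. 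Applying these to each of the four events appearing in \eqref{dev1}--\eqref{dev2} and plugging in the Gaussian estimates from Lemmas~\ref{lem.3.4} and~\ref{lemm2}, one obtains the bound $C\,\e^{-c\lambda^{2}/2}$, which is the claimed bound after replacing $c$ by $c/2$.

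There is no real obstacle: the Cauchy--Schwarz trick is the standard way to push a Gaussian deviation bound through an $L^2$-integrable density, and the required uniform integrability has been set up precisely by Proposition~\ref{prop.lp} (together with Fatou at the limit). The only small point worth noting is that $G_N$ factors through $\Pi_N u$, which is what allows the uniform $L^2(d\mu)$ bound on $G_N$ to descend to a uniform $L^2(d\tilde\mu_N)$ bound via the product decomposition of $\mu$; once this is observed, the statement reduces to a one-line application of Cauchy--Schwarz.
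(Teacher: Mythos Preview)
Your proof is correct and follows essentially the same route as the paper: the defocusing case is trivial since the density is $\leq 1$, and in the focusing case one uses Cauchy--Schwarz together with the uniform $L^2(d\mu)$ bound on the densities from Proposition~\ref{prop.lp}. The only cosmetic difference is that the paper bounds $\rho_N(A)$ first and then passes to $\rho(A)$ via \eqref{limite}, whereas you go directly through $\|G\|_{L^2(d\mu)}$ obtained by Fatou; your remark that $G_N$ factors through $\Pi_N u$ (so that $\|G_N\|_{L^2(d\mu)}=\|G_N\|_{L^2(d\tilde\mu_N)}$) is exactly what is needed for the $\tilde\rho_N$ estimates.
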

\begin{proof}
In the defocusing case the proof is a straightforward consequence of the bounds for $\mu, \tilde{\mu} _N$ 
we have already established. 
Namely, in this case it is a straightforward consequence of the inequalities
$$\rho(A) \leq \mu(A), \qquad \tilde \rho_N(A)\leq \tilde{\mu}_N(A).$$
We thus only consider the focusing case which is slightly more delicate.
We prove \eqref{dev1}. By the definition and the Cauchy-Schwarz inequality, we have
\begin{equation*}
\begin{aligned}\rho_{N}\big(\,u\in \H^{-\s}\,:\,\|u\|_{\H^{-\s}}>\lambda\,\big)
&=\int_{\mathcal{H}^{- \sigma}} 1_{\|u\|_{\mathcal{H}^{-\sigma}} > \lambda} G_N(u) d\mu(u)\\
&\leq \|G_{N}(u)\|_{L^{2}(\text{d}\mu(u))} \,\mu\big(u: \|u\|_{\H^{-\s}}>\lambda\,\big)^{\frac12},
\end{aligned}
\end{equation*}
and we obtain 
$$\rho_N \big(\,u\in \H^{-\s}\,:\,\|u\|_{\H^{-\s}}>\lambda\,\big)\leq C\e^{-c\lambda^{2}},$$
and the first claim follows by using~\eqref{limite}. The proof of the three other claims are similar. This completes the proof of Lemma~\ref{lemmrho}.
\end{proof}
\section{Local smoothing effects}
The next result is based on the well-known smoothing effect.
\begin{lemm}[Deterministic smoothing effect]\label{gl}
Let us fix two positive numbers $s$ and $\sigma$ such that $s<\sigma<1/2$. Then there exists $C>0$ so that 
\begin{equation}\label{kiwi}
\big\|\<x\>^{-\sigma}\,\sqrt{H}^{s}\,\e^{-itH}f\big\|_{L^{2}([0,2\pi]\times \R)} \leq C \|f\|_{L^{2}(\R)}.
\end{equation}
\end{lemm}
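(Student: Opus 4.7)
The plan is to exploit the explicit spectral decomposition of $H$ together with the crucial arithmetic feature that the eigenvalues $\lambda_n^2 = 2n+1$ are all odd integers, so that differences $\lambda_n^2 - \lambda_m^2 = 2(n-m)$ lie in $2\Z$. This will make the time integral on $[0,2\pi]$ produce exact orthogonality, collapsing the double sum coming from the eigenfunction expansion onto the diagonal.

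First I would write $f = \sum_{n\geq 0} c_n h_n$ with $\sum |c_n|^2 = \|f\|_{L^2}^2$, so that
$$
\sqrt{H}^s\, e^{-itH} f(x) = \sum_{n\geq 0} \lambda_n^s\, c_n\, e^{-it\lambda_n^2} h_n(x).
$$
Expanding the square and integrating in $t$ first, the identity
$
\int_0^{2\pi} e^{-2it(n-m)}\, \dd t = 2\pi\, \delta_{nm}
$
kills every off-diagonal contribution, leaving
$$
\bigl\|\langle x\rangle^{-\sigma} \sqrt{H}^s e^{-itH}f\bigr\|_{L^2([0,2\pi]\times\R)}^2
= 2\pi \sum_{n\geq 0} \lambda_n^{2s}\, |c_n|^2 \int_{\R} \langle x\rangle^{-2\sigma} |h_n(x)|^2\, \dd x.
$$
At this point it suffices to establish the uniform weighted bound
\begin{equation}\label{eq.weighted-hermite}
\int_{\R} \langle x\rangle^{-2\sigma} |h_n(x)|^2\, \dd x \leq C\, \lambda_n^{-2\sigma}, \qquad n\geq 0,
\end{equation}
since combining this with $s<\sigma$ yields $\lambda_n^{2s}\lambda_n^{-2\sigma}\leq 1$ and hence the desired bound by $C\|f\|_{L^2}^2$.

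The main (and only) obstacle is therefore to prove \eqref{eq.weighted-hermite} for $\sigma<1/2$. Outside the classical region $|x|\geq \lambda_n$ the Hermite functions decay exponentially, so that part contributes at most $C\lambda_n^{-N}$ for any $N$. In the bulk $|x|\leq (1-\eta)\lambda_n$, the classical pointwise bound $|h_n(x)| \leq C\lambda_n^{-1/2}(1 - x^2/\lambda_n^2)^{-1/4}$ gives, after the change of variables $x = \lambda_n y$,
$$
\int_{|x|\leq (1-\eta)\lambda_n} \langle x\rangle^{-2\sigma}|h_n(x)|^2\, \dd x
\leq C\, \lambda_n^{-2\sigma}\int_{|y|\leq 1-\eta}\frac{|y|^{-2\sigma}\, \dd y}{\sqrt{1-y^2}},
$$
where the last integral is finite precisely because $\sigma<1/2$. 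Finally the contribution of the transition zone $|x|\sim \lambda_n$ is handled via the Airy-type estimate $\|h_n\|_{L^\infty}\leq C\lambda_n^{-1/6}$ from Lemma~\ref{disp}, which majorises that piece by $C\lambda_n^{-1/3}\cdot \lambda_n^{-2\sigma+1}\cdot \lambda_n^{-1}$; since $\sigma<1/2$ this is again $O(\lambda_n^{-2\sigma})$. Combining the three regions yields \eqref{eq.weighted-hermite} and completes the proof.
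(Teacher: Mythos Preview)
Your reduction via the $2\pi$-periodicity of $e^{-itH}$ to the single-eigenfunction weighted bound is exactly the mechanism behind the reference \cite{Thomann6} that the paper invokes at the end of its proof, so the overall architecture coincides with the paper's. The difference lies only in how the weighted bound $\|\langle x\rangle^{-\sigma}h_n\|_{L^2}\leq C\lambda_n^{-\sigma}$ is obtained: the paper derives the slightly weaker (but sufficient) estimate $\|\langle x\rangle^{-\sigma}h_n\|_{L^2}\leq C\lambda_n^{-s-\delta}$ by starting from the standard local smoothing inequality $\|\langle x\rangle^{-\alpha}\sqrt{H}^{1/2}e^{-itH}f\|_{L^2_{t,x}}\leq C\|f\|_{L^2}$ with $\alpha>1/2$ (quoted from \cite{YajimaZhang1,YajimaZhang2}), specializing to $f=h_n$, and interpolating with $\|h_n\|_{L^2}=1$. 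Your route through pointwise Hermite asymptotics is more self-contained and yields the sharp exponent, at the cost of importing the WKB/Airy bounds.

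There is, however, an error in your transition-zone bookkeeping. On the region $(1-\eta)\lambda_n\leq |x|\leq \lambda_n$, which has length $\sim\lambda_n$, using only $\|h_n\|_{L^\infty}^2\leq C\lambda_n^{-1/3}$ and $\langle x\rangle^{-2\sigma}\leq C\lambda_n^{-2\sigma}$ gives a contribution of order $\lambda_n^{-1/3}\cdot\lambda_n^{-2\sigma}\cdot\lambda_n=\lambda_n^{2/3-2\sigma}$, which is \emph{not} $O(\lambda_n^{-2\sigma})$; the extra factor $\lambda_n^{-1}$ in your displayed product is unaccounted for. The fix is standard: the WKB bound $|h_n(x)|\leq C\lambda_n^{-1/2}(1-x^2/\lambda_n^2)^{-1/4}$ is in fact valid up to $|x|\leq \lambda_n-C\lambda_n^{-1/3}$, and since $(1-y^2)^{-1/2}$ is integrable on $[0,1)$ you may let $\eta\to 0$ in the bulk estimate. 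What remains is only the genuine Airy window $\big||x|-\lambda_n\big|\lesssim\lambda_n^{-1/3}$, whose length $\lambda_n^{-1/3}$ now produces the harmless contribution $\lambda_n^{-1/3}\cdot\lambda_n^{-2\sigma}\cdot\lambda_n^{-1/3}=\lambda_n^{-2/3-2\sigma}$.
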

\begin{proof}
Inequality (\ref{kiwi}) is a slight variation of the ``usual'' local smoothing effect 
for the harmonic oscillator, namely for $\alpha>1/2$,
\begin{equation}\label{gladak}
\big\|\<x\>^{-\alpha}\,\sqrt{H}^{\frac1 2}\,\e^{-itH}f\big\|_{L^{2}([0,2\pi]\times \R)} \leq C \|f\|_{L^{2}(\R)}.
\end{equation}
We refer to \cite{YajimaZhang1,YajimaZhang2} for a proof of (\ref{gladak}).
Let us fix $\alpha>1/2$ such that $1<2\alpha<\sigma/s$. Take $\theta\in (0,1)$ such that $\sigma=\theta\alpha$. 
Then thanks to our choice of $\alpha$, we have that $s<\frac{\theta}{2}$.
Applying (\ref{gladak}) to $h_n$ gives that 
$$
\|\<x\>^{-\alpha}\,h_{n}(x)\|_{L^{2}(\R)}\leq C \lambda_{n}^{-\frac12}\,.
$$
Interpolation between the last inequality and the 
equality $\|h_{n}\|_{L^{2}(\R)}=1$ yields that 
\begin{equation*}
\|\<x\>^{-\sigma}\,h_{n}(x)\|_{L^{2}(\R)}\leq C \lambda_{n}^{-\frac{\theta}{2}}.
\end{equation*}
Since $s<\frac{\theta}{2}$, we obtain that there exists $\delta(s,\sigma)>0$ such that
\begin{equation}\label{kkk}
\|\<x\>^{-\sigma}\,h_{n}\|_{L^{2}(\R)}\leq C \lambda_{n}^{-\delta(s,\sigma)-s}.
\end{equation}
The last estimate in conjugation with \cite[Corollary 1.2]{Thomann6} implies (\ref{kiwi}) (notice that here we
do not need the $\delta(s,\sigma)$ saving in \eqref{kkk}).
This completes the proof of Lemma~\ref{gl}.
\end{proof}
We also have the following stochastic improvement of the smoothing effect.
\begin{lemm}[Stochastic smoothing effect]\label{lem.smooth}
Let $s,\sigma$ be two positive numbers  such that $s<\sigma<1/2$
and $q\geq 2$. Then there exist $C,c>0$ so that for every $\lambda>0$, every 
 $N\geq 1$, 
\begin{equation}\label{eq.lem.smooth}
\begin{gathered}
\rho \big(\,u\in \H^{-\s} \;:\; 
\big\|\,\<x\>^{-\sigma}\,
\sqrt{H}^{s}\,\e^{-itH}u\,\big\|_{L^{q}_{(0,2\pi)}L^{2}(\R)} > \lambda \, ) \leq C\e^{-c\lambda^{2}},\\
\tilde \rho_{N}\big(\,u\in E_N\;:\; 
\big\|\,\<x\>^{-\sigma}\,
\sqrt{H}^{s}\,\e^{-itH}u\,\big\|_{L^{q}_{(0,2\pi)}L^{2}(\R)} > \lambda \, ) \leq C\e^{-c\lambda^{2}}.
\end{gathered}
\end{equation}
\end{lemm}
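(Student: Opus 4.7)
My plan is to first establish the Gaussian-tail bound under $\mu$ and $\tilde\mu_N$ by a Khinchin-type computation analogous to Lemma~\ref{lemm2}, and then transfer the bound to $\rho$ and $\tilde\rho_N$ exactly as in Lemma~\ref{lemmrho}, via Cauchy--Schwarz and the uniform $L^2$-integrability of $G_N$ provided by Proposition~\ref{prop.lp}.

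For the bound under $\mu$, I would expand $u$ in the Hermite basis, writing $u=\sum_{n\geq 0}\frac{\sqrt 2}{\lambda_n}g_n(\omega)h_n$, so that
\[
\<x\>^{-\sigma}\sqrt{H}^{s}\e^{-itH}u(x)=\sum_{n\geq 0}\frac{\sqrt 2}{\lambda_n^{1-s}}\,g_n(\omega)\,\e^{-it\lambda_n^2}\,\<x\>^{-\sigma}h_n(x).
\]
For any $r\geq q\geq 2$, two successive applications of Minkowski's inequality bring the $L^r(\Omega)$ norm innermost; Lemma~\ref{Gaussian} applied pointwise in $(t,x)$, combined with the fact that $|\e^{-it\lambda_n^2}|=1$ eliminates any time dependence, yields
\[
\|\<x\>^{-\sigma}\sqrt{H}^{s}\e^{-itH}u\|_{L^r(\Omega)L^q_{(0,2\pi)}L^2(\R)}\leq C\sqrt{r}\,(2\pi)^{1/q}\Bigl(\sum_{n\geq 0}\frac{2}{\lambda_n^{2(1-s)}}\|\<x\>^{-\sigma}h_n\|_{L^2}^{2}\Bigr)^{1/2}.
\]

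The crucial input is the refined eigenfunction estimate~\eqref{kkk} proved inside Lemma~\ref{gl}: under the hypothesis $s<\sigma<1/2$, there exists $\delta=\delta(s,\sigma)>0$ with $\|\<x\>^{-\sigma}h_n\|_{L^2}\leq C\lambda_n^{-s-\delta}$. Plugging this in, the remaining series reduces to $\sum_n\lambda_n^{-2(1+\delta)}\sim \sum_n(2n+1)^{-(1+\delta)}$, which is finite; the $\delta$-saving is essential here, since the naive bound $\|\<x\>^{-\sigma}h_n\|_{L^2}\lesssim\lambda_n^{-s}$ would only give the divergent harmonic sum $\sum_n 1/\lambda_n^2$. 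Chebyshev's inequality with $r=\eps\lambda^2$ and $\eps$ small enough then yields $\mu(\|\cdots\|_{L^qL^2}>\lambda)\leq C\e^{-c\lambda^2}$, and the same computation truncated to $n\leq N$ produces the analogous bound for $\tilde\mu_N$ with uniform constants.

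Finally, to pass from Gaussian to Gibbs, the defocusing case is immediate since the densities are bounded by $1$ (so $\rho\leq\mu$ and $\tilde\rho_N\leq\tilde\mu_N$ as set functions). In the focusing case I would argue exactly as in Lemma~\ref{lemmrho}: denoting the event by $A_\lambda$, Cauchy--Schwarz together with the uniform bound $\|G_N\|_{L^2(\dd\mu)}\leq C$ of Proposition~\ref{prop.lp} (and the fact that $G_N$ depends only on $\Pi_N u$) yields $\tilde\rho_N(A_\lambda)\leq C\,\tilde\mu_N(A_\lambda)^{1/2}\leq C\e^{-c\lambda^2/2}$, which is the $\tilde\rho_N$ claim. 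The same reasoning produces $\rho_N(A_\lambda)\leq C\e^{-c\lambda^2/2}$, and the weak convergence~\eqref{limite} applied to the Borel set $A_\lambda$ then transfers the bound to $\rho$. The only genuine obstacle beyond routine Khinchin manipulations is the $\delta$-saving in~\eqref{kkk}, i.e.\ the precise decay of the weighted Hermite functions coming from the deterministic local smoothing effect; everything else is a direct recycling of the techniques of Lemmas~\ref{lemm2} and~\ref{lemmrho}.
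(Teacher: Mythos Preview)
Your proposal is correct and follows essentially the same approach as the paper's own proof: expand in the Hermite basis, apply the Khinchin inequality pointwise, use Minkowski to reduce to the series $\sum_n \lambda_n^{-2(1-s)}\|\langle x\rangle^{-\sigma}h_n\|_{L^2}^2$, invoke the $\delta$-saving in \eqref{kkk} to make it summable, then Chebyshev with $r\sim\lambda^2$; the transfer from $\mu$ to $\rho$ is done exactly as in Lemma~\ref{lemmrho}. The only cosmetic difference is that the paper applies Lemma~\ref{Gaussian} first and then Minkowski, while you phrase it as Minkowski first and then Lemma~\ref{Gaussian}, but these are equivalent once $r\geq q\geq 2$.
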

\begin{proof}
Again we only prove the first claim. We compute 
\begin{equation*}
\<x\>^{-\sigma}\,\sqrt{H}^{s}\,
\e^{-itH}
\varphi(\omega,x) = 
\sum_{n\geq 0}\frac{\sqrt{2}}{\lambda^{1-s}_{n}}
\e^{-it\lambda_{n}^{2}}g_{n}(\om) \frac1{\<x\>^{\sigma}}h_{n}(x).
\end{equation*}
Then by Lemma~\ref{Gaussian}
\begin{equation*}
\big\|\,\<x\>^{-\sigma}\,\sqrt{H}^{s}\,\e^{-itH}\varphi(\omega,x) \,\big\|_{L^{r}(\Omega)}
\leq C\sqrt{r}\,\big( \sum_{n\geq 0}\frac{1}{\<\lambda_n\>^{2(1-s)}}\,
\big|\frac{h_{n}(x)}{\<x\>^{\sigma}}\big|^{2}\big)^{\frac12}.
\end{equation*}
An application of the Minkowski inequality and \eqref{kkk} give 
\begin{equation*}
\begin{aligned}
\big\|\,
\<x\>^{-\sigma}\,\sqrt{H}^{s}\,\e^{-itH}
\varphi(\omega,x)\, \big\|_{L^{r}(\Omega ;L^q_{T}L^{2}(\R))}&\leq C\sqrt{r}\,
\big( \sum_{n\geq 0}\frac{1}{\<\lambda_n\>^{2(1-s)}}\,
\big\|\frac{h_{n}}{\<x\>^{\sigma}}\big\|_{L^{2}(\R)}^{2}\big)^{\frac12}\\
&\leq C\sqrt{r}\,\big( \sum_{n\geq 0}\frac{1}{\<\lambda_n\>^{2+2\delta(s,\sigma)}}\big)^{\frac12}\\
&\leq C \sqrt{r}.
\end{aligned}
\end{equation*}
Using the Tchebychev inequality, as we did in the proof of Lemma~\ref{lemm2} yields
\begin{equation}\label{eq.lem.smooth.mu}
\mu\big(\,u\in \mathcal{H}^{-\sigma} \;:\; 
\big\|\,\<x\>^{-\sigma}\,
\sqrt{H}^{s}\,\e^{-itH}u\,\big\|_{L^{q}_{(0,2\pi)}L^{2}(\R)} > \lambda \, ) \leq C\e^{-c\lambda^{2}}.
\end{equation}
Finally we deduce \eqref{eq.lem.smooth} 
from \eqref{eq.lem.smooth.mu} as we did in the proof of Lemma~\ref{lemmrho}. 
This completes the proof of Lemma~\ref{lem.smooth}.
\end{proof}
\section{Local in time results for the nonlinear problem}
In this section, we use the linear dispersive estimates established in the previous sections to develop a local Cauchy theory. As the solution we are looking for, will be the sum of the linear solution associated to our initial data and of a smoother term, our functional  spaces are naturally {\em the sum  of two spaces}: one which corresponds to the properties of the linear probabilistic solutions, and the other one corresponding to the properties of the deterministic smoother solutions. Fortunately, it turns out that these two spaces have a non trivial intersection which is sufficient to perform the analysis and hence avoid the technicalities involving sum spaces.  
\subsection{Initial data spaces}
For $\alpha\in\R$, we define the spaces ${\mathcal H}^{s}_{\langle x \rangle^{\alpha}}(\R)$ equipped with the
norm
$$
\|u\|_{{\mathcal H}^{s}_{\langle x \rangle^{\alpha}}}=
\|\langle x \rangle^{\alpha}\sqrt{H}^s u\|_{L^2}\,.
$$
Recall that $e^{-itH}$ defines the free evolution.
We define the spaces for the initial data $Y^s$
$$
Y^s=\big\{u\in {\mathcal H}^{-\varepsilon/10}\,:\, e^{-itH}(u)\in L^{2(k-1)+\eps}_{2\pi}
{\mathcal W}^{\frac{s+\eps}{k-1},r}
\cap L^{2}_{2\pi}{\mathcal H}^{s}_{\langle x \rangle^{-s-\eps/4}}\big\},
$$
where  $s$ is a positive number satisfying
\begin{equation}\label{restriction_s}
\frac{k-3}{2(k-2)}<s<\min\big(\frac{1}{2},\frac{k-1}{6}\big),
\end{equation}
$\varepsilon>0$, is a small number and $r$ is a large number all 
depending on $s$ to be fixed.
The values of $\eps$, $r$ in the definition of the space $Y^s$ 
(and also the space $X^s_T$ defined in the next 
section) will be fixed by the analysis of the next two sections.
Note that since $e^{-itH}$ is $2\pi$ periodic the interval 
$[0,2\pi]$ in the definition of $Y^s$ may be replaced by any interval of size $2\pi$.
We equip the spaces $Y^s$ with the natural norm
$$
\|u\|_{Y^s}=\|u\|_{{\mathcal H}^{-\varepsilon/10}}+
\|e^{-itH}u\|_{L^{2(k-1)+\eps}_{2\pi}{\mathcal W}^{\frac{s+\eps}{k-1},r}}
+\|e^{-itH}u\|_{ L^{2}_{2\pi}{\mathcal H}^{s}_{\langle x \rangle^{-s-\eps/4}}}\,.
$$

Thanks to Proposition~\ref{prop.cont}, we obtain that $\|S_N\|_{Y^s\rightarrow Y^s}$ is bounded,
uniformly in $N$, provided $\eps$ is small enough. The main property of the space $Y^s$ we use, is the following Gaussian property.
\begin{lemm}\label{cfg}
For every $s$ satisfying (\ref{restriction_s})
there exists $\eps_0>0$ and two positive constants $C$ and $c$ such that for
every $N\geq 1$, every $\lambda\geq 1$, every $\eps\in (0,\eps_0)$, every $r\geq 4$, every $N$ 
\begin{equation}
\begin{gathered}
\rho (u\in {\mathcal H}^{-\eps/10}\,:\, \|u\|_{Y^s}>\lambda)\leq Ce^{-c\lambda^2}\,\\
\tilde\rho_{N}(u\in E_N\,:\, \|u\|_{Y^s}>\lambda)\leq Ce^{-c\lambda^2}\,.
\end{gathered}
\end{equation}
(recall that the dependence on $\eps$ and $r$ of $Y^s$ is implicit). 
\end{lemm}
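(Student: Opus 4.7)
The plan is to decompose $\|u\|_{Y^s}$ into its three defining summands, bound each by the corresponding Gaussian deviation estimate from Sections~3--6, and conclude by a union bound at level $\lambda/3$.

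More concretely, I would introduce
$$
A_1=\{\|u\|_{\H^{-\eps/10}}>\tfrac{\lambda}{3}\},\quad A_2=\{\|e^{-itH}u\|_{L^{2(k-1)+\eps}_{2\pi}\W^{\frac{s+\eps}{k-1},r}}>\tfrac{\lambda}{3}\},
$$
$$
A_3=\{\|e^{-itH}u\|_{L^2_{2\pi}\H^s_{\<x\>^{-s-\eps/4}}}>\tfrac{\lambda}{3}\},
$$
so that $\{\|u\|_{Y^s}>\lambda\}\subset A_1\cup A_2\cup A_3$. The event $A_1$ is handled directly by~\eqref{dev1} with $\s=\eps/10$. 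For $A_2$, the $\rho$ and $\tilde\rho_N$ estimates of~\eqref{dev2} apply with $p=2(k-1)+\eps$, $q=r$ and Sobolev regularity $\eps':=(s+\eps)/(k-1)$, provided $p,q\geq 4$ (automatic from $k\geq 3$ and $r\geq 4$) and $\eps'<1/6$; this last inequality is precisely where the upper restriction $s<(k-1)/6$ from~\eqref{restriction_s} is needed, together with $\eps_0$ small enough. Finally $A_3$ matches exactly the form controlled by Lemma~\ref{lem.smooth} with $q=2$ and $\sigma=s+\eps/4$, whose hypothesis $s<\sigma<1/2$ follows from the upper bound $s<1/2$ in~\eqref{restriction_s} for $\eps_0$ small.

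No genuine obstacle is present here; the content essentially reduces to matching parameters between the definition of $Y^s$ and the hypotheses of the three input lemmas. The lower restriction $s>(k-3)/(2(k-2))$ of~\eqref{restriction_s} plays no role at this stage and will only enter later, in the deterministic nonlinear estimates on the resolution space $X^s_T$. Uniformity in $N$ is automatic since both Lemma~\ref{lemm2} and Lemma~\ref{lem.smooth} are stated uniformly in $N$ for $\tilde\mu_N$, and the Cauchy--Schwarz transfer of Lemma~\ref{lemmrho} (based on the uniform $L^p$ bounds of Proposition~\ref{prop.lp}) preserves this uniformity.
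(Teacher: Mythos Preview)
Your proposal is correct and follows essentially the same approach as the paper's proof: decompose the $Y^s$ norm into its three constituent pieces and invoke, respectively, \eqref{dev1}, \eqref{dev2}, and Lemma~\ref{lem.smooth}, with the parameter constraints on $\eps_0$ matched exactly as you describe. The paper's only cosmetic difference is that it groups the first two pieces together rather than writing out an explicit three-way union bound.
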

\begin{proof}
As before, we only prove the first claim. As a consequence of Lemma~\ref{lem.smooth}, 
we get that for every $s\in (0,1/2)$ and every $\eps>0$,
\begin{equation*}
\rho \big(\,u\in \H^{-\eps/10} \;:\; 
\big\|\e^{-itH}u\,\big\|_{L^{2}_{2\pi}
{\mathcal H}^{s}_{\langle x \rangle^{-s-\eps/4}}} > \lambda \, ) 
\leq C\e^{-c\lambda^{2}}.
\end{equation*}
Next, using Lemmas~\ref{lem.3.4} and~\ref{lemmrho}, we obtain that for every $s\in (0,(k-1)/6)$ and $\eps>0$ 
such that 
$s+\eps<(k-1)/6$, every $r\geq 4$,
\begin{equation*}
\rho\big(\,u\in \H^{-\eps/10} \;:\; \|u\|_{{\mathcal H}^{-\eps/10}}+
\big\| \bigl(\e^{-itH}u\bigr)\,\big\|_{L^{2(k-1)+\eps}_{2\pi}{\mathcal W}^{\frac{s+\eps}{k-1},r}} 
>\lambda \big) \leq C\e^{-c\lambda^{2}}.
\end{equation*}
This  completes the proof of Lemma~\ref{cfg}.
\end{proof}
\subsection{Solution spaces and linear estimates}
We define the solution spaces of functions on $[-T,T]\times \R$, defined by
$$
X^s_{T}=\{ u\in  L^{\infty}_{T}{\mathcal H}^{-\varepsilon/10}
\cap
L^{2}_{T}{\mathcal H}^{s}_{\langle x \rangle^{-s-\eps/4}} \cap L^{2(k-1)+\eps}_{T}{\mathcal W}^{\frac{s+\eps}{k-1},r}
$$

where $s$ satisfies (\ref{restriction_s}), $\varepsilon$ is a small positive number
and $r\gg 1$ is a large number, to be chosen in function of $s$. We equip $X^s_{T}$ with the norm
$$
\|u\|_{X^s_{T}}=\|u\|_{L^{\infty}_{T}{\mathcal H}^{-\varepsilon/10}}
+\|u\|_{L^{2(k-1)+\eps}_{T}{\mathcal W}^{\frac{s+\eps}{k-1},r}}
+\|u\|_{L^{2}_{T}{\mathcal H}^{s}_{\langle x \rangle^{-s-\eps/4}}}\,.
$$
In the next lemma, we state the linear estimates.
\begin{lemm}\label{linear}
For every $\tau\in\R$ and $T\leq 2\pi$,
\begin{equation*}
\|e^{-i\tau H}u\|_{X^s_{T}}\leq \|u\|_{Y^s},\qquad 
\|e^{-i\tau H}u\|_{Y^s}= \|u\|_{Y^s}.
\end{equation*}
Moreover, if $s$ satisfies (\ref{restriction_s}), there exist $\eps_0>0$ and $r_0\geq 2$ 
such that for $\eps, \eta 
\in (0,\eps_0)$, $r>r_0$,
\begin{equation}\label{gi}
\|\int_{0}^t e^{-i(t-\tau)H}(F(\tau))d\tau\|_{X^s_{T}}\leq C\|F\|_{L^1_{T}{\mathcal H}^{s-\eta}}
\end{equation}
and for $t\in [-T,T]$
\begin{equation}\label{gi_pak}
\|\int_{0}^t e^{-i(t-\tau)H}(F(\tau))d\tau\|_{Y^s}\leq C\|F\|_{L^1_{T}{\mathcal H}^{s- \eta}}
\end{equation}
(recall that the dependence on $\eps$ and $r$ of $X^s_T$ and $Y^s$ is implicit).
\end{lemm}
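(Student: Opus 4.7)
The plan is to exploit three facts: (i) $e^{-itH}$ is $2\pi$-periodic in $t$ (since $\lambda_n^2 = 2n+1\in \N$), so time integrals over any interval of length $2\pi$ are equivalent, and integrals over $[-T,T]$ with $T\leq 2\pi$ cost at most a bounded constant; (ii) all three norm components defining $X^s_T$ and $Y^s$ are translation-invariant in $t$; (iii) the Duhamel integral can be handled by Minkowski's inequality, reducing matters to linear estimates for the free propagator $e^{-itH}$.

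The identity $\|e^{-i\tau H}u\|_{Y^s}=\|u\|_{Y^s}$ is then immediate from the group property $e^{-itH}e^{-i\tau H}=e^{-i(t+\tau)H}$ together with translation invariance and $2\pi$-periodicity of the time integrals in the definition of $Y^s$. The inequality $\|e^{-i\tau H}u\|_{X^s_T}\leq \|u\|_{Y^s}$ follows in the same way: after the translation the reference interval for $X^s_T$ has length $2T\leq 4\pi$, which by periodicity is covered by two copies of $[0,2\pi]$, yielding only an absorbable multiplicative constant.

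For the inhomogeneous estimates, fix $t$ and apply Minkowski to $\int_0^t e^{-i(t-\tau)H}F(\tau)\,d\tau$, reducing to estimates of the form $\|e^{-i(t-\tau)H}F(\tau)\|_{Z}\leq C\|F(\tau)\|_{\mathcal{H}^{s-\eta}}$ for each norm $Z$ making up $X^s_T$ or $Y^s$. The first norm, $L^\infty_T\mathcal{H}^{-\eps/10}$, is controlled by unitarity of $e^{-i\tau H}$ on $\mathcal{H}^{-\eps/10}$ and the trivial embedding $\mathcal{H}^{s-\eta}\hookrightarrow \mathcal{H}^{-\eps/10}$ (for $\eta<s+\eps/10$). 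The weighted smoothing norm $L^2_T\mathcal{H}^{s}_{\langle x\rangle^{-s-\eps/4}}$ is handled by the deterministic smoothing effect of Lemma~\ref{gl}, with the choice of weight $\sigma=s+\eps/4<1/2$ and exponent $s$; this estimate gains $s$ derivatives, so is controlled by $\|F(\tau)\|_{L^2}$, a fortiori by $\|F(\tau)\|_{\mathcal{H}^{s-\eta}}$ for $\eta<s$.

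The main obstacle is the Strichartz component $L^{2(k-1)+\eps}_T\mathcal{W}^{(s+\eps)/(k-1),r}$: the admissible Strichartz pair $(p_0,q_0)$ with $p_0=2(k-1)+\eps$ gives $1/q_0=\tfrac{k-3}{2(k-1)}+O(\eps)$, which for large $k$ is nowhere near $1/r$. One therefore applies Lemma~\ref{mehler} at regularity $\sigma_0$ and spatial exponent $q_0$, then loses $1/q_0-1/r$ derivatives through Sobolev embedding $\mathcal{W}^{\sigma_0,q_0}\hookrightarrow \mathcal{W}^{(s+\eps)/(k-1),r}$; the constraint $\sigma_0\leq s-\eta$ becomes, to leading order,
\begin{equation*}
\frac{s}{k-1}+\frac{k-3}{2(k-1)}\leq s,\qquad\text{i.e.}\qquad s\geq \frac{k-3}{2(k-2)},
\end{equation*}
which is precisely the lower bound in \eqref{restriction_s}. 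Taking $r$ large enough, then $\eps_0$ and $\eta$ small enough, the above inequality is strict and both the space-Sobolev loss and the $\eta$ derivative loss are absorbed. Summing the three contributions via Minkowski over $\tau\in[0,T]$ yields \eqref{gi}; the proof of \eqref{gi_pak} is identical once one notes that, by the group property, the $Y^s$-norm of the Duhamel integral at time $t$ reduces to the same three space-time norms of $e^{-isH}F(\tau)$ over $s\in[0,2\pi]$, again after Minkowski.
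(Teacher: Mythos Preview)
Your proof is correct and follows essentially the same path as the paper's own argument. In particular, the key step --- controlling the $L^{2(k-1)+\eps}_T\mathcal{W}^{(s+\eps)/(k-1),r}$ component by Sobolev-embedding to the admissible Strichartz pair $(2(k-1)+\eps,\,q_0)$ with $1/q_0=\tfrac{2k-6+\eps}{4k-4+2\eps}$ and checking that the resulting derivative count $\tfrac{s+\eps}{k-1}+\tfrac{1}{q_0}-\tfrac{1}{r}<s-\eta$ reduces to the lower bound $s>\tfrac{k-3}{2(k-2)}$ in \eqref{restriction_s} --- is exactly the computation the paper carries out (their inequality \eqref{parvo.bis}), and the remaining two components are handled identically via unitarity and Lemma~\ref{gl}.
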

\begin{proof}
The first estimate is a direct consequence of the definition.
Let us prove (\ref{gi}).
We first observe that if $s$ satisfies (\ref{restriction_s}) then thanks to the Sobolev
inequality and (\ref{weyl-hormander}) we have
\begin{equation}\label{parvo.bis}
\|u\|_{L^{2(k-1)+\eps}_{T}{\mathcal W}^{\frac{s+\eps}{k-1},r}}
\leq
C(\|u\|_{L^{\infty}_{T}{\mathcal H}^{s- \eta}}+\|u\|_{L^4_{T}{\mathcal W}^{s-\eta,\infty}}),
\end{equation}
provided the positive numbers $\eps_0$ is small enough and $r$ is large enough.
Indeed, thanks to the Sobolev embedding, we have that
$$
\|u\|_{L^{2(k-1)+\eps}_{T}{\mathcal W}^{\frac{s+\eps}{k-1},r}}
\leq C
\|u\|_{L^{2(k-1)+\eps}_{T}
{\mathcal W}^{\frac{s+\eps}{k-1}+\sigma,\frac{4k-4+2\eps}{2k-6+\eps}}}\,,
$$
provided
$$
\sigma>\frac{2k-6+\eps}{4k-4+2\eps}-\frac{1}{r}.
$$
Observe that the couple $(p,q)=(2(k-1)+\eps,\frac{4k-4+2\eps}{2k-6+\eps})$ satisfies $p\geq 4$ and
$\frac{2}{p}+\frac{1}{q}=\frac{1}{2}$. 
Therefore (\ref{parvo.bis}) holds, if we can assure that
$$
s- \eta>\frac{2k-6+\eps}{4k-4+2\eps}-\frac{1}{r}+\frac{s+\eps}{k-1}\,.
$$
But the last condition follows from (\ref{restriction_s}), provided $0< \eps, \eta < \eps_0$, if $\eps_0$ is small enough and $r$ large enough. This proves (\ref{parvo.bis}).
Using (\ref{parvo.bis}), the Strichartz estimates of Lemma~\ref{mehler} and
the Minkowski inequality, we obtain that
\begin{multline*}
\|\int_{0}^t e^{-i(t-\tau)H}(F(\tau))d\tau\|_{
L^{2(k-1)+\eps}_{T}{\mathcal W}^{\frac{s+\eps}{k-1},r}
}\\
\leq 
C\|e^{i\tau H}F(\tau)\|_{L^1_{T}{\mathcal H}^{s-\eta}}
=
C\|F\|_{L^1_{T}{\mathcal H}^{s-\eta}}\,.
\end{multline*}
We next observe that as a consequence of Lemma~\ref{gl}, for every $s$ satisfying (\ref{restriction_s}) 
there exists $\eps_0$ such that for $\eps\in (0,\eps_0)$,
\begin{equation}\label{vtoro.bis}
\|e^{-itH}\|_{L^2\rightarrow L^2_{T}
{\mathcal H}^{s}_{\langle x \rangle^{-s-\eps/4}}}
\leq C.
\end{equation}
Using (\ref{vtoro.bis}) and the Minkowski inequality, we obtain that
$$
\|\int_{0}^t e^{-i(t-\tau)H}(F(\tau))d\tau\|_{
L^2_{T}{\mathcal H}^{s}_{\langle x \rangle^{-s-\eps/4}}}
\leq 
C\|F\|_{L^1_{T}{L^2}}
\leq
C\|F\|_{L^1_{T}{\mathcal H}^{s-\eta}}\,.
$$
The proof of (\ref{gi}) is completed by the straightforward bound
$$
\|\int_{0}^t e^{-i(t-\tau)H}(F(\tau))d\tau\|_{
L^\infty_{T}{\mathcal H}^{-\eps/10}}
\leq C\|F\|_{L^1_{T}{\mathcal H}^{-\eps/10}}
\leq C\|F\|_{L^1_{T}{\mathcal H}^{s-\eta}}\,.
$$
Let us now prove (\ref{gi_pak}). Using (\ref{parvo.bis}), 
(\ref{vtoro.bis}) and the Minkowski inequality, we obtain that for $t\in [0,T]$,
\begin{equation*}
\|\int_{0}^t e^{-i(t-\tau)H}(F(\tau))d\tau\|_{Y^s}
\leq C\|e^{-i(t-\tau)H}(\chi(\tau,t)F(\tau))\|_{L^1_{T}{\mathcal H}^{s-\eta}}
\leq C\|F\|_{L^1_{T}{\mathcal H}^{s-\eta}}\,,
\end{equation*}
where $\chi(\tau,t)$ denotes the indicator function of $\tau\in[0,t]$. 
This completes the proof of Lemma~\ref{linear}.
\end{proof}
\subsection{Multilinear estimates}
\begin{prop}\label{multi}
Assume that $s$ satisfies (\ref{restriction_s}) and let $\eta>0$.
There exist $\eps_0>0$ and $r_0\geq 2$  such that the following holds true.
For every $\eps\in (0,\eps_0)$, and every $r>r_0$ satisfying $3\eps r > 4(k-1)$, there exists $\kappa>0$ such that
for $T\leq 2\pi$ we have the estimates
\begin{equation}\label{multi_1}
\|u_1 \cdots u_{k}\|_{L^1_{T}{\mathcal H}^{s-\eta}}
\leq CT^{\kappa}\prod_{j=1}^{k}\|u_j\|_{X^s_{T}}
\end{equation}
and, uniformly in $N$,
\begin{multline}
\|S_N((S_N u_1) (S_N u_2)\cdots (S_Nu_{k}))\|_{L^1_{T}{\mathcal H}^{s-\eta}}
\\
\leq CT^{\kappa}\prod_{j=1}^{k}\|S_Nu_j\|_{X^s_{T}}\leq CT^{\kappa}\prod_{j=1}^{k}\|u_j\|_{X^s_{T}}\,.
\end{multline}
(we recall again that the dependence on $\eps$ and $r$ of $X^s_T$ is implicit).
\end{prop}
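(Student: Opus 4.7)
The plan is to combine a Hölder-type multilinear estimate with the Weyl--Hörmander characterization \eqref{weyl-hormander} of the $\H^{s-\eta}$ norm, placing exactly one distinguished factor in the smoothing component $L^2_T\H^s_{\langle x\rangle^{-s-\eps/4}}$ of $X^s_T$ and the remaining $k-1$ factors in $L^\infty_x$ through the probabilistic component $L^{2(k-1)+\eps}_T\W^{(s+\eps)/(k-1),r}$. Since both families of norms in $X^s_T$ are built from the harmonic-oscillator Sobolev scale, \eqref{weyl-hormander} allows trading regularity for polynomial weights in a balanced way on both sides of Hölder.

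First I would bound, via \eqref{weyl-hormander},
$$\|u_1\cdots u_k\|_{\H^{s-\eta}_x}\lesssim \|\langle D\rangle^{s-\eta}(u_1\cdots u_k)\|_{L^2_x}+\|\langle x\rangle^{s-\eta}(u_1\cdots u_k)\|_{L^2_x},$$
then distribute the derivative (resp.\ weight) onto a single factor $u_1$ by classical fractional Leibniz for the $\langle D\rangle$ piece and by direct Hölder for the $\langle x\rangle$ piece, finally symmetrizing in the index of the distinguished factor. The main term to control is then
$$\|(\sqrt H^{s-\eta}u_1)\,u_2\cdots u_k\|_{L^2_x}\leq \|\langle x\rangle^{-(s+\eps/4)}\sqrt H^{s-\eta}u_1\|_{L^2_x}\prod_{j=2}^{k}\|\langle x\rangle^{(s+\eps/4)/(k-1)}u_j\|_{L^\infty_x}.$$
Since $H\geq 1$, the multiplier $\sqrt H^{-\eta}$ is bounded on $L^2$, and for $s+\eps/4<1$ Proposition~\ref{prop.cont} lets me commute it through the weight $\langle x\rangle^{-(s+\eps/4)}$, bounding the first factor by $\|u_1\|_{\H^s_{\langle x\rangle^{-s-\eps/4}}}$.

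Each of the remaining norms $\|\langle x\rangle^{(s+\eps/4)/(k-1)}u_j\|_{L^\infty}$ is controlled via \eqref{weyl-hormander} by $\|u_j\|_{\W^{(s+\eps/4)/(k-1),\infty}}$, and the one-dimensional Sobolev embedding $\W^{\alpha,r}\hookrightarrow\W^{\alpha-1/r,\infty}$ dominates this by $\|u_j\|_{\W^{(s+\eps)/(k-1),r}}$ provided $(s+\eps)/(k-1)-1/r\geq(s+\eps/4)/(k-1)$, i.e.\ $3\eps r\geq 4(k-1)$, which is precisely the quantitative hypothesis. Hölder in time with one factor in $L^2_T$ and $k-1$ factors in $L^{2(k-1)+\eps}_T$ produces the leftover exponent
$$\kappa=1-\tfrac12-\tfrac{k-1}{2(k-1)+\eps}=\tfrac{\eps}{2(2(k-1)+\eps)}>0,$$
giving the $T^\kappa$ gain. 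The $S_N$-projected version is immediate: $S_N=\chi(H/(2N+1))$ is a Schwartz function of $H$, so Proposition~\ref{prop.cont} gives $\|S_N\|_{X^s_T\to X^s_T}\leq C$ uniformly in $N$, both when $S_N$ sits inside the product and when the outer $S_N$ acts on the estimate in $L^1_T\H^{s-\eta}$.

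The step I expect to require most care is the fractional Leibniz rule for $\sqrt H^{s-\eta}$ in the weighted setting. I plan to handle it by splitting via \eqref{weyl-hormander} into $\langle D\rangle^{s-\eta}$ and $\langle x\rangle^{s-\eta}$ pieces, applying Kato--Ponce to the first and direct Hölder to the second, and invoking Proposition~\ref{prop.cont} to commute $\sqrt H^{-\eta}$ across the polynomial weights that appear. The lower-order Leibniz terms, in which the $s-\eta$ derivative is spread over several factors rather than concentrated on one, are absorbed by the same estimate, because the total derivative budget $s+(k-1)\cdot(s+\eps)/(k-1)=2s+\eps$ available inside $X^s_T$ comfortably exceeds the required regularity $s-\eta$, so the most asymmetric configuration (all regularity on a single factor) is the driving case.
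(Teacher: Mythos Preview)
Your overall strategy---placing one factor in the smoothing component $L^2_T\mathcal H^s_{\langle x\rangle^{-s-\eps/4}}$ and the others in weighted $L^\infty$ via Sobolev from the $\W^{(s+\eps)/(k-1),r}$ component, then using H\"older in time for the $T^\kappa$ gain---is exactly the mechanism at work in the paper. But there is a genuine gap in the fractional Leibniz step. Classical Kato--Ponce outputs \emph{separated norms},
\[
\|\langle D\rangle^{s-\eta}(u_1\cdots u_k)\|_{L^2}\lesssim \sum_{j}\|\langle D\rangle^{s-\eta}u_j\|_{L^2}\prod_{i\neq j}\|u_i\|_{L^\infty},
\]
not the pointwise-product form $\|(\sqrt H^{s-\eta}u_1)\,u_2\cdots u_k\|_{L^2}$ that you call the ``main term''. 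Once the norms are separated you can no longer insert the weights $\langle x\rangle^{\mp(s+\eps/4)}$ by H\"older, and the \emph{unweighted} quantity $\|\langle D\rangle^{s-\eta}u_j\|_{L^2_x}$ is not controlled by any component of $X^s_T$: the $\mathcal H^{-\eps/10}$ piece is too weak, the $\W^{(s+\eps)/(k-1),r}$ piece is $L^r$-based with $r>2$ and carries only $(s+\eps)/(k-1)$ derivatives, and the $\mathcal H^s_{\langle x\rangle^{-s-\eps/4}}$ piece has a decaying weight you cannot remove. Your treatment of the ``lower-order Leibniz terms'' has the same problem: the derivative budget is fine, but the weights are not.

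The paper avoids this by Littlewood--Paley decomposing each factor with respect to $H$ first. On the localized pieces $\Delta_{N_1}(u_1)\cdots\Delta_{N_k}(u_k)$ there is no fractional operator to distribute; one applies H\"older \emph{with the weights already in place}, obtaining
\[
\|\langle x\rangle^{-s-\eps/4}\Delta_{N_k}(u_k)\|_{L^2_TL^2}\prod_{j<k}\|\langle x\rangle^{\frac{s+\eps/4}{k-1}}u_j\|_{L^{2(k-1)}_TL^\infty},
\]
and the needed $N_k^{-s+\delta}$ decay comes from a pseudodifferential commutator bound for $[\sqrt H^{s},\langle x\rangle^{-s-\eps/4}]$ plus an auxiliary frequency-localization lemma. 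A separate quasi-orthogonality lemma (rapid decay of $\Delta_M$ on products with top input frequency $N_k$ when $M\geq N_k^{1+\alpha}$) then closes the sum over output frequencies. In short, the LP decomposition is exactly the rigorous substitute for the operator-level Leibniz identity you are implicitly invoking; without it, or a proven \emph{weighted} Kato--Ponce of the right shape, the argument does not close.
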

\begin{proof}
Recall that by \eqref{S_NN}, $S_N=\chi\big(\frac{H}{2N+1}\big)$. Therefore
the second inequality is a consequence of the first as 
thanks to Proposition~\ref{prop.cont} the map $S_N$ is bounded on $X^s_T$ uniformly in $N$. 

Let us now prove the first inequality.
Consider a classical Littlewood-Paley decomposition of unity with respect to $H$,
\begin{equation}\label{LP}
{\rm Id}=\sum_{N}\Delta_N,
\end{equation}
where the summation is taken over dyadic integers $N=2^k$, $\Delta_0=\psi_0(\sqrt{H})$ and for $N\geq 1$,
$\Delta_N=\psi(\sqrt{H}/N)$, where $\psi_0,\psi$ are suitable $C_0^\infty(\R)$ functions (the support of $\psi$ does
not meet zero).
Estimate (\ref{multi_1}) is a consequence of the following localized version of it.
\begin{lemm}\label{multi_loc}
For any $\delta >0$, there exists $C>0$ such that for $N_1\leq\cdots\leq N_k$,
$$
\|\Delta_{N_1}(u_1) \cdots \Delta_{N_k}(u_{k})\|_{L^1_{T}L^2}
\leq CT^{\kappa}N_{k}^{-s+ \delta}\prod_{j=1}^{k}\|u_j\|_{X^s_{T}}\,.
$$
\end{lemm}
Let us now explain how Lemma~\ref{multi_loc} implies Proposition~\ref{multi}. 
Using the definition of ${\mathcal H}^s$, after performing (\ref{LP}), we can write
\begin{multline*}
\|u_1\cdots u_{k}\|_{L^1_{T}{\mathcal H}^{s-\eta}}
\leq 
\\
C\sum_{M}\sum_{N_1\leq \cdots\leq N_k}
\sum_{\sigma\in S_k}
(1+M)^{s-\eta}\|\Delta_{M}\big(\Delta_{N_1}(u_{\sigma(1)}) \cdots \Delta_{N_k}(u_{\sigma(k)})\big)\|_{L^1_{T}L^2}\,.
\end{multline*}
We now observe that Proposition~\ref{multi} is a consequence of Lemma~\ref{multi_loc} (with the choice $\delta = \eta/2$) and the 
following statement, applied with $\alpha>0$ small enough.
\begin{lemm}\label{reste}
Let $\alpha>0$. For every $K>0$, there exists $C>0$ such that for $M\geq N_k^{1+\alpha}$, $N_1\leq \cdots \leq N_k$,
\begin{equation}\label{ipp}
\|\Delta_M\big(\Delta_{N_1}(u_1) \cdots \Delta_{N_k}(u_{k})\big)\|_{L^1_{T}L^2}
\leq CT(1+M)^{-K}\prod_{j=1}^{k}\|u_j\|_{X^s_{T}}\,.
\end{equation}
\end{lemm}
Let us observe that in \cite{BGT}, a similar stronger property (assuming only $M\geq D N_k$, $D\gg1$)  in the context of the analysis on a compact Riemannian 
manifold is proved, the projectors $\Delta_N$ being replaced by the corresponding objects associated to
the Laplace-Beltrami operator. In the context of our analysis below the argument is much simpler
compared to \cite{BGT}.
\begin{proof}[Proof of Lemma~\ref{reste}]
Since the space $X^s_{T}$ is embedded in $L^\infty_{T}{\mathcal H}^{-\varepsilon/10}$
(which is the only $L^2$ type component of our resolution space),
by duality and summing of geometric series 
the bound (\ref{ipp}) is a consequence of the eigenfunction bound
\begin{multline}\label{ipp_bis}
\forall\,K>0,\,
\exists\, C_{K}\,:\,\forall\, N_i\geq 1,\, \forall\, M\geq 1,\,\forall\, n_i,\,m\,:\,
N_i\leq \sqrt{2n_i+1}\leq 2N_i,
\\ 
M\leq \sqrt{2m+1}\leq 2M,\quad
\big|\int_{\R}h_{n_1}\cdots h_{n_k} h_{m}\big|\leq C_{K}(1+M)^{-K}
\end{multline}
(the argument is trivial in the time variable).
By writing $h_{m}=\frac{1}{(2m+1)^j} H^{j}h_{m}$, we make integrations by parts in the left hand-side of 
(\ref{ipp_bis}). Starting from the definition of $h_n$ 
\eqref{formula} we have the relations
\begin{equation}\label{derive_hermite}
h'_{n}(x)=\sqrt{\frac{n}2}\,h_{n-1}(x)-\sqrt{\frac{n+1}2}\,h_{n+1}(x)
\end{equation}
and
\begin{equation*}
x\,h_{n}(x)=h_{n}'(x)+\sqrt{2(n+1)}\,h_{n+1}(x),
\end{equation*}
which implies the bound (for $p\geq 2$),
\begin{equation}\label{crude}
\|x^{k_1}\partial_{x}^{k_2}h_{n}\|_{L^p(\R)}\leq C_{k_1,k_2}(1+|n|)^{\frac{k_1+k_2}{2}}\,.
\end{equation}
Using (\ref{crude}) (applied when extending the powers of $H$) we obtain that the left hand-side of 
(\ref{ipp_bis}) is bounded by $C_j(N_{k}/M)^{2j}$ which implies the needed bound thanks to 
our restriction on $M$. This completes the proof of Lemma~\ref{reste}.
\end{proof}
It remains to prove Lemma~\ref{multi_loc}.
\begin{proof}[Proof of Lemma~\ref{multi_loc}]
We take  the parameters $\eps$ and $r$ in the scope of applicability of 
Lemma~\ref{cfg} and Lemma~\ref{linear}.
By introducing artificially the weight $\langle x\rangle^{s+\eps/4}$, using the H\"older inequality
and Proposition~\ref{prop.cont}, we can write
\begin{multline}\label{jmb}
\|\Delta_{N_1}(u_1) \cdots \Delta_{N_k}(u_{k})\|_{L^1_{T}L^2}
\leq
\\
\leq C
\|\langle x\rangle^{-s-\eps/4}\Delta_{N_k}(u_{k})\|_{L^2_{T}L^2}
\prod_{j=1}^{k-1}\|
\langle x \rangle^{\frac{s+\eps/4}{k-1}}u_{j}\|_{L^{2(k-1)}_{T}L^\infty}\,.
\end{multline}
We now estimate the right hand-side of (\ref{jmb}).
First, we observe that there exists $\kappa>0$ such that for $j=2,\cdots k$,
using the Sobolev inequality, and the boundedness on $L^r$ ($1<r<\infty$) of zero-th order pseudo-differential operators, we can write
\begin{equation}\label{jmb2}
\|\langle x \rangle^{\frac{s+\eps/4}{k-1}}u_{j}\|_{L^{2(k-1)}_{T}L^\infty}
\\
\leq CT^{\kappa}
\|u_{j}\|_{L^{2(k-1)+\eps}_{T}{\mathcal W}^{\frac{s+\eps}{k-1},r}}
\leq
CT^{\kappa}\|u_j\|_{X^s_{T}}\,,
\end{equation}
provided $\frac{3\eps}{4(k-1)}r>1$. Next, using similar arguments as in the proof of Lemma~\ref{reste}, we obtain the following statement.
\begin{lemm}\label{cambera}
For any $\kappa>0$ and any $K$, there exists $C$ such that for any $M$ satisfying $M\leq N^{1-\kappa}$, we have 
$$\|\Delta_M \langle x\rangle^{-s-\varepsilon/4}\Delta_{N}(u)\|_{L^2_{T}L^2} \leq C (1+M+N)^{-K} \|u\|_{X^s_T}.
$$
\end{lemm}
As a consequence of Lemma~\ref{cambera}, we obtain for arbitrarily small $\kappa >0$,
  \begin{multline*}
\|\langle x\rangle^{-s-\varepsilon/4}\Delta_{N_k}(u_{k})\|_{L^2_{T}L^2}\\
\begin{aligned}
& \leq
\sum_{ M \geq N_k^{1-\kappa}}\|\Delta_{M}(\langle x\rangle^{-s-\varepsilon/4}\Delta_{N_k}(u_{k}))\|_{L^2_{T}L^2}+ CN_k^{-s}\|u_k\|_{X^s_T}
\\
& \leq
C\sum_{ M \geq N_k^{1-\kappa}}\|\Delta_{M}\frac{\sqrt{H}^{s}}{M^s} (\langle x\rangle^{-s-\varepsilon/4}
\Delta_{N_k}(u_{k}))\|_{L^2_{T}L^2}+ CN_k^{-s}\|u_k\|_{X^s_T}
\\
& \leq 
CN_{k}^{-s+ \kappa s}
\|\sqrt{H}^{s}(\langle x\rangle^{-s-\varepsilon/4}
\Delta_{N_k}(u_{k}))\|_{L^2_{T}L^2}+ CN_k^{-s}\|u_k\|_{X^s_T}\,.
\end{aligned}
\end{multline*}
Using Proposition~\ref{prop.cont}, we can write
\begin{multline*}
\|\sqrt{H}^{s}(\langle x\rangle^{-s-\varepsilon/4}
\Delta_{N_k}(u_{k}))\|_{L^2_{T}L^2}
\leq
C\|u_k\|_{_{L^{2}_{T}{\mathcal H}^{s}_{\langle x \rangle^{-s-\varepsilon/4}}}}
\\
+\,\,
\|[\sqrt{H}^{s},\langle x\rangle^{-s-\varepsilon/4}]
\Delta_{N_k}(u_{k})
\|_{L^2_{T}L^2}\,.
\end{multline*}
In order to estimate the commutator contribution, we shall use 
the Weyl-H\"ormander pseudo-differential calculus associated to the metric
\begin{equation}\label{metric}
\frac{dx^2}{1+x^2}+\frac{d\xi^2}{1+\xi^2}.
\end{equation}
The symbol classes $S(\mu,m)$ associated to (\ref{metric}) are the spaces of smooth functions on $\R^2$
satisfying the bounds
$$
|\partial_{x}^\alpha\partial_\xi^\beta a(x,\xi)|
\leq C_{\alpha,\beta}\langle x\rangle^{\mu-\alpha}\langle\xi\rangle^{m-\beta}\,.
$$
Then we have (see cf. \cite{Hormander}, Section 18.5, \cite{Robert} or \cite{Bouclet})
that if $a_1\in S(\mu_1,m_1)$ and $a_2\in S(\mu_2,m_2)$ with corresponding operators ${\rm Op}(a_1)$
and ${\rm Op}(a_2)$ then the commutator $[{\rm Op}(a_1),{\rm Op}(a_2)]$ is a pseudo-differential operator
with symbol in $S(\mu_1+\mu_2-1,m_1+m_2-1)$. Using this fact, we obtain that
$$
\|[\sqrt{H}^{s},\langle x\rangle^{-s-\varepsilon/4}]
\Delta_{N_k}(u_{k})\|_{L^2_{T}{\mathcal H}^{\eps/4}}
\leq C\|\Delta_{N_k}(u_{k})\|_{L^2_{T}L^2}
$$
and by duality
$$
\|[\sqrt{H}^{s},\langle x\rangle^{-s-\varepsilon/4}]
\Delta_{N_k}(u_{k})
\|_{L^2_{T}L^2}
\leq C\|\Delta_{N_k}(u_{k})\|_{L^2_T{\mathcal H}^{-\eps/4}}\leq CT^{\frac{1}{2}}\|u_k\|_{X^s_{T}}\,.
$$ 
Therefore, we obtain the bound
\begin{equation}\label{jmb3}
\|\langle x\rangle^{-s-\varepsilon/4}\Delta_{N_k}(u_{k})\|_{L^2_{T}L^2}
\leq CN_k^{-s+\kappa s}\|u_k\|_{X^s_{T}}\,.
\end{equation}
We now collect (\ref{jmb}), (\ref{jmb2}) and (\ref{jmb3}) 
in order to complete the proof of Lemma~\ref{multi_loc}.
\end{proof}
This completes the proof of Proposition~\ref{multi}.
\end{proof}
\subsection{Further properties of $Y^s$ with respect to the measure $\rho$}
From now each time we invoke the space $Y^s$, we mean that $s$ satisfies \eqref{restriction_s} and $\eps$ and $r$ are in the scope of applicability of Lemma~\ref{cfg}, 
Lemma~\ref{linear} and Proposition~\ref{multi}.
Let us next define some auxiliary spaces. Let $\widetilde{Y}^s$ be defined by 
$$
\widetilde{Y}^s=\big\{u\in {\mathcal H}^{-\varepsilon/20}\,:\, e^{-itH}(u)\in L^{2(k-1)+\eps}_{2\pi}
{\mathcal W}^{\frac{s+\eps}{k-1},r}
\cap L^{2}_{2\pi}{\mathcal H}^{s}_{\langle x \rangle^{-s-\eps/5}}\big\},
$$ 
equipped with the natural norm.
The remaining part of this section is devoted to three lemmas needed in the proof of Theorem~\ref{thm4}.
Using the density in $L^p$, $1\leq p<\infty$ of the Schwartz class ${\mathcal S}(\R)$,
as a consequence of Proposition~\ref{prop.cont} (and the fact that the result is straightforward if $f\in \mathcal{S}$), we have the following statement.
\begin{lemm}\label{Stone}
For every $f\in Y^s$, $\|(1-S_N)(f)\|_{Y^s}=o(1)_{N\rightarrow \infty}$.
A similar statement holds for $\tilde{Y}^s$.
\end{lemm}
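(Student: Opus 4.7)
The plan is a standard three-$\varepsilon$ density argument, using three inputs: (a) uniform-in-$N$ boundedness of $S_N$ on each weighted function space appearing in the $Y^s$ norm, which follows from Proposition~\ref{prop.cont} applied to $\phi = \chi$ with $h = (2N+1)^{-1/2}$ (combined with the fact that $S_N$ commutes with both $e^{-itH}$ and $\sqrt{H}^s$); (b) density of the Schwartz class $\mathcal{S}(\R)$ in each of these weighted $L^p$-type spaces, a consequence of the classical density of $\mathcal{S}$ in $L^p(\R)$ for $1\leq p<\infty$ together with the norm equivalence~\eqref{weyl-hormander}; and (c) the fact that $S_N g \to g$ in every seminorm for $g\in \mathcal{S}(\R)$, since the Hermite coefficients $(g,h_n)_{L^2}$ decay faster than any polynomial, so the tail $(1-S_N)g = \sum_{n}(1-\chi(\lambda_n^2/(2N+1)))(g,h_n)h_n$ converges to $0$ absolutely in every weighted Sobolev norm.

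The first seminorm is the easiest:
\begin{equation*}
\|(1-S_N)f\|_{\mathcal{H}^{-\varepsilon/10}}^2 = \sum_{n\geq 0}\lambda_n^{-\varepsilon/5}\bigl(1-\chi\bigl(\tfrac{\lambda_n^2}{2N+1}\bigr)\bigr)^{2}|c_n|^2 \longrightarrow 0
\end{equation*}
by dominated convergence, since the summand is bounded by the summable quantity $\lambda_n^{-\varepsilon/5}|c_n|^2$ and each term tends to zero pointwise in $n$.

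For the remaining two components, use the commutation $(1-S_N)e^{-itH} = e^{-itH}(1-S_N)$ to transport the projector past the propagator. It then suffices to prove, pointwise in $t$, the following statement: for each fixed $v\in E_x$, where $E_x$ stands for either $\mathcal{W}^{\frac{s+\varepsilon}{k-1},r}(\R)$ or $\mathcal{H}^{s}_{\langle x\rangle^{-s-\varepsilon/4}}(\R)$, one has $\|(1-S_N) v\|_{E_x}\to 0$. Given $\delta>0$, choose $g\in \mathcal{S}(\R)$ with $\|v-g\|_{E_x}<\delta$ by (b); then
\begin{equation*}
\|(1-S_N)v\|_{E_x} \leq \|(1-S_N)(v-g)\|_{E_x} + \|(1-S_N)g\|_{E_x} \leq C\delta + \|(1-S_N)g\|_{E_x},
\end{equation*}
where the first inequality uses (a) (applied with the appropriate weight exponent, which lies in the range $|\alpha|<1$ of Proposition~\ref{prop.cont}), and the second term tends to $0$ by (c).

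Finally, to pass from pointwise-in-$t$ convergence to convergence of the space-time norms, apply dominated convergence on $t\in(0,2\pi)$: the uniform bound $\|(1-S_N)e^{-itH}f(\cdot)\|_{E_x}\leq (C+1)\|e^{-itH}f(\cdot)\|_{E_x}$ from (a) provides an $L^{p}_t$ majorant (with $p$ equal to $2(k-1)+\varepsilon$ or $2$ depending on the component), and the $L^p_t$ norm of the difference therefore tends to zero. The corresponding statement for $\widetilde{Y}^s$ is identical, only the weight exponents change but remain admissible for Proposition~\ref{prop.cont}. The only potential obstacle is avoiding a circular density argument: I sidestep it by treating the three seminorms individually and using pointwise (in $t$) density of Schwartz in the spatial factor rather than attempting to assert density of $\mathcal{S}$ in $Y^s$ as a whole.
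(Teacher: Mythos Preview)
Your proof is correct and follows the same route as the paper, which merely sketches the argument in one sentence before the lemma: density of $\mathcal{S}(\R)$ in $L^p$ combined with the uniform boundedness of $S_N$ from Proposition~\ref{prop.cont}, plus the trivial case $f\in\mathcal{S}$. You have simply written out the details the paper omits---splitting the three seminorms, commuting $S_N$ past $e^{-itH}$ and $\sqrt{H}^s$, and closing with dominated convergence in~$t$.
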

One can easily see that the analysis of Lemma~\ref{lemm2} and Lemma~\ref{lem.smooth} 
implies that $\varphi_N$, defined by \eqref{def.phin} 
is a Cauchy sequence in $L^2(\Omega;Y^s)$ and thus we may see the measures $\mu$ and $\rho$ 
as finite Borel measures on $Y^s$. We deduce, thanks to Lemma~\ref{cfg}
\begin{lemm}\label{weak}
There exist $C,c>0$ such that
$$
\rho(u\,:\, \|u\|_{Y^s}>\lambda)+\rho(u\,:\, \|u\|_{\tilde{Y}^s}>\lambda)
\leq Ce^{-c\lambda^2}\,.
$$
\end{lemm}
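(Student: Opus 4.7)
The plan is to reduce both summands to the tail estimates already in hand. The first bound, $\rho(u : \|u\|_{Y^s} > \lambda) \leq C e^{-c\lambda^2}$, is nothing but the first inequality in Lemma~\ref{cfg}, whose conclusion holds precisely for the choices of $s, \eps, r$ that we are working with under the standing convention for $Y^s$. No further work is required there.

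For the second summand, my plan is to re-run the proof of Lemma~\ref{cfg} verbatim, only with the two weight parameters $\eps/10$ and $\eps/4$ appearing in the definition of $Y^s$ replaced by $\eps/20$ and $\eps/5$. Concretely I would decompose
\begin{equation*}
\{u : \|u\|_{\tilde Y^s} > \lambda\} \subset \bigcup_{i=1}^{3} \{u : N_i(u) > \lambda/3\},
\end{equation*}
where $N_1(u) = \|u\|_{{\mathcal H}^{-\eps/20}}$, $N_2(u) = \|e^{-itH}u\|_{L^{2(k-1)+\eps}_{2\pi} \mathcal{W}^{(s+\eps)/(k-1), r}}$ and $N_3(u) = \|e^{-itH}u\|_{L^2_{2\pi} \mathcal{H}^s_{\langle x\rangle^{-s-\eps/5}}}$. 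Each factor is controlled by the probabilistic tools already available: $N_1$ by Lemma~\ref{lem.3.4} combined with Lemma~\ref{lemmrho}; $N_2$ by Lemma~\ref{lemm2} again pushed from $\mu$ to $\rho$ via Lemma~\ref{lemmrho}; and $N_3$ by Lemma~\ref{lem.smooth} applied with $\sigma = s + \eps/5$. A union bound and the combination of the three Gaussian tails then yield the desired bound for $\rho(u : \|u\|_{\tilde Y^s} > \lambda)$.

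The only point that needs a small verification is that the parameters remain in the scope of the underlying lemmas. For Lemma~\ref{lem.3.4} we only need $\eps/20 > 0$, which is trivial; for Lemma~\ref{lemm2} we need the same bound $\eps < 1/6$ as for $Y^s$; for Lemma~\ref{lem.smooth} we need $s < s + \eps/5 < 1/2$, which follows from $s + \eps/4 < 1/2$ (the analogous requirement for $Y^s$) since $\eps/5 < \eps/4$. Thus no new smallness condition on $\eps$ is introduced, and the choices already fixed in the definition of $Y^s$ also work for $\tilde Y^s$.

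I do not foresee a substantive obstacle: the argument is essentially the same as that of Lemma~\ref{cfg}, the new lemma being included separately only because the space $\tilde Y^s$, with its slightly stronger weights, will be needed in the globalization argument in Section~8. The mild saving of the exponents $\eps/20$ versus $\eps/10$ and $\eps/5$ versus $\eps/4$ is what will later allow us to absorb commutator-type losses when comparing iterates, but plays no role in proving the exponential tail bound itself, which only requires that the Gaussian bounds of Lemma~\ref{Gaussian} be applied with the appropriate power-weighted eigenfunction estimates already established in Section~3.
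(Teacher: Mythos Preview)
Your proposal is correct and follows the same route as the paper, which simply records the lemma as a consequence of Lemma~\ref{cfg} (the $\tilde Y^s$ case being an obvious variant with the exponents $\eps/10,\eps/4$ replaced by $\eps/20,\eps/5$). Your explicit verification that the modified weight parameters remain in the scope of Lemmas~\ref{lem.3.4}, \ref{lemm2} and~\ref{lem.smooth} is exactly the check the paper leaves implicit; the only inaccuracy is in your closing remark about motivation: the space $\tilde Y^s$ is introduced not for the globalization in Section~8 but to obtain the compact embedding $\tilde Y^s\hookrightarrow Y^s$ of Lemma~\ref{kompakt}, used in the measure-invariance argument of Section~9.
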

We also have the following statement.
\begin{lemm}\label{kompakt}
Assume that $s<s'<s+\frac \eps {20}$. Then we have that $\tilde{Y}^s\subset Y^s$ and the embedding is compact.
In particular, thanks to Lemma~\ref{weak}, for every $\delta>0$ there exists a compact $K$ of $Y^s$ such that
$\rho(Y^s)-\rho(K)<\delta$.
\end{lemm}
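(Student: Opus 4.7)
The continuous inclusion $\widetilde{Y}^s\hookrightarrow Y^s$ is immediate from the definitions: each defining seminorm of $Y^s$ is dominated by the corresponding one of $\widetilde{Y}^s$, since $\eps/20<\eps/10$ makes $\H^{-\eps/20}\hookrightarrow \H^{-\eps/10}$ bounded, the intermediate Strichartz-type component $L^{2(k-1)+\eps}_{2\pi}\W^{\frac{s+\eps}{k-1},r}$ is common to both spaces, and $\<x\>^{-s-\eps/4}\leq \<x\>^{-s-\eps/5}$ pointwise on $\R$ controls the weighted $L^2_{2\pi}\H^s$-component.

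For the compactness, the strategy is to exhibit the embedding as a uniform limit of finite-rank operators. The key structural facts are that $H$ has compact resolvent on $L^2(\R)$ (since $\lambda_n^2=2n+1\to\infty$) and that the smooth cutoffs $S_N=\chi(H/(2N+1))$ are bounded $\widetilde{Y}^s\to \widetilde{Y}^s$ and $Y^s\to Y^s$ uniformly in $N$ by Proposition~\ref{prop.cont}. Since the range of $S_N$ is contained in the finite-dimensional space $E_N$, the operator $S_N$ maps bounded subsets of $\widetilde{Y}^s$ into relatively compact subsets of $Y^s$. The remaining point is to show that the family $(S_N)$ converges to the identity in the operator norm $\widetilde{Y}^s\to Y^s$, i.e.\ $\sup_{\|u\|_{\widetilde{Y}^s}\leq 1}\|(\mathrm{Id}-S_N)u\|_{Y^s}\to 0$. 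For the $\H^{-\eps/10}$-component, the spectral estimate $\|(\mathrm{Id}-S_N)u\|_{\H^{-\eps/10}}\leq \lambda_N^{-\eps/20}\|u\|_{\H^{-\eps/20}}$ supplies the required decay. For the weighted $L^2_{2\pi}\H^s_{\<x\>^{-s-\eps/4}}$-component, one splits the weight as $\<x\>^{-s-\eps/4}=\<x\>^{-\eps/20}\cdot\<x\>^{-s-\eps/5}$ and combines the $\eps/20$ decay gain at infinity with a commutator estimate between $S_N$ and the polynomial weight, in the spirit of the Weyl--H\"ormander calculus used in the proof of Lemma~\ref{multi_loc}. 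For the middle Strichartz-type component, whose seminorm coincides in $\widetilde{Y}^s$ and $Y^s$, the decay must be harvested by interpolation between the gain already obtained in the $\H^{-\eps/10}$-component and the uniform Strichartz control (Lemma~\ref{mehler}), exploiting the $\eps/20$ gap between the indices $-\eps/20$ and $-\eps/10$ to produce a genuine interpolation saving.

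The corollary about $\rho$ is then immediate. By Lemma~\ref{weak}, $\rho(u:\|u\|_{\widetilde{Y}^s}>R)\leq Ce^{-cR^2}$, so given $\delta>0$ one can choose $R$ large enough that this is less than $\delta$. The closed ball $K_R=\{u\in\widetilde{Y}^s:\|u\|_{\widetilde{Y}^s}\leq R\}$ is relatively compact in $Y^s$ by the compact embedding; its $Y^s$-closure is a compact subset of $Y^s$ whose complement has $\rho$-mass at most $\delta$.

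The main obstacle is the handling of the middle Strichartz-type norm, which is \emph{not} strengthened in passing from $Y^s$ to $\widetilde{Y}^s$. The required smallness of $(\mathrm{Id}-S_N)u$ in this norm, uniformly over the unit ball of $\widetilde{Y}^s$, cannot be read off directly from the definitions and must be produced by an interpolation argument combining the spectral gap in the $\H$-norms with the boundedness in the Strichartz component. This interpolation step is the technical heart of the proof; the rest reduces to bookkeeping of the three components of the $Y^s$-norm.
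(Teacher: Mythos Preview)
Your overall strategy---approximate the identity by the finite-rank operators $S_N$ and show operator-norm convergence---is sound, and agrees in spirit with the paper's sketch. The gap is that you have dropped the parameter $s'$. The hypothesis $s<s'<s+\frac{\eps}{20}$ is there precisely so that the source space is $\widetilde{Y}^{s'}$ (with the larger exponent $s'$), not $\widetilde{Y}^s$; the paper's own remarks following the lemma make this explicit: ``the assumption $s'>s$ ensures that we have compactness in terms of $x$ derivatives and weights in $\langle x\rangle$ for the second norm, whereas it ensures compactness in terms of derivatives in the third norm, while the assumption $s'<s+\frac{\eps}{20}\Rightarrow s'+\frac{\eps}{5}<s+\frac{\eps}{4}$ ensures compactness in terms of weights in $\langle x\rangle$ in this last norm''. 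Once you restore $s'$, the obstacle you isolate evaporates: the Strichartz component in $\widetilde{Y}^{s'}$ carries $\frac{s'+\eps}{k-1}$ derivatives rather than $\frac{s+\eps}{k-1}$, and on the spectral support of $\mathrm{Id}-S_N$ (where $\lambda_n^2\gtrsim N$) this derivative gap directly supplies the negative power of $N$ you need. Likewise the weighted component $\mathcal{H}^{s'}_{\langle x\rangle^{-s'-\eps/5}}$ gains both $s'-s$ derivatives (hence high-frequency compactness) and, since $s'+\frac{\eps}{5}<s+\frac{\eps}{4}$, extra weight decay (hence compactness at spatial infinity). Time compactness then comes, as the paper notes, from trading some of the $H$-regularity gain for regularity in $t$ via $\partial_t(e^{-itH}u)=-iHe^{-itH}u$.

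By contrast, the interpolation you propose for the Strichartz component cannot work with the same $s$. To interpolate the target norm $L^{2(k-1)+\eps}_{2\pi}\mathcal{W}^{\frac{s+\eps}{k-1},r}$ between a weaker endpoint (where you harvest decay from the $\mathcal{H}^{-\eps/10}$ gain) and a stronger endpoint (where you have only uniform control), the target derivative index must lie \emph{strictly} between the two; but the strongest endpoint available in $\widetilde{Y}^s$ is exactly $\mathcal{W}^{\frac{s+\eps}{k-1},r}$ itself, so there is no room to interpolate. The same defect afflicts your treatment of the weighted $\mathcal{H}^s$ component: the factor $\langle x\rangle^{-\eps/20}$ you split off furnishes decay at spatial infinity but no high-frequency decay, and without $s'>s$ there is no derivative gap to compensate. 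In short, the ``main obstacle'' you correctly identify is not a technicality to be interpolated away---it is the reason the hypothesis contains $s'$.
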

Notice that  as soon  we gain some positive power in $H$, we gain compactness because powers of $H$ controls both powers of $|D_x|$ and of $x$. As a consequence,  the assumption $s'>s$ ensures that we have compactness in terms of $x$ derivatives and weights in $\langle x \rangle$ for the second norm, whereas it ensures compactness in terms of derivatives in the third norm, while the assumption $s'< s + \frac \eps {20}\Rightarrow s' + \frac \eps 5 < s + \frac \eps 4 $ ensures compactness  in terms of weights in $\langle x \rangle$ in this last norm.  Finally, since the second and the third term in the definition of $Y^s$ are defined in terms of the free evolution, we may exchange some saving derivatives  in $H$
for some  compactness in time. We omit the details.
\subsection{Local well-posedness results}
Using the results of the previous subsections, we can now get local well-posedness 
results (uniform with respect to the parameter $N$) for 
\begin{equation}\label{eq_N}
(i\partial_t-H)u=\kappa_{0}S_N\big(|S_Nu|^{k-1}S_Nu\big),\quad u(0,x)=u_{0}(x)\in E_{N}.
\end{equation}
Here is a precise statement.
\begin{prop}\label{wp_N}
There exist $C>0$, $c\in (0,1)$, $\gamma>0$ such that
for every $A\geq 1$ if we set $T=cA^{-\gamma}$ then for every $N\geq 1$, every $u_0\in E_N$ satisfying
$\|u_0\|_{Y^s}\leq A$ there exists a unique solution of (\ref{eq_N})
on the interval $[-T,T]$ such that
$\|u\|_{X^s_{T}}\leq A+A^{-1}.$ 
In addition for $t\in [0,T]$,
\begin{equation}\label{soto}
\|u(t)\|_{Y^s}\leq A+A^{-1}.
\end{equation}
Moreover, if $u$ and $v$ are two solutions with data $u_0$ and $v_0$ 
respectively, satisfying $\|u_0\|_{Y^s}\leq A$, $\|v_0\|_{Y^s}\leq A$ then
$\|u-v\|_{X^s_{T}}\leq C\|u_0-v_0\|_{Y^s}$ and for $t\in [0,T]$,
$$
\|u(t)-v(t)\|_{Y^s}\leq C\|u_0-v_0\|_{Y^s}\,.
$$
\end{prop}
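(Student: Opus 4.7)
The plan is to run a Picard iteration based on the Duhamel formulation
\begin{equation*}
u(t) = e^{-itH}u_0 - i\kappa_0\int_0^t e^{-i(t-\tau)H}\,S_N\bigl(|S_N u|^{k-1}S_N u\bigr)(\tau)\,d\tau =: \Phi(u)(t),
\end{equation*}
performed in the closed ball $B_T = \{u\in X^s_T : \|u\|_{X^s_T}\leq A+A^{-1}\}$, with all constants uniform in $N$. Local existence of a solution (for a time possibly depending on $N$) is already guaranteed by viewing \eqref{eq_N} as a finite-dimensional ODE on $E_N$; the content of the proposition is the uniform-in-$N$ lower bound $T=cA^{-\gamma}$ on the lifespan together with the quantitative bounds.

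First, I would check that $\Phi$ maps $B_T$ into itself. The linear part obeys $\|e^{-itH}u_0\|_{X^s_T}\leq \|u_0\|_{Y^s}\leq A$ by Lemma~\ref{linear}. For the nonlinear part, pick $\eta>0$ small enough so that Proposition~\ref{multi} applies (in particular, fix $\eps,r$ as in its statement). Combining the inhomogeneous linear estimate~\eqref{gi} with the multilinear bound of Proposition~\ref{multi} applied to $u_1=\cdots=u_k=u$ (the complex conjugations appearing in $|S_Nu|^{k-1}S_Nu$ do not affect the estimate), we get
\begin{equation*}
\Bigl\|\int_0^t e^{-i(t-\tau)H}S_N\bigl(|S_N u|^{k-1}S_N u\bigr)d\tau\Bigr\|_{X^s_T}\leq C\bigl\|S_N(|S_Nu|^{k-1}S_Nu)\bigr\|_{L^1_T\mathcal{H}^{s-\eta}}\leq CT^{\kappa}(A+A^{-1})^k.
\end{equation*}
Since $A\geq 1$ gives $A+A^{-1}\leq 2A$, the choice $T=cA^{-\gamma}$ with $\gamma=(k+1)/\kappa$ and $c$ small enough makes this last expression $\leq A^{-1}$, so that $\|\Phi(u)\|_{X^s_T}\leq A+A^{-1}$. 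For the contraction, expand $|S_Nu|^{k-1}S_Nu-|S_Nv|^{k-1}S_Nv$ as a telescoping sum of $k$ terms, each $k$-linear in $(u,v)$ with exactly one factor $S_N(u-v)$; Proposition~\ref{multi} then yields
\begin{equation*}
\|\Phi(u)-\Phi(v)\|_{X^s_T}\leq CT^{\kappa}(A+A^{-1})^{k-1}\|u-v\|_{X^s_T}\leq \tfrac12\|u-v\|_{X^s_T}
\end{equation*}
provided $c$ is chosen small enough. The Banach fixed-point theorem produces the unique $u\in B_T$ solving $\Phi(u)=u$.

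The uniform $Y^s$ bound~\eqref{soto} at each fixed $t\in[0,T]$ is obtained from the second part of Lemma~\ref{linear}: the equality $\|e^{-itH}u_0\|_{Y^s}=\|u_0\|_{Y^s}\leq A$ together with the pointwise-in-$t$ inhomogeneous estimate~\eqref{gi_pak}, and then Proposition~\ref{multi} again applied to the fixed-point $u\in B_T$, yielding
\begin{equation*}
\|u(t)\|_{Y^s}\leq A + CT^{\kappa}(A+A^{-1})^k\leq A+A^{-1}.
\end{equation*}
The Lipschitz estimates in both $X^s_T$ and $Y^s$ follow from the same telescoping expansion applied to $u-v$ with initial data $u_0,v_0$: the $X^s_T$ bound comes from absorbing the factor $CT^{\kappa}(A+A^{-1})^{k-1}\leq 1/2$ into the left-hand side, and the $Y^s$ bound is then immediate from \eqref{gi_pak}.

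The substantive obstacle has already been overcome in Proposition~\ref{multi}, which is tailored to accommodate the smooth cut-offs $S_N$ with constants independent of $N$; once it is granted, the scheme above is a standard subcritical contraction argument. The one point to keep in mind is that the iterates must be simultaneously controlled in $X^s_T$ (for the contraction) and in $Y^s$ (to propagate the initial-data norm along the flow), but this is painless since \eqref{gi} and \eqref{gi_pak} share the same right-hand side $\|F\|_{L^1_T\mathcal{H}^{s-\eta}}$, so both estimates close off of the single multilinear bound of Proposition~\ref{multi}.
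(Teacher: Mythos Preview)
Your proof is correct and follows essentially the same route as the paper: Duhamel formulation, then a contraction on a ball in $X^s_T$ using Lemma~\ref{linear} and Proposition~\ref{multi}, followed by the $Y^s$ control via~\eqref{gi_pak}. The only cosmetic difference is that the paper runs the fixed-point argument on the ball of radius $2A$ and afterwards re-inserts the fixed point into~\eqref{mica1} to recover the sharper bound $A+A^{-1}$, whereas you work directly on the ball of radius $A+A^{-1}$; both choices lead to the same conclusion with comparable values of $\gamma$.
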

\begin{proof}
We rewrite (\ref{eq_N}) as the integral equation
$$
u(t)=e^{-itH}(u_0)+\kappa_0 \int_{0}^{t}e^{-i(t-\tau)H}\big(
S_N\big(|S_Nu(\tau)|^{k-1}S_Nu(\tau)\big)\big)d\tau\equiv
\Phi_{u_0}(u).
$$
Using Lemma~\ref{linear} and Proposition~\ref{multi}, we infer the bounds
\begin{equation}\label{mica1}
\|\Phi_{u_0}(u)\|_{X^s_{T}}\leq \|u_0\|_{Y^s}+CT^{\kappa}\|u\|_{X^s_{T}}^{k}
\end{equation}
and (after some algebraic manipulations on $|u|^{k-1}u-|v|^{k-1}v$)
\begin{equation}\label{mica2}
\|\Phi_{u_0}(u)-\Phi_{u_0}(v)\|_{X^s_{T}}\leq CT^{\kappa}\|u-v\|_{X^s_{T}}
(\|u\|_{X^s_{T}}^{k-1}+\|u\|_{X^s_{T}}^{k-1}).
\end{equation}
Therefore if we choose $T$ as $T=cA^{-K}$ with $c\ll 1$ and $K> (k+10)/\kappa$, the estimates \eqref{mica1}
and \eqref{mica2} yield that
the map $\Phi_{u_0}$ is a contraction on the ball of radius $2A$ and centered at the origin of $X_T^s$. 
The fixed point of this contraction is a 
solution of (\ref{eq_N}). The uniqueness and the estimate on the difference of two solutions is a consequence 
of Proposition~\ref{multi}. 
Finally coming back to (\ref{mica1}), we infer that the solution satisfies
\begin{equation}\label{soto_bis}
\|u\|_{X^s_T}=\|\Phi_{u_0}(u)\|_{X^s_T}\leq A+ Cc^{\kappa}(1+A)^{-\kappa K}A^{k}\leq A+A^{-1},
\end{equation}
for $c$ small enough and
by possibly taking $K$ slightly larger (replacing $K$ by $K+1/\kappa$ for instance).
Let us now prove \eqref{soto}. Using Lemma~\ref{linear} ( $\eta$ in the scope of its applicability), Proposition~\ref{multi} and (\ref{soto_bis}), we obtain 
of Lemma~\ref{linear},
\begin{eqnarray*}
\|u(t)\|_{Y^s}& \leq & \|u_0\|_{Y^s}+CT^{\kappa}\||u|^{k-1} u\|_{L^1_T{\mathcal H}^{s-\eta}}
\\
& \leq &
A+CT^{\kappa}\|u\|_{X^s_{T}}^{k}
\\
& \leq &
A+A^{-1}.
\end{eqnarray*}
This completes the proof of Proposition~\ref{wp_N}.
\end{proof}
Let us remark that the existence statement in Proposition~\ref{wp_N} is not of importance (indeed see the next
section for a global existence statement). The important point is the uniformness 
with respect to $N$ of the bounds obtained.
Similarly, we can also obtain a well-posedness result for the original problem
\begin{equation}\label{eq}
(i\partial_t-H)u=\kappa_{0}|u|^{k-1}u,\quad u(0,x)=u_0(x)\in Y^s.
\end{equation}
\begin{prop}\label{wp}
Then there exist $C>0$, $c\in (0,1)$, $\gamma>0$ such that
for every $A\geq 1$ if we set $T=cA^{-\gamma}$ for every $u_0\in Y^s$ satisfying
$\|u_0\|_{Y^s}\leq A$ there exists a unique solution of (\ref{eq}) on $[-T,T]$ such that
$
\|u\|_{X^s_{T}}\leq A+A^{-1}.
$
In addition for $t\in [0,T]$,
\begin{equation*}
\|u(t)\|_{Y^s}\leq A+A^{-1}.
\end{equation*}
Moreover, if $u$ and $v$ are two solutions with data $u_0$ and $v_0$ respectively, satisfying
$\|u_0\|_{Y^s}\leq A$, $\|v_0\|_{Y^s}\leq A$ then
$
\|u-v\|_{X^s_{T}}\leq C\|u_0-v_0\|_{Y^s}.
$
\end{prop}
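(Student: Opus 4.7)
The plan is to mirror essentially verbatim the fixed point scheme used in Proposition~\ref{wp_N}, with the only difference that the projectors $S_N$ no longer appear. Concretely, I rewrite the Cauchy problem \eqref{eq} in Duhamel form,
\begin{equation*}
u(t) = e^{-itH}u_0 + \kappa_0 \int_0^t e^{-i(t-\tau)H}\bigl(|u(\tau)|^{k-1} u(\tau)\bigr)\,d\tau \equiv \Phi_{u_0}(u),
\end{equation*}
and look for a fixed point of $\Phi_{u_0}$ inside a suitable ball of the resolution space $X_T^s$.

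The two crucial ingredients are already available. First, the linear estimate~\eqref{gi} from Lemma~\ref{linear} controls $\|e^{-itH}u_0\|_{X_T^s}$ by $\|u_0\|_{Y^s}$ and bounds the Duhamel integral $\int_0^t e^{-i(t-\tau)H}F(\tau)d\tau$ in $X_T^s$ by $\|F\|_{L^1_T {\mathcal H}^{s-\eta}}$. Second, the multilinear estimate \eqref{multi_1} of Proposition~\ref{multi} gives, with $u_1=\cdots=u_{k-1}=u$ and $u_k=u$ (or with the polarized version for the difference $|u|^{k-1}u - |v|^{k-1}v$, which is a finite sum of $k$-fold products in $u,v,\bar u,\bar v$), the two estimates
\begin{equation*}
\|\Phi_{u_0}(u)\|_{X_T^s} \leq \|u_0\|_{Y^s} + CT^\kappa \|u\|_{X_T^s}^{k},
\end{equation*}
\begin{equation*}
\|\Phi_{u_0}(u)-\Phi_{u_0}(v)\|_{X_T^s} \leq CT^\kappa \|u-v\|_{X_T^s}\bigl(\|u\|_{X_T^s}^{k-1}+\|v\|_{X_T^s}^{k-1}\bigr).
\end{equation*}
Choosing $T = cA^{-\gamma}$ with $\gamma$ a large enough multiple of $1/\kappa$ and $c$ small enough, these bounds make $\Phi_{u_0}$ a strict contraction on the centered ball of radius $2A$ in $X_T^s$ whenever $\|u_0\|_{Y^s}\leq A$. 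The fixed point theorem delivers a unique solution $u \in X_T^s$, and revisiting the first inequality above at this solution gives the sharper bound $\|u\|_{X_T^s} \leq A + A^{-1}$ upon slightly enlarging $\gamma$. The Lipschitz continuity with respect to the initial data in $X_T^s$ is read off the second estimate.

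Finally, to upgrade the $X_T^s$ bound to the pointwise $Y^s$ bound \mbox{$\|u(t)\|_{Y^s} \leq A+A^{-1}$} for $t\in[-T,T]$, I apply the second linear estimate \eqref{gi_pak} of Lemma~\ref{linear} to the Duhamel representation of $u(t)$, then invoke Proposition~\ref{multi} once more to control $\bigl\||u|^{k-1}u\bigr\|_{L^1_T {\mathcal H}^{s-\eta}}$ by $\|u\|_{X_T^s}^k \leq (A+A^{-1})^k$, and absorb this into $A+A^{-1}$ by taking $c$ smaller if necessary. No step here is genuinely hard because the entire difficulty has already been packaged into Lemma~\ref{linear} and Proposition~\ref{multi}; the only point deserving attention is the bookkeeping that ensures the same exponent $\gamma$ works simultaneously for the contraction estimate in $X_T^s$ and for the propagation of the $Y^s$ norm via \eqref{gi_pak}, which is handled by choosing $\gamma$ slightly larger than in the $X_T^s$ argument alone.
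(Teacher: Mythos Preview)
Your proposal is correct and follows exactly the approach the paper intends: the paper explicitly states that the proof of Proposition~\ref{wp} is essentially the same as that of Proposition~\ref{wp_N} and omits it, and your argument reproduces that fixed-point scheme (Duhamel formulation, Lemma~\ref{linear} plus Proposition~\ref{multi}, contraction on the ball of radius $2A$, then the $Y^s$ upgrade via \eqref{gi_pak}) with the projectors $S_N$ removed.
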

The proof of Proposition~\ref{wp} is essentially the same as that of Proposition~\ref{wp_N} and hence will be omitted.
\section{Global well-posedness }\label{Section.Global}
In this section, we prove the global existence results for a full measure set for \eqref{nlsk}.
Moreover this set will be reproduced by the flow which is a key element in the measure invariance 
argument of the next section.
\subsection{Hamiltonian structure of the approximate problem}
\noindent Here we consider again the problem 
\begin{equation}\label{odekbis}
(i\partial_t-H)u=\kappa_{0}S_N\big(|S_Nu|^{k-1}S_Nu\big),\quad u(0,x)=\Pi_{N}(u(0,x))\in E_{N},
\end{equation}
with $\kappa_{0}=\pm 1$ if $k=3$ and $\kappa_{0}=1$ if $k\geq 5$.

\noindent For $u\in E_{N}$, write 
$$
u=\sum_{n=0}^{N}c_n h_{n}=\sum_{n=0}^{N}(a_n+ib_n)h_{n},\quad a_n,b_n\in \R.
$$
Then we have the following result.
\begin{lemm}\label{Hamilton}
Set
\begin{multline*}
J(a_0,,\cdots,a_N,b_0,\cdots, b_{N})=
\frac{1}{2}\sum_{n=0}^{N}\lambda_{n}^{2}(a_n^2+b_n^2)
\\
+\frac{\kappa_{0}}{k+1}
\|S_N\big(\sum_{n=0}^{N}(a_n+ib_n) 
h_{n}\big)\|_{L^{k+1}(\R)}^{k+1}\,.
\end{multline*}
The equation \eqref{odekbis} is a Hamiltonian ODE of the form
\begin{equation}\label{hamm}
\dot{a}_n=\frac{\partial J}{\partial b_n},\quad
\dot{b}_n=-\frac{\partial J}{\partial a_n},\quad 0\leq n\leq N.
\end{equation}
In particular $J$ is conserved by the flow. Moreover the mass
\begin{equation}\label{mass}
\|u\|^{2}_{L^{2}(\R)}=\sum_{n=0}^{N}(a_n^2+b_n^2)
\end{equation}
is conserved under the flow of \eqref{odekbis}. 
As a consequence, \eqref{odekbis} has a well-defined global flow $\tilde{\Phi}_{N}$.
\end{lemm}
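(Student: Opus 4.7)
The plan is as follows. First, since $S_N = \chi(H/(2N+1))$ sends $L^2(\R)$ into $E_N$ and $H$ preserves each Hermite mode, the right-hand side of \eqref{odekbis} lies in $E_N$ whenever $u$ does, so the Cauchy problem reduces to a finite dimensional ODE on $E_N$. Writing $u = \sum_{n=0}^{N} c_n h_n$ with $c_n = a_n + i b_n$, taking the $L^2$-pairing of \eqref{odekbis} with $h_n$, and using $S_N h_n = \chi_n h_n$ with $\chi_n = \chi((2n+1)/(2N+1))$ together with the self-adjointness of $S_N$, I obtain
\begin{equation*}
i\dot c_n = \lambda_n^2 c_n + \kappa_0 \chi_n \int_{\R} |S_N u|^{k-1} (S_N u)\, h_n \, dx.
\end{equation*}

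Second, I verify that this ODE coincides with the Hamiltonian system \eqref{hamm}. Since $k$ is odd, $|S_N u|^{k+1} = (A^2 + B^2)^{(k+1)/2}$ where $A = \sum_m \chi_m a_m h_m$ and $B = \sum_m \chi_m b_m h_m$ are real, so $J$ is a polynomial in the real variables $(a_m,b_m)$ and the chain rule yields
\begin{equation*}
\frac{\partial J}{\partial a_n} = \lambda_n^2 a_n + \kappa_0 \chi_n \int_{\R} |S_N u|^{k-1} \, \mathrm{Re}(S_N u)\, h_n \, dx,
\end{equation*}
with the analogous formula for $\partial J/\partial b_n$ and $\mathrm{Im}(S_N u)$. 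Assembling $\dot a_n + i \dot b_n$ from \eqref{hamm} and using the identity $\mathrm{Im}(z) - i\,\mathrm{Re}(z) = -iz$ reproduces exactly the projected equation above. Conservation of $J$ along the flow is then automatic from the Hamiltonian form. Conservation of the mass \eqref{mass} follows either from Noether's theorem applied to the $U(1)$-invariance $c_n \mapsto e^{i\theta} c_n$ of $J$, or directly by pairing \eqref{odekbis} with $\bar u$ in $L^2$: the linear term vanishes by self-adjointness of $H$, while $\int S_N(|S_N u|^{k-1} S_N u)\, \bar u\, dx = \int |S_N u|^{k+1} dx \in \R$ by self-adjointness of $S_N$.

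Third, global existence. The ODE is polynomial, so Cauchy--Lipschitz yields local existence and uniqueness; it suffices to rule out finite-time blow-up of $\max_{n\leq N}(|a_n|+|b_n|)$. In the defocusing case ($\kappa_0 = 1$) conservation of $J$ immediately gives $\lambda_n^2 (a_n^2 + b_n^2) \leq 2 J(0)$ uniformly in $n\leq N$, and since $\lambda_n^2 \geq 1$ we are done. The only real difficulty is the focusing cubic case $k=3$, $\kappa_0 = -1$, in which $J = \tfrac{1}{2} \|H^{1/2} u\|_{L^2}^2 - \tfrac{1}{4}\|S_N u\|_{L^4}^4$ is not coercive on its own. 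Here I would invoke the classical one-dimensional Gagliardo--Nirenberg inequality $\|v\|_{L^4}^4 \leq C \|v\|_{L^2}^3 \|\partial_x v\|_{L^2}$ applied to $v = S_N u$, together with $\|\partial_x S_N u\|_{L^2} \leq C \|H^{1/2} u\|_{L^2}$ (since $S_N$ commutes with $H$ and is $L^2$-bounded) and the already-proved mass conservation $\|u(t)\|_{L^2} = \|u_0\|_{L^2}$, to turn the conservation identity $\|H^{1/2} u\|_{L^2}^2 = 2J(0) + \tfrac{1}{2} \|S_N u\|_{L^4}^4$ into a sublinear inequality in $\|H^{1/2} u\|_{L^2}$. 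This yields a uniform a priori bound on $\|H^{1/2}u(t)\|_{L^2}$ depending only on $J(0)$ and $\|u_0\|_{L^2}$, hence on $(a_n,b_n)_{n\leq N}$, and therefore produces the global flow $\tilde\Phi_N$. The focusing cubic bound is thus the only genuinely non-trivial step; the rest is direct verification.
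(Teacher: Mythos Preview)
Your proposal is correct. The verification of the Hamiltonian structure and of mass conservation matches the paper's approach, only with more of the algebra written out (the paper dismisses \eqref{hamm} as ``straightforward'' and proves mass conservation by exactly the pairing with $\bar u$ you describe).

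Where you diverge is in the global existence argument, and here you have made your life harder than necessary. The paper's ``As a consequence'' refers directly to mass conservation: since $\sum_{n=0}^N (a_n^2+b_n^2)$ is constant along the flow, the trajectory is confined to a sphere in $\R^{2(N+1)}$, and finite-time blow-up is impossible for the ODE, irrespective of the sign $\kappa_0$ or the degree $k$. Your separate treatment of the defocusing case via the energy, and especially your appeal to Gagliardo--Nirenberg in the focusing cubic case to control $\|H^{1/2}u\|_{L^2}$, is correct but entirely superfluous: the $L^2$ bound already bounds every coordinate. There is no ``genuinely non-trivial step'' here.
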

\begin{proof}
The proof of (\ref{hamm}) is straightforward.
Let us next show the $L^2$ conservation.
Multiply the equation \eqref{odekbis} with $\ov{u}$ and integrate over $\R$.
First, by an integration by parts, we have 
\begin{equation}\label{calc1}
-\int_{\R}\ov{u}\,Hu=\int_{\R}|H^{1/2}u|^{2}\in \R.
\end{equation}
Then by \eqref{commut}, we deduce that
\begin{eqnarray}\label{calc2}
\int_{\R}S_N\big(|S_Nu|^{k-1}S_Nu\big)\ov{u}&=&\int_{\R}S_N\big(|S_Nu|^{k-1}S_Nu\big)S_N\ov{u}\\
&=&\int_{\R}\big(|S_Nu|^{k-1}S_Nu\big)S_N\ov{u}\in \R.\nonumber 
\end{eqnarray}
Hence, from \eqref{calc1} and \eqref{calc2} we infer that 
\begin{equation*}
\frac{\text{d}}{\text{d}t}\|u\|^{2}_{L^{2}(\R)}=0.
\end{equation*}
This completes the proof of Lemma~\ref{Hamilton}.
\end{proof}
\noindent Denote by $\Phi_{N}(t)\,:\,E_{N}\longrightarrow E_{N}$ the flow of the ode \eqref{odekbis}.
We now state an invariance result which holds both in the defocusing and in the focusing cases.
\begin{prop}\label{rhon}
The measure $\tilde{\rho}_{N}$ defined by (\ref{tilda}) (or (\ref{tilda_pak})) 
is invariant under the flow $\tilde{\Phi}_{N}$ of \eqref{odekbis}.
\end{prop}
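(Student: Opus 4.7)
The plan is the standard Liouville-type argument: rewrite $\text{d}\tilde\rho_N$ as the product of Lebesgue measure on $\R^{2(N+1)}\simeq E_N$ and a density that is constant along trajectories of the Hamiltonian flow, then combine Liouville's theorem for Hamiltonian ODE's with the two conservation laws supplied by Lemma~\ref{Hamilton}.

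First I would observe that in the coordinates $c_n=a_n+ib_n$, the Hamiltonian $J$ defined in Lemma~\ref{Hamilton} splits as
\begin{equation*}
J(a,b)=\tfrac{1}{2}\sum_{n=0}^{N}\lambda_n^{2}(a_n^2+b_n^2)+\tfrac{\kappa_0}{k+1}\|S_N u\|_{L^{k+1}(\R)}^{k+1},
\end{equation*}
so that in the defocusing case
\begin{equation*}
\text{d}\tilde\rho_N(u)=\Bigl(\prod_{n=0}^{N}\frac{2\pi}{\lambda_n^{2}}\Bigr)\,e^{-J(a,b)}\prod_{n=0}^{N}\text{d}a_n\,\text{d}b_n,
\end{equation*}
while in the focusing case ($\kappa_0=-1$, $k=3$)
\begin{equation*}
\text{d}\tilde\rho_N(u)=\Bigl(\prod_{n=0}^{N}\frac{2\pi}{\lambda_n^{2}}\Bigr)\,\zeta\bigl(\|\Pi_N u\|_{L^2(\R)}^{2}-\alpha_N\bigr)\,e^{-J(a,b)}\prod_{n=0}^{N}\text{d}a_n\,\text{d}b_n.
\end{equation*}
Thus in both cases $\text{d}\tilde\rho_N = F_N(u)\,\text{d}L(u)$, where $\text{d}L$ is Lebesgue measure on $E_N$ and $F_N$ is a continuous function of $(J(u),\|\Pi_N u\|_{L^2}^{2})$.

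Next, since \eqref{odekbis} is a Hamiltonian system of ODE's on $\R^{2(N+1)}$ with smooth Hamiltonian (the nonlinear term is a polynomial in $(a_n,b_n)$), Liouville's theorem applies: the vector field $(\partial_{b_n}J,-\partial_{a_n}J)$ has vanishing divergence, so the flow $\tilde\Phi_N(t)$ preserves Lebesgue measure $\text{d}L$. At the same time, Lemma~\ref{Hamilton} gives that both $J$ and the mass $\|u\|_{L^2(\R)}^{2}=\sum(a_n^2+b_n^2)=\|\Pi_N u\|_{L^2(\R)}^{2}$ (the latter equality holds because $u(t)\in E_N$ throughout) are conserved along the flow. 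Consequently, $F_N\circ\tilde\Phi_N(t)=F_N$ pointwise on $E_N$.

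Combining these two facts, for any Borel set $A\subset E_N$ and any $t\in\R$,
\begin{equation*}
\tilde\rho_N\bigl(\tilde\Phi_N(t)^{-1}(A)\bigr)=\int_{\tilde\Phi_N(t)^{-1}(A)}F_N(u)\,\text{d}L(u)=\int_{A}F_N\bigl(\tilde\Phi_N(-t)(v)\bigr)\,\text{d}L(v)=\int_{A}F_N(v)\,\text{d}L(v)=\tilde\rho_N(A),
\end{equation*}
where the second equality uses the change of variables $u=\tilde\Phi_N(-t)(v)$ together with the Liouville invariance $|\det D\tilde\Phi_N(-t)|\equiv 1$, and the third equality uses the conservation of $F_N$. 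There is no real obstacle here: the result is a direct consequence of the finite-dimensional Hamiltonian structure and the two conservation laws, once one notices the compatibility between the Gaussian factor in $\text{d}\tilde\mu_N$ and the quadratic part of $J$.
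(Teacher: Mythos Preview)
Your proof is correct and follows essentially the same approach as the paper: Liouville's theorem gives invariance of Lebesgue measure on $E_N$, and conservation of $J$ (together with $L^2$ conservation in the focusing case) gives invariance of the density, hence of $\tilde\rho_N$. Your write-up is somewhat more explicit than the paper's---you spell out the density $F_N$ and the change-of-variables computation---but the argument is the same.
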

\begin{proof}
The proof is based on the Liouville theorem which we recall below.
\begin{lemm}\label{gentchev}
Consider the ODE $\dot{x}(t)=F(t,x(t))$, $x(t)\in\R^n$ with a local flow $\Phi(t)$. Suppose also
that $F$ is divergence free, i.e.
$
\sum_{j=1}^{n}\partial_j F=0
$
($\partial_j$ being the derivative with respect to the $(j+1)$'th variable of $F$).
Then the Lebesgue measure of $\R^n$ is invariant under the flow $\Phi(t)$.
\end{lemm}
Observe that the ODE's in the scope of applicability of the Liouville theorem are not necessarily autonomous.
Let us now return to the proof of Proposition~\ref{rhon}.
By Lemma~\ref{gentchev}, 
the measure $\text{d}a\text{d}b=\prod_{n=0}^{N}a_{n}b_{n}$ is invariant under $\Phi_{N}$. 
Then, as the Hamiltonian $J$ is conserved, the measure 
$
d_N^{-1}\e^{-J}\prod_{n=0}^{N}\text{d}a_{n}\text{d}b_{n}
$
is also invariant by the flow of \eqref{odekbis}. 
This completes the proof in the defocusing case.
A similar argument applies in the focusing case by invoking the $L^2$ conservation.
This completes the proof of Proposition~\ref{rhon}.
\end{proof}
Let us now decompose the space $\mathcal{H}^{- \sigma}( \R)= E_N^\perp \oplus E_N$, and denote by $\Phi_N(t)= (e^{itH}, \tilde{\Phi}_N(t))$ the flow of the equation  
\begin{equation}\label{odek*}
(i\partial_t-H)u=\kappa_{0}S_N\big(|S_Nu|^{k-1}S_Nu\big),\quad u(0,x)=(u_0^N, u_{0,N}) \in E_N^\perp \oplus E_{N},
\end{equation}
\begin{coro}\label{cor.inv}
The measure $\rho_N$ is invariant under the flow $\Phi_N(t)$. 
\end{coro}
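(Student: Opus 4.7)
The plan is to exploit the fact that the flow $\Phi_N(t)$ decouples cleanly along the orthogonal sum $\mathcal{H}^{-\sigma}(\mathbb{R}) = E_N^\perp \oplus E_N$ in a way that matches the product structure $\rho_N = \mu^N \otimes \tilde{\rho}_N$. Write a solution as $u = u^\perp + u_N$ with $u^\perp \in E_N^\perp$ and $u_N \in E_N$. The key observation is that the nonlinearity $S_N(|S_N u|^{k-1} S_N u)$ in \eqref{odek*} takes values in $E_N$ (since $S_N$ is a function of $H$ supported on low frequencies, hence $\mathrm{Range}(S_N) \subset E_N$ by \eqref{commut}). Projecting \eqref{odek*} onto $E_N^\perp$ and $E_N$ respectively, and noting that $H$ commutes with the projectors, the equation decouples into
\begin{equation*}
(i\partial_t - H) u^\perp = 0, \qquad (i\partial_t - H) u_N = \kappa_0 S_N(|S_N u_N|^{k-1} S_N u_N),
\end{equation*}
so that $\Phi_N(t)(u^\perp, u_N) = (e^{-itH} u^\perp,\ \tilde{\Phi}_N(t) u_N)$ honestly factors as the product flow.

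Given this product structure, by Fubini it suffices to prove that each factor preserves the corresponding measure. The second factor is exactly Proposition~\ref{rhon}: $\tilde{\rho}_N$ is invariant under $\tilde{\Phi}_N(t)$. For the first factor, $\mu^N$ is the Gaussian measure on $E_N^\perp$ given as the law of $\sum_{n > N} \frac{\sqrt 2}{\lambda_n} g_n(\omega) h_n$. Since $h_n$ are eigenfunctions of $H$ with eigenvalues $\lambda_n^2$, the free propagator acts diagonally:
\begin{equation*}
e^{-itH}\Bigl(\sum_{n > N} \tfrac{\sqrt 2}{\lambda_n} g_n h_n\Bigr) = \sum_{n > N} \tfrac{\sqrt 2}{\lambda_n} \bigl(e^{-it\lambda_n^2} g_n\bigr) h_n.
\end{equation*}
Because the $g_n$ are independent standard complex Gaussians and the law of each $g_n$ is rotationally invariant on $\mathbb{C}$, the family $(e^{-it\lambda_n^2} g_n)_{n > N}$ has the same joint law as $(g_n)_{n > N}$. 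Hence $(e^{-itH})_\ast \mu^N = \mu^N$.

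Combining these two ingredients with the product decomposition $\rho_N = \mu^N \otimes \tilde{\rho}_N$ and the factored form of $\Phi_N(t)$ gives $(\Phi_N(t))_\ast \rho_N = (e^{-itH})_\ast \mu^N \otimes (\tilde{\Phi}_N(t))_\ast \tilde{\rho}_N = \mu^N \otimes \tilde{\rho}_N = \rho_N$, which is the claim. There is no real obstacle here: the nontrivial invariance content sits entirely in Proposition~\ref{rhon} (Liouville plus conservation of $J$, plus conservation of the $L^2$ mass in the focusing case to handle the $\zeta$-cutoff). The only point requiring a moment of care is verifying that the nonlinearity indeed lives in $E_N$ so that the split is exact; this is immediate from $\mathrm{Range}(S_N) \subset E_N$ and the self-adjointness relations in \eqref{commut}.
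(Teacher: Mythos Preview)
Your argument is correct and is precisely the approach the paper takes: the paper's proof is the single sentence ``it is clear for product sets $A = A^N \times A_N$ and these sets generate the Borel $\sigma$-algebra,'' and you have simply spelled out the two ingredients (Proposition~\ref{rhon} on $E_N$ and rotation invariance of the Gaussians on $E_N^\perp$) that make this clear. The only cosmetic discrepancy is a sign in the free propagator on $E_N^\perp$ (the paper writes $e^{itH}$, you write $e^{-itH}$); your sign is the one consistent with $(i\partial_t - H)u^\perp = 0$, and in any case $\mu^N$ is invariant under either.
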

Indeed, it is clear for product sets $A= A^N \times A_N, A^N \subset E_N^\perp, A_N \subset E_N$ and these sets generate the Borelian $\sigma$-algebra.
\subsection{Global existence }
Here we show that the problem \eqref{nlsk} is globally well-posed on a set of full $\rho$ measure.
Our first result gives bounds (independent of $N$) on the solution of the approximate equation \eqref{odek*}.
\begin{prop}\label{215}
There exists a constant $C>0$ such that for all $i,N\in \N^{*}$, there exists a $\tilde\rho_{N}$ 
measurable set $\tilde{\Sigma}_{N}^{i}\subset E_{N}$ so that for all $i,N\in \N^{*}$
\begin{equation*}
\tilde{\rho}_{N}(E_{N}\backslash \tilde{\Sigma}_{N}^{i})\leq 2^{-i}.
\end{equation*}
For all $f\in \tilde{\Sigma}_{N}^{i}$ and $t\in \R$
\begin{equation}\label{bonn1}
\|\tilde \Phi_{N}(t)f\|_{Y^s}\leq C\big( i+\log(1+|t|) \big)^{\frac12}.
\end{equation}
Moreover, there exists $c>0$ such that for every $t_0$, every $i\geq 1$ and $N\geq 1$,
\begin{equation}\label{bonn2}
\tilde \Phi_N(t_0) ( \tilde{\Sigma}^{i}_N) \subset \tilde{\Sigma}^{i+ [c\log(|t_0|+1)] +3}_N.
\end{equation}
\end{prop}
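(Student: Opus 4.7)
The plan is a Bourgain-type globalization argument based on three ingredients: the $\tilde\rho_N$-invariance of $\tilde\Phi_N$ (Proposition~\ref{rhon}), the Gaussian tail of the $Y^s$ norm under $\tilde\rho_N$ (Lemma~\ref{cfg}), and the local well-posedness with step $cA^{-\gamma}$ furnished by Proposition~\ref{wp_N}. Fix a universal constant $D\gg 1$ to be chosen below. For $n\in\Z$ set
$$
B_n := D\bigl(i+\log(1+|n|)\bigr)^{1/2},\qquad T_n := c_0 B_n^{-\gamma},
$$
with $c_0,\gamma$ the constants of Proposition~\ref{wp_N}. Build a time grid $\tau_0=0$, $\tau_{n+1}=\tau_n+T_n$, $\tau_{-n-1}=\tau_{-n}-T_{-n-1}$; since $T_n\geq c'(\log(2+|n|))^{-\gamma/2}$, the series $\sum T_n$ diverges in both directions, so $\tau_n\to\pm\infty$. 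Define
$$
\tilde\Sigma^i_N := \bigcap_{n\in\Z}\tilde\Phi_N(-\tau_n)\bigl(\{f\in E_N:\|f\|_{Y^s}\leq B_n\}\bigr),
$$
which is Borel as a countable intersection of preimages of closed sets under the continuous flow $\tilde\Phi_N$.

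\textbf{Measure estimate.} By $\tilde\rho_N$-invariance of $\tilde\Phi_N$ and Lemma~\ref{cfg},
$$
\tilde\rho_N(E_N\setminus\tilde\Sigma^i_N)\leq\sum_{n\in\Z}\tilde\rho_N\bigl(\{\|f\|_{Y^s}>B_n\}\bigr)\leq C\sum_{n\in\Z}e^{-cB_n^2}=Ce^{-cD^2 i}\sum_{n\in\Z}(1+|n|)^{-cD^2}.
$$
Choosing $D$ large enough that $cD^2>2$ makes the series convergent and the whole expression bounded by $2^{-i}$ for every $i\geq 1$, which proves the first part of the proposition. This fixes $D$ universally.

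\textbf{Norm bound and propagation.} For $f\in\tilde\Sigma^i_N$ and $t\in[\tau_n,\tau_{n+1}]$, Proposition~\ref{wp_N} applied on the interval of length $T_n=c_0 B_n^{-\gamma}$ with initial datum $\tilde\Phi_N(\tau_n)f$ (of $Y^s$-norm $\leq B_n$) yields $\|\tilde\Phi_N(t)f\|_{Y^s}\leq B_n+B_n^{-1}\leq 2B_n$. A straightforward asymptotic from the recursion $\tau_{n+1}-\tau_n\sim(\log n)^{-\gamma/2}$ gives $\tau_n\gtrsim n(\log n)^{-\gamma/2}$, hence $\log(1+|n|)\leq\log(1+|t|)+O(\log\log(2+|t|))$; absorbing the subdominant $\log\log$ term into a slightly larger $C$ yields $\|\tilde\Phi_N(t)f\|_{Y^s}\leq C(i+\log(1+|t|))^{1/2}$, i.e.\ \eqref{bonn1}. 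For \eqref{bonn2}, set $g=\tilde\Phi_N(t_0)f$; for any $\tau\in\R$,
$$
\|\tilde\Phi_N(\tau)g\|_{Y^s}=\|\tilde\Phi_N(\tau+t_0)f\|_{Y^s}\leq C\bigl(i+\log(1+|\tau+t_0|)\bigr)^{1/2}\leq C\bigl(i'+\log(1+|\tau|)\bigr)^{1/2},
$$
using $\log(1+|\tau+t_0|)\leq\log(1+|\tau|)+\log(1+|t_0|)$, with $i'=i+[c\log(1+|t_0|)]+3$ and $c$ chosen universally so that $[c\log(1+|t_0|)]+3$ dominates $\log(1+|t_0|)$ plus the additive loss incurred when transferring this continuous bound back to the discrete defining inequalities of $\tilde\Sigma^{i'}_N$ at the grid $\tau^{(i')}_m$.

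\textbf{Main obstacle.} The delicate part is the simultaneous balancing of three competing rates. The Gaussian tail $e^{-c\lambda^2}$ of Lemma~\ref{cfg}, which itself rests on the probabilistic dispersive gain encoded in the space $Y^s$, is precisely what allows the thresholds $B_n$ to grow as slowly as $(\log n)^{1/2}$: slow enough that the resulting bound on $\|\tilde\Phi_N(t)f\|_{Y^s}$ is logarithmic in $t$ (as required by \eqref{bonn1}), and slow enough that the steps $T_n\sim(\log n)^{-\gamma/2}$ still sum to $+\infty$ so that $\tau_n$ exhausts the real line, while fast enough that $\sum_n e^{-cB_n^2}$ is summable for $D$ large and the complement has measure $\leq 2^{-i}$. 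A mere polynomial tail would not simultaneously satisfy these three constraints, which is why the Gaussian nature of Lemma~\ref{cfg} is crucial.
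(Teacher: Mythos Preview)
Your overall strategy for the measure estimate and for \eqref{bonn1} is sound and follows the same Bourgain-type globalization as the paper, though with a different choice of grid. However, your proof of \eqref{bonn2} has a genuine gap.

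The problem is that your grid $\{\tau_n^{(i)}\}$ depends on $i$ in a way that breaks the propagation statement. Near the origin your steps satisfy $T_k^{(i)}\approx c_0D^{-\gamma}i^{-\gamma/2}$ (for $k\ll e^{i}$), so for a fixed $t_0$ (say $t_0=1$) and large $i$ the grid index $n=n^{(i)}(t_0)$ obeys $\tau_n^{(i)}\approx n\,c_0D^{-\gamma}i^{-\gamma/2}\approx t_0$, i.e.\ $n\approx c_0^{-1}D^{\gamma}i^{\gamma/2}$ and hence $\log(1+n)\approx \tfrac{\gamma}{2}\log i$. Now take $m=0$ in the defining condition for $\tilde\Sigma_N^{i'}$: you need
\[
\|\tilde\Phi_N(t_0)f\|_{Y^s}\le D\sqrt{i'+\log 1}=D\sqrt{i'},
\]
while from your own propagation step (with the exact constant $D$) you only get
\[
\|\tilde\Phi_N(t_0)f\|_{Y^s}\le D\sqrt{(i+1)+\log(1+n)}\approx D\sqrt{i+1+\tfrac{\gamma}{2}\log i}.
\]
This forces $i'-i\gtrsim \tfrac{\gamma}{2}\log i$, which cannot be bounded by $[c\log(1+|t_0|)]+3$ with $c$ independent of $i$. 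Going instead through the continuous bound \eqref{bonn1} is no help: the constant $C$ there strictly exceeds $D$ (it absorbs the factor $2$ and the $n\leftrightarrow t$ conversion), so converting a $C$-bound back to the $D$-definition requires a multiplicative, not additive, shift in $i$. Your sentence ``plus the additive loss incurred when transferring\dots'' hides exactly this $i$-dependent loss.

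The paper avoids this by indexing not by grid position but by a dyadic time scale $j$: one sets $\tilde B_N^{i,j}=\{\|u\|_{Y^s}\le D(i+j)^{1/2}\}$ and proves that for $f\in\tilde\Sigma_N^{i,j}$ and \emph{all} $|t|\le 2^j$ one has $\|\tilde\Phi_N(t)f\|_{Y^s}\le D(i+j+1)^{1/2}$, with the \emph{same} constant $D$. The point is that this bound depends only on the dyadic scale $2^j$, not on how many grid steps lie in $[0,t]$. Shifting time by $t_0$ then shifts $j$ by at most $j_0\approx\log_2(1+|t_0|)$, and the identity $(i+j+1)=(i+2)+(j-1)$ gives $\tilde\Phi_N(t_0)f\in\tilde\Sigma_N^{i+j_0+2,j-1}$ for every $j$, hence $\tilde\Phi_N(t_0)f\in\tilde\Sigma_N^{i+j_0+2}$, with a shift independent of $i$. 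Your single-index grid lacks this $(i,j)\mapsto(i+2,j-1)$ symmetry, which is precisely what makes \eqref{bonn2} work.
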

The property \eqref{bonn2} allows to simplify the construction 
of a set invariant by the limit flow, compared to a similar situation 
in \cite{Tzvetkov2,BT1}.
\begin{proof}
We set, for $i,j$ integers $\geq 1$, 
$$
\tilde{B}_{N}^{i,j}(D)\equiv
\big\{u\in E_{N}\,:\,\|u\|_{Y^s}\leq D(i+j)^{\frac{1}{2}}\big\},
$$
where the number $D\gg 1$ (independent of $i,j,N$) will be fixed later. 
Thanks to Proposition~\ref{wp_N}, there exist $c>0$, $\gamma>0$ only depending on $s$ 
such that if we set $\tau \equiv c D^{-\gamma}(i+j)^{-\gamma/2}$ then for every $t\in[-\tau,\tau]$,
\begin{multline}\label{flotN}
\tilde{\Phi}_{N}(t)\big(\tilde{B}_{N}^{i,j}(D)\big)\subset 
\big\{u\in E_{N}\,:\,
\|u\|_{Y^s}
\leq D(i+j)^{\frac{1}{2}}+ D^{-1} (i+j)^{-\frac 1 2}
\\
\leq D (i+j+1)^{\frac 12}\big\}\, ,
\end{multline}
provided $D\gg 1$, independently of $i,j$. Following \cite{Bourgain1}, we set
$$
\tilde{\Sigma}_{N}^{i,j}(D)\equiv
\bigcap_{k=-[2^{j}/\tau]}^{[2^{j}/\tau]}\tilde{\Phi}_{N}(-k\tau)(\tilde{B}_{N}^{i,j}(D))\, ,
$$
where $[2^{j}/\tau]$ stays for the integer part of $2^{j}/\tau$. 
Notice that thanks to \eqref{flotN}, we obtain that the solution of \eqref{odek*} with data $f\in\tilde{\Sigma}_{N}^{i,j}(D)$ satisfies
\begin{equation}\label{eqflot}
\big\|\tilde{\Phi}_{N}(t)(f)\big\|_{Y^s}
\leq D(i+j+1)^{\frac{1}{2}},\quad |t|\leq 2^{j}\,.
\end{equation}
Indeed, for $|t|\leq 2^{j}$, we can find an integer $k\in [-[2^{j}/\tau],[2^{j}/\tau]]$ and 
$\tau_1\in [-\tau,\tau]$ so that $t=k\tau+\tau_1$ and thus 
$u(t)=\tilde \Phi_{N}(\tau_1)\big(\tilde \Phi_{N}(k\tau)(f)\big)$.
Since $f\in\tilde{\Sigma}_{N}^{i,j}(D)$ implies that $\tilde \Phi_{N}(k\tau)(f)\in 
\tilde B_{N}^{i,j}(D)$, we can apply \eqref{flotN} and arrive at \eqref{eqflot}.

By Proposition \ref{rhon}, the measure $\tilde{\rho}_{N}$ is invariant by the flow $\tilde{\Phi}_{N}$. Hence
\begin{eqnarray*}
\tilde{\rho}_{N}\big(E_{N}\backslash\tilde{\Sigma}_{N}^{i,j}(D)\big)
&\leq &
(2[2^{j}/\tau]+1)\tilde{\rho}_{N}\big(E_N\backslash \tilde B_{N}^{i,j}(D)\big)\\
&\leq &
C2^{j}D^{\gamma}(i+j)^{\gamma/2}\tilde{\rho}_{N}\big(E_N\backslash \tilde B_{N}^{i,j}(D)\big)\,.
\end{eqnarray*}
Now, by the large deviation bounds of Lemma~\ref{cfg} 
\begin{equation}\label{zvez}
\tilde{\rho}_{N}\big(E_{N}\backslash\tilde{\Sigma}_{N}^{i,j}(D)\big)\leq
C2^{j}D^{\gamma}(i+j)^{\gamma/2}e^{-cD^2(i+j)}\leq 2^{-(i+j)},
\end{equation}
provided $D\gg 1$, independently of $i,j,N$.

Next, we set
$$
\tilde{\Sigma}_{N}^{i}=\bigcap_{j= 1}^{\infty}\tilde{\Sigma}_{N}^{i,j}(D)\,.
$$
Thanks to (\ref{zvez}),
$
\tilde{\rho}_{N}(E_{N}\backslash \tilde{\Sigma}_{N}^{i})\leq 2^{-i}\,.
$
In addition, using \eqref{eqflot}, we get that there exists $C$ such that for every $i$, every
$N$, every $f\in \tilde{\Sigma}_{N}^{i}$, every $t\in \R$,
\begin{equation*} 
\big\|\tilde \Phi_{N}(t)(f)\big\|_{Y^s}
\leq C(i+2+\log(1+|t|))^{\frac{1}{2}}\,.
\end{equation*}
Indeed for $t\in \R$ there exists $j\in\N$ such that $2^{j-1}\leq 1+|t|\leq 2^j$ and we apply 
\eqref{eqflot} with this $j$.
This proves \eqref{bonn1}. 

Let us now turn to the proof of \eqref{bonn2}.
Consider $ f \in \tilde{\Sigma}^i_N= \cap _{j\geq 1 } \tilde{\Sigma}^{i,j}_N$. Denote by $j_0$ the integer part of $2+ \frac {\log(1+|t_0|)} {\log(2)}$.
According to~\eqref{eqflot}, as soon as $ j\geq j_0$, we have $|t_0|\leq 2^{j-1}$, 
and for any $|t|\leq 2^{j-1}$
\begin{equation}
\label{eq.propagation}
\tilde{\Phi}_N(t) \tilde{\Phi}_N(t_0)f \in \tilde B^{i+2,j-1}_N(D)\subset \tilde B^{i+ j_0+2, j-1}_N(D),
\end{equation}
which implies that 
$$ \tilde{\Phi}_N(t_0)f \in\tilde{\Sigma}^{i+j_0+2, j-1}_N(D),\quad \;\forall \, j> j_0.
$$
On the other hand, the trivial relation (for $j_0 -k > 0$, i.e. $k=1,2,\cdots j_0-1$)
$$ \tilde{B}^{i+2,j_0}_N(D)= \tilde{B}^{i+2+k, j_0-k}_N(D)\subset \tilde{B}^{i+ j_0+2, j_0-k}_N(D),
$$
and~\eqref{eq.propagation} (applied with $j=j_0+1$) implies that for $j< j_0$ and $|t|\leq 2^{j}\leq 2^{j_0-1}$,
$$ \tilde{\Phi}_N(t)\tilde{\Phi}_N(t_0)f \in \tilde{B}^{i+j_0+2, j}_N(D),\quad \;\forall \,1\leq j< j_0,
$$
and consequently 
$$ \tilde{\Phi}_N(t_0)f \in\tilde{\Sigma}^{i+ j_0+2, j}_N(D),\quad \;\forall \, j\geq 1.
$$ 
This proves \eqref{bonn2} and therefore the proof of Proposition~\ref{215} is completed.
\end{proof}
\noindent 
For integers $i\geq 1$ and $N \geq 1$, we define the
cylindrical sets
$$
\Sigma_{N}^{i}\equiv\big\{u\in \mathcal{H}^{-\sigma}\,:\, \Pi_{N}(u)\in \tilde\Sigma_{N}^{i}\big\}.
$$
Next, for $i\geq 1$, we set
\begin{equation*}
\Sigma^{i}=\big\{ u\in \mathcal{H}^{-\sigma}: \exists N_{k}, \lim_{k\rightarrow + \infty} N_k = \infty, 
\exists\, u_{N_{k}}\in  {\Sigma}^{i}_{N_{k}}, \lim_{k\rightarrow + \infty} \|S_{N_k}u_{N_{k}}- u\|_{Y^s}=0\big\}.
\end{equation*}
Observe that $\Sigma^{i}$ is a closed subset of $Y^s$. Indeed, assume that there exists $u_{N_k} \in {\Sigma}^{i}_{N_{k}}, \lim_{k\rightarrow + \infty} \|S_{N_k}u_{N_{k}}- u\|_{Y^s}=0$. Then for any $P\in \mathbb{N}$, as soon as $N_k \gg P$, we have 
$$ \|S_P(u_{N_{k}}- u)\|_{Y^s}= \|S_P(S_{N_k}u_{N_{k}}- u)\|_{Y^s}\leq C  \|S_{N_k}u_{N_{k}}- u\|_{Y^s}\rightarrow 0$$
As a consequence, using~\eqref{bonn1} (with $t=0$), we deduce 
\begin{multline}
 \|S_P(u)\|_{Y^s} \leq \limsup_{k\rightarrow + \infty} \|S_P( u_{N_k})\|_{Y^s} =  \limsup_{k\rightarrow + \infty} \|S_P( u_{N_k})\|_{Y^s}\\
 \leq C \sup_{Q} \|S_Q\|_{\mathcal{L}( L^r( \R))}i^{1/2}
 \end{multline}
and passing to the limit $P\rightarrow + \infty$, we deduce 
$$ u\in Y^s,   \|u\|_{Y^s} \leq C'i^{1/2}.$$ The closeness property is clear. Notice also that we have the following inclusions
\begin{equation}\label{inclusion}
\limsup_{N\to +\infty}\Sigma_{N}^{i}=\bigcap_{N=1}^{\infty}\bigcup_{N_{1}=N}^{\infty}\Sigma_{N_{1}}^{i}\subset \Sigma^{i}.
\end{equation}
Indeed, if $u\in \limsup_{N\to +\infty}\Sigma_{N}^{i}$, there exists $N_k\rightarrow+ \infty$ such that 
$$ \Pi_{N_k}(u) \in \tilde \Sigma^i_{N_k},
$$ and the same proof as above shows that 
$$ u\in Y^s,   \|u\|_{Y^s} \leq C'i^{1/2}.$$
Now, we clearly have 
$$ 
\|S_n u - u \|_{Y^s}= o(1) _{n\rightarrow + \infty},
$$
and since $S_n (\Pi_n (u)) = S_n (u)$,  the sequence $u_{N_k}\equiv \Pi_{N_k}(u)$ is the one ensuring that $u\in \Sigma^i$. 
This proves \eqref{inclusion}. As a consequence of \eqref{inclusion}, we get
\begin{equation}\label{kr1}
\rho\big(\Sigma^{i}\big)\geq \rho\big(\limsup_{N\to +\infty}\Sigma_{N}^{i}\big).
\end{equation}
Using Fatou's lemma, we get
\begin{equation}\label{kr2}
\rho(\limsup_{N\rightarrow\infty}\Sigma_{N}^{i})
\geq 
\limsup_{N\rightarrow \infty}\rho(\Sigma_{N}^{i})\,.
\end{equation}
In the defocusing case, consider $G_{N}(u)=\exp(-\frac1{k+1}\|S_Nu\|^{k+1}_{L^{k+1}(\R)})$ and $G(u)=\exp(-\frac1{k+1}\|u\|^{k+1}_{L^{k+1}(\R)})$. 
In the focusing case, let $G_{N}$ be defined by \eqref{cinm} and $G$ by Theorem \ref{prop}. We have that
$$
\rho(\Sigma_{N}^{i})=\int_{\Sigma_{N}^{i}}G(u)\text{d}\mu(u),
$$
and
$$
{\rho}_{N}(\Sigma_{N}^{i})=\int_{\Sigma_{N}^{i}}G_{N}(u)\text{d}{\mu} _N(u)=\int_{\tilde \Sigma_{N}^{i}}G_{N}(u)\text{d}\tilde \mu(u)=\tilde {\rho}_{N}(\tilde \Sigma_{N}^{i}).
$$
Therefore, thanks to \eqref{conv.faible@}, we get
$$
\lim_{N\rightarrow \infty}\big(\rho({\Sigma}_{N}^{i})-\rho_{N}(\Sigma_{N}^{i})\big)=0\,.
$$
Therefore, using Proposition \ref{215} and \eqref{masse}, we obtain
\begin{equation}
\begin{aligned}
\limsup_{N\rightarrow \infty}\rho(\Sigma_{N}^{i})
& = 
\limsup_{N\rightarrow \infty}{\rho}_{N}(\Sigma_{N}^{i})
= 
\limsup_{N\rightarrow \infty}\tilde{\rho}_{N}({\tilde{\Sigma}}_{N}^{i})
\\
& = 
\limsup_{N\rightarrow \infty}\big(\rho_{N}(Y^s)-2^{-i}\big)
=
\rho(Y^s)-2^{-i}.
\end{aligned}
\end{equation}
Collecting the last estimate and \eqref{kr1}, \eqref{kr2}, we obtain that
\begin{equation}\label{kr5}
\rho\big(\Sigma^{i}\big) \geq \rho(Y^s)-2^{-i}.
\end{equation}
\noindent Now, we set
\begin{equation}\label{def.sigma}
\Sigma\equiv\bigcup_{i=1}^{\infty}\Sigma^{i}.
\end{equation}
Then, by \eqref{kr5}, the set $\Sigma$ is of full $\rho$ measure.
It turns out that one has global existence for any initial condition $f\in \Sigma$.
We now state the global existence results for the problem \eqref{eq}.
\begin{prop}\label{prop.sigma}
For every integer $i\geq 1$ the local solution $u$ of \eqref{eq} 
with initial condition $f\in \Sigma^{i}$ is globally defined and we shall denote it by $u= \Phi(t) f$. 
Moreover, there exists $C>0$ such that for every $f\in \Sigma^{i}$ and every $t \in \R$, 
\begin{equation*}
\|u(t)\|_{Y^s}\leq C\big( i+\log(1+|t|) \big)^{\frac12}.
\end{equation*}
Furthermore, if $(f_{p})_{p\geq 0}\in \Sigma^{i}_{N_{p}}$, $N_{p}\to +\infty$ are so that 
$$
\lim_{p\rightarrow + \infty}\|S_{N_p}f_{p}- f \|_{Y^s}=0, 
$$ 
then 
\begin{equation}\label{eq.limite}
\lim_{p \rightarrow+ \infty}\|u(t)-S_{N_p}(\Phi_{N_{p}}(t)(f_{p}))\|_{Y^s}=0.
\end{equation}
Finally, for every $t\in\R$, $\Phi(t)(\Sigma)=\Sigma$.
\end{prop}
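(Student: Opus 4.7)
My plan is to realise $u$ as the limit of the truncated solutions $v_p(t)\equiv S_{N_p}(\Phi_{N_p}(t)(f_p))$ and to leverage the uniform \emph{a priori} bound from Proposition~\ref{215} to extend $u$ globally while establishing \eqref{eq.limite} simultaneously. Since $S_{N_p}$ commutes with $H$ and $v_p=S_{N_p}\tilde w_p$ with $\tilde w_p=\tilde\Phi_{N_p}(t)(\Pi_{N_p}f_p)$ solving \eqref{odekbis}, the function $v_p$ satisfies the perturbed equation
\begin{equation*}
(i\partial_t-H)v_p = \kappa_0\, S_{N_p}^2\bigl(|v_p|^{k-1}v_p\bigr),\qquad v_p(0)=S_{N_p}f_p.
\end{equation*}
Proposition~\ref{215} then yields $\|v_p(t)\|_{Y^s}\leq C(i+\log(1+|t|))^{1/2}$ uniformly in $p$, and by hypothesis $v_p(0)\to f$ in $Y^s$.

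Fix $T\geq 1$, set $A=C(i+\log(1+T))^{1/2}$ and $\tau=c(2A)^{-\gamma}$ with $c,\gamma$ from Proposition~\ref{wp}. On a subinterval $I=[t_0,t_0+\tau]$ with $\|u(t_0)\|_{Y^s}\leq 2A$, Proposition~\ref{wp} produces $u$ on $I$ with $\|u\|_{X^s_I}\leq 2A+(2A)^{-1}$. Writing the Duhamel formula for $u-v_p$ and splitting
\begin{equation*}
|u|^{k-1}u-S_{N_p}^2(|v_p|^{k-1}v_p)=\bigl(|u|^{k-1}u-|v_p|^{k-1}v_p\bigr)+(1-S_{N_p}^2)(|v_p|^{k-1}v_p),
\end{equation*}
Lemma~\ref{linear} together with Proposition~\ref{multi} (after absorption of $C\tau^\kappa A^{k-1}\|u-v_p\|_{X^s_I}$) yield
\begin{equation*}
\|u-v_p\|_{X^s_I}+\sup_{t\in I}\|u(t)-v_p(t)\|_{Y^s}\leq C\|u(t_0)-v_p(t_0)\|_{Y^s}+C\eta_p,
\end{equation*}
where $\eta_p\equiv\|(1-S_{N_p}^2)(|v_p|^{k-1}v_p)\|_{L^1_{[-T,T]}\mathcal{H}^{s-\eta}}$. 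Iterating over $M\sim T/\tau$ consecutive intervals, the $Y^s$ distance at step $j+1$ is at most $C$ times its value at step $j$ plus $C\eta_p$, hence bounded by $C^M(\|f-v_p(0)\|_{Y^s}+\eta_p)$ after $M$ steps. Letting $p\to\infty$ with $T$ fixed, this distance tends to $0$, so the bootstrap hypothesis $\|u(t)\|_{Y^s}\leq\|v_p(t)\|_{Y^s}+o(1)\leq 2A$ propagates, giving the global solution on $[-T,T]$, the logarithmic bound, and \eqref{eq.limite}.

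The heart of the argument is showing $\eta_p\to 0$. The plan is to invoke Proposition~\ref{multi} with a slightly smaller exponent $\eta'<\eta$ to produce a uniform bound $\|\,|v_p|^{k-1}v_p\|_{L^1_T\mathcal{H}^{s-\eta'}}\leq CT^\kappa A^k$, and then observe that $1-S_{N_p}^2=(1-\chi^2)(H/(2N_p+1))$ is a spectral cut-off supported on $H\gtrsim N_p$, hence maps $\mathcal{H}^{s-\eta'}\to\mathcal{H}^{s-\eta}$ with operator norm $O(N_p^{-(\eta-\eta')/2})$. This trade of a bit of Sobolev regularity for quantitative high-frequency decay is the only subtle point; everything else is a standard continuity/bootstrap argument built on the estimates already established.

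For the invariance $\Phi(t)(\Sigma)=\Sigma$, I use \eqref{bonn2}: for $f\in\Sigma^i$ with approximating sequence $(f_p,N_p)$ one has $\Pi_{N_p}(\Phi_{N_p}(t)(f_p))=\tilde\Phi_{N_p}(t)(\Pi_{N_p}f_p)\in\tilde\Sigma^{i'}_{N_p}$ with $i'=i+[c\log(|t|+1)]+3$, and therefore $\Phi_{N_p}(t)(f_p)\in\Sigma^{i'}_{N_p}$. Combined with \eqref{eq.limite}, which provides precisely the Cauchy sequence required by the definition of $\Sigma^{i'}$, this gives $\Phi(t)f\in\Sigma^{i'}\subset\Sigma$. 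Applying the same reasoning to $\Phi(-t)$ and using the group property $\Phi(-t)\circ\Phi(t)=\mathrm{Id}$ on $\Sigma$ yields the opposite inclusion.
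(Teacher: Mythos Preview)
Your argument is correct and follows the same overall scheme as the paper: a local comparison estimate between $u$ and the truncated flow, iterated over $\sim T/\tau$ subintervals using the uniform bound of Proposition~\ref{215}, and then the invariance $\Phi(t)(\Sigma)\subset\Sigma$ via \eqref{bonn2}. The one genuine difference lies in how you handle the error term in the splitting. The paper writes
\[
|u|^{k-1}u-S_{N_p}^2(|v_p|^{k-1}v_p)=(1-S_{N_p}^2)(|u|^{k-1}u)+S_{N_p}^2\bigl(|u|^{k-1}u-|v_p|^{k-1}v_p\bigr),
\]
so that $(1-S_{N_p}^2)$ acts on the \emph{fixed} element $|u|^{k-1}u\in L^1_T\mathcal{H}^{s-\eta}$ and the convergence to $0$ follows from the soft fact that $S_N\to\mathrm{Id}$ strongly (Lemma~\ref{Stone}). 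You reverse the splitting and apply $(1-S_{N_p}^2)$ to the $p$-dependent term $|v_p|^{k-1}v_p$; since strong convergence alone is not enough here, you compensate by invoking Proposition~\ref{multi} with a slightly smaller $\eta'<\eta$ and using the quantitative spectral bound $\|(1-S_{N_p}^2)\|_{\mathcal{H}^{s-\eta'}\to\mathcal{H}^{s-\eta}}=O(N_p^{-(\eta-\eta')/2})$. This is legitimate (one simply fixes $\varepsilon,r$ for the more demanding value $\eta'$, which then also works for $\eta$), and it has the advantage of yielding an explicit rate in $N_p$, whereas the paper's argument is purely qualitative. Either route closes the estimate; your bootstrap and the deduction of $\Phi(t)(\Sigma)=\Sigma$ from \eqref{bonn2} and \eqref{eq.limite} match the paper's.
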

\begin{proof}
The key point is now the following lemma.
\begin{lemm}\label{lem.limite}
There exist $\Lambda_0>0$, $C>0$, $K>0$ such that the following holds true.
Consider a sequence $u_{0,N_p}\in E_{N_p}$ and $u_0\in Y^s$. 
Assume that there exists $\Lambda>\Lambda_0$ such that 
$$ 
\|u_{0,N_p}\|_{Y^s}\leq \Lambda,\quad \|u_0\|_{Y^s} \leq \Lambda,\quad \lim_{p\rightarrow+ \infty}\|S_{N_p}u_{0, N_p} -u_0\|_{Y^s}=0.
$$
Then if we set $\tau= C \Lambda^{-K}$ then 
$\Phi_{N_p}(t)( u_{0, N_p})$ and $\Phi(t)(u_0)$ exist for $t \in [0, \tau]$ and satisfy 
$$
\|\Phi_{N_p}(t)( u_{0, N_p})\|_{L^\infty_{\tau}Y^s\cap X^s_{\tau}}\leq 
\Lambda +1, \qquad \|\Phi(t)( u_{0})\|_{L^\infty_{\tau} Y^s\cap X^s_{\tau}}\leq \Lambda +1.
$$
Furthermore
$$
\lim_{p\rightarrow+ \infty} \|S_{N_p} \Phi_{N_p}(t)( u_{0, N_p})- \Phi(t) (u_0)\|_{L^\infty((0,\tau);Y^s)} =0.
$$
\end{lemm}
\begin{proof}
The first part of this lemma is a direct consequence of our local 
well posedness results of Propositions~\ref{wp_N}, \ref{wp}. 
For the second part, let us write 
$$
\Phi(t) (u_0)\equiv u= e^{-itH} (u_0) + v, 
\qquad \Phi_{N_p}(t) (u_{0,N_p})\equiv u_p = e^{-itH} (u_{0,N_p}) + v_p, 
$$
and $w_p = v- S_{N_p}v_p$. We have 
$$ 
u-u_p = e^{-itH} (u_0 -S_{N_p}u_{0,N_p})+ w_p
$$
and by assumption, 
$$
\| e^{-itH} (u_0 - S_{N_p}u_{0,N_p})\|_{Y^s}= 
\|u_0 - S_{N_p}u_{0,N_p}\|_{Y^s} = o(1)_{p\rightarrow + \infty}.
$$
Therefore it remains to show that
$
\|w_p\|_{L^\infty_{\tau}Y^s}\leq C \|w_p\|_{L^\infty_{\tau}Y^s}=o(1)_{p\rightarrow + \infty},
$
for $\tau$ chosen as in the statement of the lemma.
Observe that $w_p$ solves the problem
\begin{multline}\label{eq.diff.bis}
(i\partial_t - H) w_p
= \kappa_0|u|^{k-1} u -\kappa_0 S^2_{N_p}( |S_{N_p} u_p|^{k-1} S_{N_p}u_p)
\\
=\kappa_0(\text{Id}- S^2_{N_p} ) ( |u|^{k-1} u) + 
\kappa_0 S^2_{N_p} ( |u|^{k-1} u - |S_{N_p} u_p|^{k-1} S_{N_p}u_p)
\end{multline}
with initial condition $w_p \mid_{t=0} =0$.
Using Proposition~\ref{multi} and Proposition~\ref{wp}, we obtain that for $\eta>0$
$$ 
\| |u|^{k-1} u\|_{L^{1} ((0,\tau); \mathcal{H}^{s-\eta})} 
\leq C \tau^\kappa\|u\|_{X_\tau^s}^{k}
\leq C\tau^\kappa(\Lambda+1)^k
$$
and consequently, 
\begin{equation}\label{eq.estim1} 
\|(\text{Id}- S^2_{N_p})( |u|^{k-1} u)\|_{L^{1} ((0,\tau); \mathcal{H}^{s-\eta})} 
\rightarrow 0 \text{ as } p\rightarrow + \infty\,.
\end{equation}
We estimate the second term in the r.h.s. of~\eqref{eq.diff.bis} by using a direct manipulation
on the expression $|z_1|^{k-1}z_1-|z_2|^{k-1}z_2$ and invoking
Proposition~\ref{multi}. This yields 
\begin{multline}\label{eq.estim2}
\| |u|^{k-1} u - |S_{N_p} u_p|^{k-1} S_{N_p}u_p\|_{L^{1} ((0,\tau); \mathcal{H}^{s-\eta})} 
\\
\leq C\tau^{\kappa}\| u- S_{N_p} u_p\|_{X^s_{\tau}}
(\| u\|^{k-1}_{X^s_{\tau}}+ \| S_{N_p}u_p\|^{k-1}_{X^s_{\tau}})
\\
\leq C\tau^{\kappa}(\Lambda+1)^{k-1}( \| e^{-itH} (u_0 -S_{N_p} (u_{0,N_p})) \|_{Y^s} + \| w_p\|_{X^s_{\tau}})
\\
\leq o(1) _{p\rightarrow + \infty} + C\tau^{\kappa}(\Lambda+1)^{k-1} \| w_p\|_{X^s_{\tau}}\,.
\end{multline}
We deduce from~\eqref{eq.estim1}, \eqref{eq.estim2}, \eqref{eq.diff.bis} and Lemma~\ref{linear}
that if $\eta\ll 1$, 
$$
\|w_k\|_{X^s_{\tau}} 
\leq C\tau^\kappa(\Lambda+1)^{k-1} 
\| w_p\|_{X^s_{\tau}} + o(1)_{p\rightarrow + \infty}\,.
$$
By taking $C\tau^\kappa( \Lambda+1)^{k-1} <1/2$, we infer that 
$
\|w_p\|_{X^s_{\tau}}=o(1)_{p\rightarrow + \infty}.
$
Next using (\ref{gi_pak}) of Lemma~\ref{linear}, we obtain that
$
\|w_p\|_{L^\infty_{\tau}Y^s}=o(1)_{p\rightarrow + \infty}.
$
This completes the proof of Lemma~\ref{lem.limite}.
\end{proof}
Let us now finish the proof of Proposition~\ref{prop.sigma}. 
By assumption, we know that there exist sequences 
$N_p \in \mathbb{N}, u_{N_p}\in \widetilde{\Sigma}^i_{N_p}$ 
(i.e. $ \Pi_{N_p}(u_{N_p}) \in \Sigma^i_{N_k}$) such that 
$$\lim_{p\rightarrow + \infty}\|S_{N_p}u_{N_p}- u_0\|_{Y^s}=0.$$
 Consequently,
by Proposition~\ref{215}, we know that 
\begin{equation}\label{eq.estapriori}
\big\|\tilde{\Phi}_{N_p}(t)(\Pi_{N_p}(u_{N_p}))\big\|_{Y^s}\leq C(i+\log(1+|t|))^{\frac{1}{2}}.
\end{equation}
The strategy of proof consists in proving that as long as the solution to~\eqref{eq} exists, 
we can pass to the limit in~\eqref{eq.estapriori} 
and there exists a constant $C'$ independent of $i$ such that 
\begin{equation}\label{eq.estaprioribis}
\big\|\Phi(t)(u)\big\|_{Y^s}
\leq C' C(i+\log(1+|t|))^{\frac{1}{2}}
\end{equation}
which (taking into account that the norm in $Y^s$ controls the local existence time), 
implies that the solution is global and satisfies~\eqref{eq.estaprioribis} for all times.

Equivalently, let us fix $T>0$ and $\Lambda>\Lambda_0$ (the number $\Lambda_0$ being fixed in 
Lemma~\ref{lem.limite}). We assume 
\begin{equation}\label{eq.estaprioriter}
\big\|\Phi_{N_p}(t)(\Pi_{N_p}(u_{N_p}))\big\|_{Y^s}
\leq \Lambda, \text{ for } |t| \leq T
\end{equation}
and we want to show 
\begin{equation}\label{eq.estaprioriquar}
\big\|\Phi(t)(u_{0})\big\|_{Y^s}
\leq C'\Lambda, \text{ for } |t| \leq T.
\end{equation}
As a first step, let us fix $t=0$. For $Q\in \mathbb{N}$, if 
$N_p \geq Q$, $\Pi_{N_p} \circ S_Q = S_Q$ and consequently, using 
Proposition~\ref{prop.cont}
and the definition of $\Sigma^i$, we obtain 
$$
\|S_Q (u_0)\|_{Y^s}= \lim_{p\to + \infty} \|S_Q \Pi_{N_p}(u_{N_p})\|_{Y^s} \leq C'\Lambda$$
and passing to the limit $Q \rightarrow + \infty$, we deduce 
$$\|u_0\|_{Y^s}= \lim_{Q\rightarrow + \infty}\|S_Q (u_{0})\|_{Y^s}\leq C'\Lambda$$
This implies that the sequences $\Pi_{N_p} u_{N_p}\equiv u_{0,p}$ and $u_0$ satisfy the assumptions 
of Lemma~\ref{lem.limite} (with $\Lambda$ replaced by $C'\Lambda$). As a consequence, we know that 
$$ \lim_{p\rightarrow+ \infty} \| \Phi_{N_p} (t)( \Pi_{N_p} (u_{N_p}))-\Phi(t) (u_0)\|_{L^\infty((0,\tau); Y^s)} =0$$ 
for $\tau\equiv c \Lambda^{-K}$. 
This convergence allows to pass to the limit 
in~\eqref{eq.estaprioriter} for $t= \tau$, using again Proposition~\ref{prop.cont}.
Indeed, fix $Q$, then for $N_p\gg 2Q$, 
$$ 
 \|S_Q \Phi(\tau) (u_0)\|_{Y^s}= \lim_{p\rightarrow\infty} 
\|S_Q  \Phi_{N_p} ( \tau) \Pi_{N_p}( u_{N_p})\|_{Y^s}
$$ 
and using first~\eqref{eq.estaprioriter} and passing to the limit $Q\rightarrow + \infty$, we deduce 
\begin{equation}
\|\Phi(\tau)(u_0)\|_{Y^s}= \lim_{Q\rightarrow + \infty}\|S_Q \Phi(\tau) (u_0)\|_{Y^s}
\leq \sup_{Q} \|S_Q\|_{\mathcal{L} ( Y^s)} \Lambda.
\end{equation}
Now, we can apply the results in Lemma~\ref{lem.limite}, with the same $\Lambda$ as in 
the previous step, which implies that ~\eqref{eq.estaprioriquar} holds for $t\in[0, 2\tau]$, 
and so on and so forth. 

Notice here that at each step the a priori bound does not get worse, because we only use the results in 
Lemma~\ref{lem.limite} to obtain the convergence of $\|\Phi_{N_p} ( t) (\Pi_{N_p}( u_{N_p}))- \Phi(t) u_0\|_{Y^s} $ to 
$ 0$, and then obtain the estimates on the norm $ \|\Phi(t) (u_0)\|_{Y^s}$ by passing to the 
limit in~\eqref{eq.estaprioriter} (applying first $S_Q$, passing to the limit $p\rightarrow + \infty$, then to the limit $Q\rightarrow + \infty$). A completely analogous argument holds for the negative times $t$.

In order to prove the last statement in Proposition~\ref{prop.sigma} we observe that, according to~\eqref{bonn2} 
there exists $c>0$ such that for any $t \in \mathbb{R}$,
$$ 
\Phi(t) ( \Sigma^i) \subset \Sigma^{i + [c\log (|t| + 1)] +3}
$$ 
which is a straightforward consequence of~\eqref{eq.limite} and (\ref{bonn2}).
As consequence we have $\Phi(t)(\Sigma)\subset\Sigma$ and thanks to the reversibility of the flow $\Phi(t)$, we
infer that $\Phi(\Sigma)=\Sigma$.
This completes the proof of Proposition~\ref{prop.sigma}.
\end{proof}
\section{Measure invariance}
In this section, we prove the last part of Theorem~\ref{thm4}.
Recall that we see $\rho$ as a finite Borel measure on $Y^s$.
Let $\Sigma$ be the set of full $\rho$ measure constructed
in the previous section. This is the set involved in the statement of Theorem~\ref{thm4}.
Recall that thanks to Proposition~\ref{prop.sigma}, $\Phi(t)(\Sigma)=\Sigma$ and 
thanks to the reversibility of the flow $\Phi(t)$, it suffices to prove that for every $\rho$ measurable
set $A\subset \Sigma$ and every $t\in\R$, $\rho(A)\leq \rho(\Phi(t)(A))$. We 
perform several reductions which will allow us to reduce the matters to compact sets $A$ and small times $t$.
First by the regularity properties of $\rho$, we may assume that $A$ is a closed set of $Y^s$.
Then thanks to Lemma~\ref{kompakt}, it suffices to prove $\rho(K)\leq \rho(\Phi(t)(K))$ for $K$ a compact set 
of $Y^s$. Let us fix a compact $K$ of $Y^s$ and a time $t>0$ (the case $t<0$ is analogous).
Thanks to Proposition~\ref{prop.sigma}, there exists $R>1$ such that
$
\{\Phi(\tau)(K),0\leq \tau\leq t\}\subset B_{R},
$
where here and for future references $B_R$ denotes the open ball of $Y^s$ centered at the origin and of
radius $R$.
We have the following statement comparing $\Phi(t)$ and $\Phi_{N}(t)$ for small (but uniform) times and 
compacts contained in $B_R$.
\begin{lemm}\label{lem.limite.bis}
There exist $c>0$ and $\gamma>0$ such that the following holds true.
For every $R>1$, every compact $K$ of $B_R$ and every $\eps>0$ there exists $N_0\geq 1$ 
such that for every $N\geq N_0$, every $u_0\in K$, every $\tau\in [0, cR^{-\gamma}]$,
$
\|\Phi(\tau)(u_0)-\Phi_{N}(\tau)(u_0)\|_{Y^s}<\eps.
$
\end{lemm}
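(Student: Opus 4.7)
The plan is to follow the scheme of Lemma~\ref{lem.limite}, upgrading its $N$-by-$N$ comparison to a version uniform in the initial data $u_0 \in K$. First, invoking Propositions~\ref{wp} and~\ref{wp_N} with $A=R$ produces constants $c_0, \gamma_0>0$ such that, for every $u_0 \in B_R$ and every $N$, both $u(t)=\Phi(t)u_0$ and $u_N(t)=\Phi_N(t)u_0$ are defined on $[0, c_0 R^{-\gamma_0}]$ and bounded there by $R+1$ in $X^s_{\tau}$.

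Next, I set $w_N = u - u_N$, so $w_N(0)=0$ and, exactly as in~\eqref{eq.diff.bis},
\begin{equation*}
(i\partial_t-H)w_N = \kappa_0(\mathrm{Id}-S_N^2)(|u|^{k-1}u) + \kappa_0 S_N^2\bigl(|u|^{k-1}u - |S_N u_N|^{k-1}S_N u_N\bigr).
\end{equation*}
Combining Lemma~\ref{linear} with the multilinear estimate of Proposition~\ref{multi}, the uniform boundedness of $S_N$ on $X^s_{\tau}$ (Proposition~\ref{prop.cont}), and the splitting $u - S_N u_N = (\mathrm{Id}-S_N)u + S_N w_N$, I obtain, for some $\eta>0$ arbitrarily small,
\begin{equation*}
\|w_N\|_{X^s_{\tau}} \leq C\|(\mathrm{Id}-S_N^2)(|u|^{k-1}u)\|_{L^1_{\tau}\mathcal{H}^{s-\eta}} + C\tau^\kappa R^{k-1}\bigl(\|(\mathrm{Id}-S_N)u\|_{X^s_{\tau}} + \|w_N\|_{X^s_{\tau}}\bigr).
\end{equation*}
Shrinking $c, \gamma$ so that $C\tau^\kappa R^{k-1}\leq \tfrac12$ on $[0, cR^{-\gamma}]$ absorbs the last summand; then~\eqref{gi_pak} yields the analogous bound for $\|w_N\|_{L^\infty_{\tau} Y^s}$.

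The main obstacle, and the reason the compactness of $K$ enters, is to show that both error terms on the right tend to $0$ \emph{uniformly} over $u_0 \in K$ as $N \to \infty$. For this I use that, thanks to the Lipschitz estimate in Proposition~\ref{wp}, the solution map $u_0 \mapsto \Phi(\cdot)u_0$ is continuous from $B_R$ (viewed in $Y^s$) into $X^s_{\tau}$, so $\{\Phi(\cdot)u_0 : u_0\in K\}$ is compact in $X^s_{\tau}$. The multilinear estimate of Proposition~\ref{multi} upgrades this to compactness of $\{|u|^{k-1}u : u_0 \in K\}$ in $L^1_{\tau}\mathcal{H}^{s-\eta}$. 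Since $S_N^2 = \chi^2(H/(2N+1))\to \mathrm{Id}$ strongly on $\mathcal{H}^{s-\eta}$ (spectral truncation), the strong convergence becomes uniform on compacts, whence $\sup_{u_0\in K}\|(\mathrm{Id}-S_N^2)(|u|^{k-1}u)\|_{L^1_{\tau}\mathcal{H}^{s-\eta}}\to 0$. The same mechanism, together with Proposition~\ref{prop.cont} and the density of Schwartz functions in each component defining $X^s_{\tau}$, gives $\sup_{u_0\in K}\|(\mathrm{Id}-S_N)u\|_{X^s_{\tau}}\to 0$. Choosing $N_0$ so large that both suprema lie below $\eps/(2C)$ concludes the proof.
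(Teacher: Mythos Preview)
Your argument is correct and follows the same overall scheme as the paper, with one cosmetic slip: since you set $w_N=u-u_N$ (not $v-S_N v_N$ as in Lemma~\ref{lem.limite}), the right-hand side of the equation for $w_N$ carries $S_N$, not $S_N^2$; this changes nothing in the subsequent estimates. The paper's proof takes a slightly less direct route: it introduces two auxiliary cut-offs $S_{N,1},S_{N,2}$ with $S_{N,1}S_N=S_N$ and $S_{N,2}S_{N,1}=S_{N,1}$, splits $\Phi(\tau)u_0-\Phi_N(\tau)u_0$ into $(1-S_{N,1})(\cdots)+S_{N,1}(\cdots)$, observes that $(1-S_{N,1})\Phi_N(\tau)u_0=(1-S_{N,1})e^{-i\tau H}u_0$ (because the Duhamel part of $\Phi_N$ lies in the range of $S_N$), and then reduces the $S_{N,1}$-piece to the setup of Lemma~\ref{lem.limite} with $S_{N,2}$ playing the role of $S_{N_p}$. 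Both arguments ultimately rest on the same mechanism you identify---uniform boundedness of $S_N$ on $X^s_\tau$ and $Y^s$ plus strong convergence $S_N\to\mathrm{Id}$, upgraded to uniform convergence on the compact image of $K$ under the (Lipschitz) solution map. Your version is arguably cleaner in that it avoids the extra cut-offs; the paper's version has the minor advantage of making the cancellation of the high-frequency linear pieces explicit rather than hiding it in the identity $w_N(0)=0$.
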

\begin{proof}To prove this lemma, take two new cut off $S_{N,i}= \chi_i(\frac{ H} { 2N+1})$ with $\chi_1 \chi= \chi$, $\chi_2 \chi_1= \chi_1$ so that $ S_{N,1} S_N= S_N$, $S_{N,2}S_{N,1} = S_{N,1}$. Notice first that
\begin{multline}
\|\Phi(\tau)(u_0)-\Phi_{N}(\tau)(u_0)\|_{Y^s}\leq \|(1-S_{N,1})\bigl(\Phi(\tau)(u_0)-\Phi_{N}(\tau)(u_0)\bigr)\|_{Y^s}\\
+ \|S_{N,1}\bigl(\Phi(\tau)(u_0)-\Phi_{N}(\tau)(u_0)\bigr)\|_{Y^s}.
\end{multline}
To bound the first term, we notice that 
$$ \|(1-S_{N,1})\bigl(\Phi_N(\tau)(u_0)\bigr)\|_{Y^s}=  \|(1-S_{N,1})\bigl(e^{itH}u_0\bigr)\|_{Y^s}=  \|(1-S_{N,1})(u_0)\|_{Y^s},$$
and
$$ \lim_{N\rightarrow = \infty} \|(1-S_{N,1})\bigl(\Phi(\tau)(u_0)\bigr)\|_{Y^s} = 0$$
uniformly with respect to $u_0$ in a compact set of $Y^s$.
To bound the second term, we notice that 
$$
\|S_{N,1}\bigl(\Phi(\tau)(u_0)-\Phi_{N}(\tau)(u_0)\bigr)\|_{Y^s}= \|S_{N,1}\bigl(\Phi(\tau)(u_0)-{S}_{N,2} \Phi_{N}(\tau)(u_0)\bigr)\|_{Y^s}.
$$
 Now to estimate this term we proceed as in the proof of Lemma~\ref{lem.limite}, the only additional point being 
the observation that ${S}_{N,2}(u)$ converges to $u$ in $Y^s$, uniformly with respect to $u$ in a compact 
of $Y^s$.
\end{proof}
We next observe that we only need to prove 
$\rho(K)\leq \rho(\Phi(\tau)(K))$ for $\tau\in [0, cR^{-\gamma}]$, where $R$ and $\gamma$ are fixed by
Lemma~\ref{lem.limite.bis}. Then we can iterate the inequality on the same time intervals since we know that
$\Phi(\tau)(K)$ remains included in $B_R$ as far as $\tau\in [0, t]$.
Using \eqref{limite}, Lemma~\ref{lem.limite.bis} and 
the well-posedness result of Proposition~\ref{wp_N} (notice that, though only stated for the flow $\tilde \Phi_N(t)$ on $E_N$, the result holds clearly for the flow $ \Phi_N = (e^{itH}, \tilde \Phi_N(t))$ on $E_N^\perp \times E_N$), we can write
\begin{multline*}
\rho(\Phi(\tau)(K)+{B_{2\eps}})= \lim_{N\rightarrow\infty}
\rho_{N}(\Phi(\tau)(K)+{B_{2\eps}})
\\
\geq
\limsup_{N\rightarrow\infty}
\rho_{N}(\Phi_N(\tau)(K)+B_{\eps})
\geq 
\limsup_{N\rightarrow\infty}
\rho_{N}(\Phi_N(\tau)(K+B_{\alpha\eps})),
\end{multline*}
where $\alpha$ is a fixed constant depending on $R$ but independent of $\eps$.
Next, using the invariance of the measure ${\rho}_N$ under the flow $\Phi_{N}(t)$ and using once again
\eqref{limite}, we can write
$$
\rho(\Phi(\tau)(K)+{B_{2\eps}})\geq
\limsup_{N\rightarrow\infty}
\rho_{N}(K+B_{\alpha\eps})
= \rho(K+B_{\alpha\eps})\geq \rho(K)\,.
$$
Using that $\Phi(t) (K)$ is closed and letting $\eps$ to zero, the dominated convergence theorem implies that $\rho(\Phi(\tau)(K))\geq \rho(K)$.
This proves the measure invariance.
The proof of Theorem~\ref{thm4} is therefore completed.
\section{Proof of Theorem~\ref{thm5}}
Suppose that $v(s,y)$ is a solution of the problem
\begin{equation}\label{C1}
i\partial_sv+\partial_{y}^2v=|v|^{k-1}v,\quad s\in \R,\quad y\in\R.
\end{equation}
We define $u(t,x)$ for $|t|<\frac{\pi}{4}$, $x\in\R$ by
\begin{equation}\label{C2}
u(t,x)=\frac{1}{\cos^{\frac{1}{2}}(2t)}v\big(\frac{\tan(2t)} 2,\frac{x}{\cos(2t)}\big)
e^{-\frac{ix^2{\rm tg}(2t)}{2}}\,.
\end{equation}
We then can check that $u$ solves the problem
\begin{equation}\label{C3}
i\partial_t u-Hu=\cos^{\frac{k-5}{2}}(2t)|u|^{k-1}u,\quad |t|<\frac{\pi}{4},\, x\in\R.
\end{equation}
One also has that the map \eqref{C2} sends solutions of the linear Schr\"odinger 
equation without harmonic potential to solutions of the linear Schr\"odinger 
equation with harmonic potential. 
We refer to \cite{Carles2} for a use of \eqref{C2} in the context of scattering for 
$L^2$ critical problems,
i.e. quintic nonlinearities in $1d$. 
The problem (\ref{C3}) has also the following Duhamel formulation
\begin{equation}\label{C3-Duhamel}
u(t)=e^{-i(t-t_0)H}(u(t_0))-i
\int_{t_0}^t e^{-i(t-\tau)H}\big(\cos^{\frac{k-5}{2}}(2\tau)|u(\tau)|^{k-1}u(\tau)\big)d\tau
\end{equation}
with $t_0,t\in (-\frac{\pi}{4},\frac{\pi}{4})$.
The local analysis of (\ref{C3}) will be applied to (\ref{C3-Duhamel}) which fits well in the framework
of Propositions~\ref{wp_N},~\ref{wp}.
By the transformation (\ref{C2}) we may link the solutions of (\ref{C1}) on $\R\times\R$ to the
solutions of (\ref{C3}) on $(-\pi/4,\pi/4)\times\R$.
The results of Theorem~\ref{thm5} will therefore be a consequence of the following local in time 
(but large data) result concerning (\ref{C3}), together with the observation that thanks to (\ref{C2}) 
the $Y^s$ and ${\mathcal H}^s$ 
convergence in the context of \eqref{C3} implies the $Y^s$ and ${\mathcal H}^s$ convergence for the original 
problem \eqref{C1}.
\begin{prop}\label{sabota}
The equation (\ref{C3}) has $\mu$ almost surely a unique solution in $C([-\frac{\pi}{4},\frac{\pi}{4}];Y^s)$.
Moreover we can write the solution as 
$$
u(t)=e^{-itH}(f^{\pm})+w^{\pm}(t),
$$
with $f^{\pm}\in Y^s$ and where $w^{\pm}$ are such that
$$
\lim_{t\rightarrow\pm \pi/4}\|w^{\pm}(t)\|_{{\mathcal H}^s}=0\,.
$$
\end{prop}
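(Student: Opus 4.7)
Since $|\cos^{(k-5)/2}(2t)|\leq 1$ on $(-\pi/4,\pi/4)$ for $k\geq 5$, the multilinear estimate Proposition~\ref{multi} and the linear Duhamel estimate Lemma~\ref{linear} apply verbatim to~\eqref{C3-Duhamel}: the local well-posedness statements of Propositions~\ref{wp_N} and~\ref{wp} extend to~\eqref{C3}, uniformly in $N$, with the same spaces $X^s_T$, $Y^s$ and the same lifetime $\tau\gtrsim\|u_0\|_{Y^s}^{-\gamma}$. The entire globalization machinery of Sections~7--9 can then be transported, the only new ingredient being a \emph{monotonicity} replacing the invariance of the Gibbs measure, since \eqref{C3} is non-autonomous and admits no time-independent conservation law.

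\textbf{Monotonicity.} On $E_N$, the flow $\tilde\Phi_N(t)$ of $i\partial_tu-Hu=\cos^{(k-5)/2}(2t)\,S_N(|S_Nu|^{k-1}S_Nu)$ is Hamiltonian with the time-dependent Hamiltonian
\begin{equation*}
\tilde J_t(u)=\tfrac12\|H^{1/2}u\|_{L^2}^2+\tfrac{\cos^{(k-5)/2}(2t)}{k+1}\|S_Nu\|_{L^{k+1}}^{k+1}.
\end{equation*}
A direct computation along solutions gives $\tfrac{d}{dt}\tilde J_t(u(t))=\tfrac{1}{k+1}\tfrac{d}{dt}[\cos^{(k-5)/2}(2t)]\,\|S_Nu(t)\|_{L^{k+1}}^{k+1}$, which is non-positive on $[0,\pi/4)$ and non-negative on $(-\pi/4,0]$ (and vanishes identically when $k=5$). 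Combining this with the Liouville theorem on $E_N$ and setting
\begin{equation*}
\text{d}\nu_{t,N}(u)=\e^{-\tfrac{\cos^{(k-5)/2}(2t)}{k+1}\|S_Nu\|_{L^{k+1}}^{k+1}}\text{d}\mu(u),
\end{equation*}
we obtain the key inequality $\nu_{t,N}(\tilde\Phi_N(t)(A))\geq\nu_{0,N}(A)$ for every Borel $A$ and every $t\in(-\pi/4,\pi/4)$. Note that $\nu_{0,N}$ coincides with $\tilde\rho_N$ of~\eqref{tilda} and that the density of $\nu_{t,N}$ with respect to $\mu$ is uniformly bounded by $1$.

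\textbf{Globalization and scattering.} With this inequality the Bourgain construction of Section~\ref{Section.Global} proceeds as follows: the invariance $\tilde\rho_N(\tilde\Phi_N(-k\tau)B)=\tilde\rho_N(B)$ used in Proposition~\ref{215} is replaced by $\nu_{0,N}(\tilde\Phi_N(-k\tau)B)\leq\nu_{k\tau,N}(B)\leq\mu(B)$, and the Gaussian large deviation bound of Lemma~\ref{cfg} (which is a statement about $\mu$) still delivers $\nu_{0,N}(E_N\setminus\tilde\Sigma^{i,j}_N(D))\leq 2^{-(i+j)}$ for $D$ large. Proposition~\ref{prop.sigma} then produces a $\mu$-full-measure set $\Sigma$ on which the unique solution of \eqref{C3} exists on $(-\pi/4,\pi/4)$ with $\|u(t)\|_{Y^s}$ uniformly bounded (the $\log(1+|t|)$ factor is harmless on this bounded interval). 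For scattering at $\pi/4$, write $v(t)=u(t)-\e^{-itH}f$. Choosing $s$ strictly below the upper bound of~\eqref{restriction_s} and running the local theory at the slightly higher regularity $s+\eta$ (still admissible by Lemma~\ref{cfg}), Proposition~\ref{multi} provides $\cos^{(k-5)/2}(2\tau)|u|^{k-1}u\in L^1_{(0,\pi/4)}\H^s$. Hence $\e^{i\tau H}v(\tau)$ is Cauchy in $\H^s$ as $\tau\to\pi/4^-$, with limit $g^+\in\H^s$; setting $f^+=f+g^+\in Y^s$ (via $\H^s\hookrightarrow Y^s$) yields $u(t)-\e^{-itH}f^+=\e^{-itH}(\e^{itH}v(t)-g^+)\to 0$ in $\H^s$. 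The treatment near $-\pi/4$ is symmetric, and $u\in C([-\pi/4,\pi/4];Y^s)$ follows by extending through the limits.

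\textbf{Main obstacle.} The central novelty is the absence of a time-independent Gibbs measure preserved by \eqref{C3}, which forces the use of the one-sided inequality above in place of the invariance exploited in Section~\ref{Section.Global}. That this suffices for the Bourgain-type globalization rests on the elementary but crucial observation that the density of $\nu_{t,N}$ relative to $\mu$ is uniformly bounded by $1$, so the Gaussian large deviation estimates of Lemma~\ref{cfg} carry over without any degradation. A secondary delicate point is recovering $\H^s$-scattering from multilinear estimates that lose a small $\eta>0$ derivative; this is why, in the spirit of Theorem~\ref{thm5}, the exponent $s$ must be taken strictly below $\min(1/2,(k-1)/6)$.
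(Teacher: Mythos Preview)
Your proposal is correct and follows the same route as the paper: the monotonicity of the time-dependent energy (the paper's Lemma~\ref{lyapounov}), the resulting one-sided measure inequality $\tilde\rho_N(A)\leq\tilde\mu_N(\tilde\Phi_N(t,0)(A))$ (Lemma~\ref{lyapounov_conseq}, of which your $\nu_{t,N}$ formulation is an equivalent variant since its density is $\leq 1$), the Bourgain globalization of Proposition~\ref{215_bis}, and the Duhamel scattering via~\eqref{scat}. One small caution on notation: since~\eqref{C3_N} is non-autonomous the paper uses the two-parameter flow $\tilde\Phi_N(t_1,t_2)$ and builds $\tilde\Sigma^i_N(D)=\bigcap_k\tilde\Phi_N(k\tau,0)^{-1}(\tilde B^i_N(D))$ rather than your autonomous-style $\tilde\Phi_N(-k\tau)$, but the content of the argument is identical.
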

\begin{proof}[Proof of Proposition~\ref{sabota}] 
The proof of this proposition is very similar in spirit to the proof of Theorem~\ref{thm4}.
The local analysis is essentially the same.
There is however a nontrivial modification in the globalization arguments 
because of the lack of energy conservation of (\ref{C3}).
We consider the ODE
\begin{equation}\label{C3_N}
i\partial_t u-Hu=\cos^{\frac{k-5}{2}}(2t)S_N(|S_Nu|^{k-1}S_Nu),\quad u(0)\in E_N.
\end{equation}
One may multiply \eqref{C3_N} by $\bar{u}$ and integrate over $\R$ to obtain that the $L^2$
norm is conserved by the flow and combining this fact with the local existence theory of
ODE's, we obtain that the ODE (\ref{C3_N}) with phase space $E_N$ has
a unique global in time solution.
For two real numbers $t_1$, $t_2$
let us denote by $\tilde\Phi_{N}(t_1,t_2)$ the flow of (\ref{C3_N}) from $t_1$ to $t_2$.
We have the following monotonicity property for the solutions of (\ref{C3_N}).
\begin{lemm}\label{lyapounov}
Set
$$
\mathcal{E}_{N}(t,u(t))=\frac{1}{2}\|\sqrt{H}\,u(t)\|_{L^2(\R)}^2+\frac{\cos^{\frac{k-5}{2}}(2t)}{k+1}
\|S_Nu(t)\|_{L^{k+1}(\R)}^{k+1}\,.
$$
Then the solution of \eqref{C3_N} satisfies
$$
\mathcal{E}_{N}(t,u(t))\leq E_{N}(0,u(0)),\quad |t|\leq \frac{\pi}{4}\,.
$$
\end{lemm}
\begin{proof}
A direct computation shows that along the flow of (\ref{C3_N}) one has
$$
\frac{d}{dt}\big(\mathcal{E}_{N}(t,u(t))\big)=
-\frac{(k-5)\sin(2t)\cos^{\frac{k-5}{2}}(2t)}{k+1}
\|S_Nu(t)\|_{L^{k+1}(\R)}^{k+1}\,.
$$
Therefore the function $\mathcal{E}_{N}(t,u(t))$ increases on the interval $[-\pi/4,0]$ and decreases on the interval
$[0,\pi/4]$, and attain its maximum at  $0$. This completes the proof of Lemma~\ref{lyapounov}.
\end{proof}
 We shall prove that \eqref{C3} is well-posed on $[-\pi/4,\pi/4]$ $\rho$-almost surely which in turn will imply the claimed well-posedness $\mu$ a.s.
The result of Lemma~\ref{lyapounov} implies the following key measure monotonicity property.
\begin{lemm}\label{lyapounov_conseq}
For every Borel set $A$ of $E_N$ and every $|t|\leq \frac{\pi}{4}$,
$$
\tilde{\mu} _N(\tilde\Phi_{N}(t,0)(A))\geq \tilde\rho_{N}(A).
$$
\end{lemm}
\begin{proof}
By definition
$$
\tilde{\mu} _N(\Phi_{N}(t,0)(A))=
d_{N}\int_{\tilde\Phi_{N}(t,0)(A)}e^{-\frac{1}{2}\|\sqrt{H}\,u\|_{L^2(\R)}^2}du,
$$
where $du$ is the Lebesgue measure on $E_N$ induced by $\C^{2(N+1)}$ by the map \eqref{transport}.
Let us perform the variable change $u\mapsto\tilde \Phi_{N}(t,0)(u)$. 
We can apply the result of Lemma~\ref{gentchev} to obtain that the Jacobian of this variable change
is one (the divergence free assumption can be readily checked by expressing $\tilde\Phi_{N}(t,0)(u)$ in terms
of its decomposition with respect to $h_0,\cdots h_{N}$). Thus we get
\begin{multline*}
\tilde{\mu} _N(\tilde\Phi_{N}(t,0)(A)) = 
d_{N}\int_{A}e^{-\frac{1}{2}\|\sqrt{H}\,\tilde \Phi_{N}(t,0)(u)\|_{L^2(\R)}^2}du
\\
\geq 
d_{N}\int_{A}e^{-\frac{1}{2}\|\sqrt{H}\,\tilde \Phi_{N}(t,0)(u)\|_{L^2(\R)}^2-
\frac{\cos^{\frac{k-5}{2}}(2t)}{k+1}
\|S_N\,\tilde \Phi_{N}(t,0)(u)\|_{L^{k+1}(\R)}^{k+1}}du\,.
\end{multline*}
Using Lemma~\ref{lyapounov} we hence obtain
$$
\tilde{\mu} _N(\tilde\Phi_{N}(t,0)(A))
\geq
d_{N}\int_{A}e^{-\frac{1}{2}\|\sqrt{H}\,u\|_{L^2(\R)}^2-
\frac{1}{k+1}
\|S_N\,u\|_{L^{k+1}(\R)}^{k+1}}du
=\tilde{\rho}_{N}(A).
$$
This completes the proof of Lemma~\ref{lyapounov_conseq}.
\end{proof}
For $I$ in interval, we can define the spaces $X^s_{I}$ similarly to the spaces $X^s_{T}$ by 
replacing $[-T,T]$ by $I$. We have the following well-posedness result concerning (\ref{C3_N}).
\begin{prop}\label{wp_N_bis}
There exist $C>0$, $c\in (0,1)$, $\gamma>0$, $\kappa>0$ such that
for every $A\geq 1$ if we set $T=cA^{-\gamma}$ then for every $N\geq 1$, every 
$t_0\in [-\frac{\pi}{4},\frac{\pi}{4}]$, every 
$u_0\in E_N$ satisfying
$\|u_0\|_{Y^s}\leq A$ there exists a unique solution of (\ref{C3_N})
with data $u(t_0)=u_0$
on the interval $I=[t_0-T,t_0+T]$ such that
$\|u\|_{X^s_{I}}\leq A+A^{-1}.$ 
In addition for $t\in I$,
\begin{equation}\label{sot}
\|u(t)\|_{Y^s}\leq A+A^{-1}.
\end{equation}
Moreover, if $u$ and $v$ are two solutions with data $u_0$ and $v_0$ 
respectively, satisfying $\|u_0\|_{Y^s}\leq A$, $\|v_0\|_{Y^s}\leq A$ then
$\|u-v\|_{X^s_I}\leq C\|u_0-v_0\|_{Y^s}$ and for $t\in I$,
$$
\|u(t)-v(t)\|_{Y^s}\leq C\|u_0-v_0\|_{Y^s}\,.
$$
Finally, if $J\subset I$ is an interval, then for $\eta>0$,
\begin{equation}\label{scat}
\|\int_{J} e^{-i(t-\tau)H}\big(\cos^{\frac{k-5}{2}}(2\tau)|u(\tau)|^{k-1}u(\tau)\big)d\tau\|_{{\mathcal H}^{s-\eta}}
\leq C|J|^{\kappa}A.
\end{equation}
\end{prop}
\begin{proof}
The proof of this statement is completely analogous to that of Proposition~\ref{wp_N},
one needs to observe that in Lemma~\ref{linear} and Proposition~\ref{multi} one may replace $[-T,T]$ by an
arbitrary interval, $T$ by the size of this interval and one may add the factor $\cos^{\frac{k-5}{2}}(2\tau)$
with the same conclusion. The only additional point is the estimate \eqref{scat}.
To prove estimates \eqref{scat}, we use that 
$$
\|\int_{J}
e^{-i(t-\tau)H}\big(\cos^{\frac{k-5}{2}}(2\tau)F(\tau)\big)d\tau\|_{{\mathcal H}^{s-\eta}}
\leq C\|F\|_{L^1_{J}{\mathcal H}^{s-\eta}}
$$
and apply the estimates of Proposition~\ref{multi}.
\end{proof}
The rest of the proof of Proposition~\ref{sabota} is very similar to the existence part of
Theorem~\ref{thm4}. We start by the counterpart of Proposition~\ref{215}.
\begin{prop}\label{215_bis}
There exists a constant $C>0$ such that for all $i,N\in \N^{*}$, 
there exists a $\rho_{N}$ measurable set $\tilde\Sigma_{N}^{i}\subset E_{N}$ so that 
for all $i,N\in \N^{*}$
\begin{equation*}
\tilde\rho_{N}( E_{N}\backslash \tilde \Sigma_{N}^{i})\leq 2^{-i}.
\end{equation*}
For all $f\in \tilde \Sigma_{N}^{i}$ and $t\in [-\frac{\pi}{4},\frac{\pi}{4}]$
\begin{equation*}
\|\tilde\Phi_{N}(t,0)f\|_{Y^s}\leq Ci^{\frac12}.
\end{equation*}
\end{prop}
\begin{proof}
We set, for $i$ an integer $\geq 1$, 
$
\tilde B_{N}^{i}(D)\equiv
\big\{u\in E_{N}\,:\,\|u\|_{Y^s}\leq D i^{\frac{1}{2}}\big\},
$
where the number $D\gg 1$ (independent of $i,N$) will be fixed later. 
Thanks to Proposition~\ref{wp_N_bis}, there exist $c>0$, $\gamma>0$ only depending on $s$ 
such that if we set $\tau \equiv c D^{-\gamma}i^{-\gamma/2}$ then for every $t_1,t_2$,
such that $|t_1-t_2|\leq \tau$,
\begin{equation}\label{flotN_bis}
\tilde\Phi_{N}(t_1,t_2)\big(\tilde B_{N}^{i}(D)\big)\subset 
\big\{u\in E_{N}\,:\,\|u\|_{Y^s}\leq D (i+1)^{\frac 12}\big\}\, ,
\end{equation}
provided $D\gg 1$, independently of $i$. Set
$$
\tilde \Sigma_{N}^{i}(D)\equiv
\bigcap_{k=-[\pi/4\tau]}^{[\pi/4\tau]}
\tilde\Phi_{N}(k\tau,0)^{-1}(\tilde B_{N}^{i}(D))\, .
$$ 
Notice that thanks to \eqref{flotN_bis}, 
we obtain that the solution of \eqref{C3} with data $f\in\tilde \Sigma_{N}^{i}(D)$ satisfies
\begin{equation}\label{eqflot_bis}
\big\|\tilde\Phi_{N}(t,0)(f)\big\|_{Y^s}
\leq D(i+1)^{\frac{1}{2}},\quad |t|\leq \frac{\pi}{4}\,.
\end{equation}
Indeed, for $|t|\leq \frac{\pi}{4}$, we can find an integer $k\in [-[\pi/4\tau],[\pi/4\tau]]$ and 
$\tau_1\in [-\tau,\tau]$ so that $t=k\tau+\tau_1$ and thus 
$$
\tilde \Phi_{N}(t,0)(f)=\tilde\Phi_{N}(t,k\tau)\tilde\Phi_{N}(k\tau,0)(f).
$$
Since $f\in\tilde\Sigma_{N}^{i}(D)$ implies that $\tilde\Phi_{N}(k\tau,0)(f)\in 
\tilde B_{N}^{i}(D)$, we can apply \eqref{flotN_bis} and arrive at \eqref{eqflot_bis}.
It remains to evaluate the $\tilde\rho_{N}$ complementary measure of the set $\tilde\Sigma_{N}^{i}(D)$.
Using Lemma~\ref{lyapounov_conseq}, we can write
\begin{eqnarray*}
\tilde\rho_{N}\big( E_{N}\backslash\tilde\Sigma_{N}^{i}(D)\big)
&\leq &
(2[\pi/4\tau]+1)\rho_{N}\big(\tilde\Phi_{N}(k\tau,0)^{-1}(E_N\backslash \tilde B_{N}^{i}(D))\big)\\
&\leq &
CD^{\gamma}i^{\gamma/2}\tilde{\mu} _N\big(E_N\backslash \tilde B_{N}^{i}(D)\big)\,.
\end{eqnarray*}
By the large deviation bounds of Lemma~\ref{cfg}, we get
\begin{equation*}
\tilde\rho_{N}\big( E_{N}\backslash\tilde \Sigma_{N}^{i}(D)\big)\leq
CD^{\gamma}i^{\gamma/2}e^{-cD^2i}\leq 2^{-i},
\end{equation*}
provided $D\gg 1$, independently of $i,N$.
This completes the proof of Proposition~\ref{215_bis}.
\end{proof}
Since we are only concerned with a well-posedness statement, we need to prove less 
compared with Theorem~\ref{thm4}
(we do not need to prove that the statistical ensemble is a set reproduced by the flow).
For integers $i\geq 1$ and $N \geq 1$, we define the cylindrical sets
$$
\Sigma_{N}^{i}\equiv\big\{u\in Y^s\,:\, \Pi_{N}(u)\in \tilde \Sigma_{N}^{i}\big\}.
$$
For $i\geq 1$, we set
\begin{equation*}
\Sigma^{i}=\big\{ u\in Y^s\,:\, \exists\, N_{k}\in \N,\, N_k \to +\infty,\\
\exists\, u_{N_{k}}\in {\Sigma}^{i}_{N_{k}}, S_{N_k}(u_{N_{k}})\to u \;\text{ in }Y^s \big\}.
\end{equation*}
As in the proof of Theorem~\ref{thm4}, we obtain the bound
\begin{equation}\label{kr5_bis}
\rho\big(\Sigma^{i}\big) \geq \rho(Y^s)-2^{-i}.
\end{equation}
Next, we set
\begin{equation}\label{def.sigma_bis}
\Sigma\equiv\bigcup_{i=1}^{\infty}\Sigma^{i}.
\end{equation}
and by \eqref{kr5_bis}, the set $\Sigma$ is of full $\rho$ measure.
We now state a proposition yielding the existence part of Proposition~\ref{sabota}.
\begin{prop}\label{prop.sigma_bis}
For every integer $i\geq 1$, every $f\in \Sigma^{i}$, the problem 
\eqref{C3} 
with initial condition $f$ has a unique solution in
$C([-\frac{\pi}{4},\frac{\pi}{4}];Y^s)$.
\end{prop}
The proof of Proposition~\ref{prop.sigma_bis} is very similar (simpler) to that of
Proposition~\ref{prop.sigma}, by invoking the counterpart of the approximation
statement of Lemma~\ref{lem.limite}.
This implies the existence part of Proposition~\ref{sabota}. Namely we proved the well-posedness 
for data in $\Sigma$ (defined by \eqref{def.sigma_bis}) and since $\Sigma$ is of full $\rho$ measure
it is of full $\mu$ measure too.

To prove the last statement of Proposition~\ref{sabota}, we write the obtained solution as
\begin{multline*}
u(t)=e^{-itH}
\big(u(0)-2i
\int_{0}^{\pi/4} 
e^{i\tau H}\big(\cos^{\frac{k-5}{2}}(2\tau)|u(\tau)|^{k-1}u(\tau)\big)d\tau\big)
\\
+
2i
\int_{t}^{\pi/4} e^{-i(t-\tau)H}\big(\cos^{\frac{k-5}{2}}(2\tau)|u(\tau)|^{k-1}u(\tau)\big)d\tau
\end{multline*}
and we apply estimate (\ref{scat}). A similar argument applies near $-\pi/4$.
This completes the proof of Proposition~\ref{sabota}.
\end{proof}
\appendix
\section{Typical properties on the support of the measure}\label{appendix}
In this section, we give some additional properties of the stochastic series 
\begin{equation*}
\phi(\om,x)=\sum_{n=0}^{\infty}{\frac{\sqrt 2}{\lambda_{n}}}g_{n}(\omega)h_n(x),
\end{equation*}
which have their own interest.
\subsection{Mean and pointwise properties}
\begin{prop}[$L^{p}$ regularisation]\label{prop.series}
Let $2\leq p<+\infty$ and denote by 
\begin{equation*}
\theta(p)=\left\{\begin{array}{ll} 
\frac12-\frac1p \quad &\text{if} \quad 2\leq p\leq 4, \\[6pt] 
\frac13(\frac12+\frac1p) \quad &\text{if} \quad
4\leq p<\infty.
\end{array} \right.
\end{equation*}
Then for all $s<\theta(p)$, there exist $C,c>0$ so that
\begin{equation*}\label{est.gdev}
{\bf{p}}\big(\om\in \Omega \;:\;\|\phi(\om,\cdot)\|_{\W^{s,p}(\R)}>\lambda\big)\leq C \e^{-c\lambda^{2}}.
\end{equation*}
In particular $\|\phi(\om,\cdot)\|_{\W^{s,p}(\R)}<+\infty$, ${\bf p}$ a.s.
\end{prop}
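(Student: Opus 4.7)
The plan is to reproduce the Khinchin plus Bienaym\'e-Tchebichev scheme used in Lemmas~\ref{lemm2} and~\ref{ld}, combined with sharp $L^p$ decay of the Hermite functions. By Bienaym\'e-Tchebichev the claimed Gaussian tail bound (and hence the almost sure finiteness of $\|\phi(\om,\cdot)\|_{\W^{s,p}}$) will follow once one establishes that for every $r\geq \max(p,2)$,
\begin{equation*}
\|\phi\|_{L^{r}(\Omega;\,\W^{s,p}(\R))}\leq C\sqrt{r},
\end{equation*}
the constant $C$ being independent of $r$; indeed, inserting this bound in ${\bf p}(\|\phi\|_{\W^{s,p}}>\lambda)\leq (C\sqrt{r}/\lambda)^{r}$ and optimizing by $r=\delta\lambda^{2}$ yields the desired $e^{-c\lambda^{2}}$ decay, exactly as in the proof of Lemma~\ref{lemm2}.

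To estimate the $L^r$ moment, I would write $H^{s/2}\phi(\omega,x)=\sum_{n\geq 0}\sqrt{2}\,\lambda_{n}^{s-1}g_{n}(\omega)h_{n}(x)$ and use Minkowski's inequality (valid since $r\geq p$) to exchange $L^{r}_{\omega}$ with $L^{p}_{x}$. At each fixed $x$ the series is a centered complex Gaussian, so Lemma~\ref{Gaussian} yields
\begin{equation*}
\|H^{s/2}\phi(\cdot,x)\|_{L^{r}_{\omega}}\leq C\sqrt{r}\,\Bigl(\sum_{n\geq 0}2\lambda_{n}^{2s-2}|h_{n}(x)|^{2}\Bigr)^{1/2}.
\end{equation*}
Since the summands are nonnegative, a further application of Minkowski in $L^{p/2}$ reduces the remaining deterministic quantity to
\begin{equation*}
\Bigl\|\sum_{n\geq 0}\lambda_{n}^{2s-2}|h_{n}|^{2}\Bigr\|_{L^{p/2}(\R)}\leq \sum_{n\geq 0}\lambda_{n}^{2s-2}\|h_{n}\|_{L^{p}(\R)}^{2},
\end{equation*}
so that the whole argument boils down to the convergence of this numerical series.

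The hard step will be to input the right decay rate for $\|h_{n}\|_{L^{p}}$. Lemma~\ref{disp} only records $\|h_n\|_{L^p}\leq C\lambda_n^{-1/6}$ for $p\geq 4$, which plugged in above would give only the too weak threshold $s<1/6$ uniformly in $p$. To reach the full range $s<\theta(p)$ of the statement, I would invoke the refined Koch--Tataru/Thangavelu Hermite estimates,
\begin{equation*}
\|h_{n}\|_{L^{p}(\R)}\lessim \lambda_{n}^{-\theta(p)},\qquad 2\leq p\leq \infty,
\end{equation*}
which are obtained by Riesz--Thorin interpolation from the three endpoints $\|h_n\|_{L^{2}}=1$, $\|h_n\|_{L^{4}}\lessim \lambda_n^{-1/4}$ (up to a logarithm, following from a direct bulk plus Airy computation on the Plancherel--Rotach asymptotics of $h_n$), and $\|h_n\|_{L^{\infty}}\lessim \lambda_n^{-1/6}$ (the $p=\infty$ case of Lemma~\ref{disp}); this yields $\theta(p)=1/2-1/p$ on $[2,4]$ and $\theta(p)=1/6+1/(3p)$ on $[4,\infty]$ exactly as in the statement. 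Once these are in hand, the series above becomes (up to harmless logarithms) $\sum_{n\geq 0}(2n+1)^{s-1-\theta(p)}$, which converges precisely when $s<\theta(p)$; this delivers the $\sqrt{r}$ moment bound and closes the argument.
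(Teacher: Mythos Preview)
Your proposal is correct and follows precisely the approach the paper intends: the paper's own proof is a one-line remark that the argument of Lemma~\ref{ld} goes through once the crude bound $\|h_n\|_{L^p}\lesssim\lambda_n^{-1/6}$ is replaced by the sharp decay $\|h_n\|_{L^p}\lesssim\lambda_n^{-\theta(p)}$ from \cite{YajimaZhang1} (or \cite{KochTataru}, \cite{Thomann5}), and you have written out exactly that computation. The only cosmetic remark is that you need not rederive the endpoint Hermite bounds by interpolation yourself, since the paper simply cites them as known.
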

\begin{proof}
The proof is essentially the same as the proof of Lemma~\ref{ld}, using the precise $L^{p}$ 
bounds on the Hermite functions $h_{n}$ (see \cite{YajimaZhang1} or \cite[Theorem 2.1]{Thomann5}). 
\end{proof}
\begin{coro}[Decay]
Let $\alpha <\frac16$. Then there exist $C,c>0$ so that for all $x\in \R$ 
\begin{equation*}
{\bf{p}}\big(\om\in \Omega \;:\;|\phi(\om,x)|>\frac{\lambda}{\<x\>^{\alpha}}\,\big)\leq C \e^{-c\lambda^{2}}.
\end{equation*}
In particular, for almost all $\om \in \Omega$,
$$ \phi(\om,x)\longrightarrow 0 \quad\text{when}\quad x\longrightarrow \pm \infty.$$ 
\end{coro}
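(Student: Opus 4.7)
The plan rests on the pointwise (in $x$) observation that $\phi(\om,x)$ is a centered complex Gaussian with variance
$$
\s^2(x)=\E|\phi(\om,x)|^2=\sum_{n\geq 0}\frac{2|h_n(x)|^2}{\lambda_n^2},
$$
so that the probability bound reduces to controlling $\s(x)$ in terms of $\langle x\rangle$ and then applying Khinchin (Lemma~\ref{Gaussian}) plus Bienaym\'e--Tchebichev, exactly as in the proof of Lemma~\ref{ld}. The almost sure decay will be extracted separately from the regularity already established in Proposition~\ref{prop.series}.

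The key analytic step is the bound $\s^2(x)\leq C\langle x\rangle^{-1}$, which is in fact much stronger than the $\langle x\rangle^{-2\alpha}$ ($\alpha<1/6$) actually required. I would establish it by writing $\lambda_n^{-2}=\int_0^\infty e^{-\lambda_n^2 t}\,dt$ and recognizing $\sum_n e^{-\lambda_n^2 t}|h_n(x)|^2$ as the on-diagonal heat kernel of $H$, which by Mehler's formula (obtained from~\eqref{bg1} by analytic continuation $t\rightsquigarrow -it$) equals $(2\pi\sinh(2t))^{-1/2}e^{-x^2\tanh t}$. The resulting integral
$$
\s^2(x)=2\int_0^\infty \frac{e^{-x^2\tanh t}}{\sqrt{2\pi\sinh(2t)}}\,dt
$$
decomposes into a contribution from $t\geq 1$ bounded by $Ce^{-cx^2}$ and a contribution from $t\in(0,1)$ which, after the rescaling $u=x^2 t$ together with $\tanh t\sim t$ and $\sinh(2t)\sim 2t$, is bounded by $C|x|^{-1}$ for $|x|\geq 1$. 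With $\s^2(x)\leq C\langle x\rangle^{-1}$ in hand, Lemma~\ref{Gaussian} applied to $c_n=\sqrt 2\,h_n(x)/\lambda_n$ gives $\|\langle x\rangle^\alpha\phi(\om,x)\|_{L^r(\Omega)}\leq C\sqrt r$ uniformly in $x$, and the standard optimization $r=c\lambda^2$ produces the claimed Gaussian tail.

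For the almost sure decay, I would fix $\alpha<s<1/6$ and choose $p$ large enough that $sp>1$. By Proposition~\ref{prop.series} we have $\phi(\om,\cdot)\in\mathcal{W}^{s,p}(\R)$ almost surely, and~\eqref{weyl-hormander} then yields simultaneously $\langle x\rangle^s\phi(\om,\cdot)\in L^p(\R)$ and $\langle D_x\rangle^s\phi(\om,\cdot)\in L^p(\R)$. The latter, via the usual Sobolev embedding $W^{s,p}(\R)\hookrightarrow C^{s-1/p}(\R)$, shows that $\phi(\om,\cdot)$ is uniformly continuous on $\R$. A uniformly continuous function belonging to $L^p(\R,\langle x\rangle^{sp}dx)$ must tend to $0$ at infinity, for otherwise one could produce infinitely many disjoint excursions of fixed width on which $|\phi|\geq\eps$, with centers going to infinity, whose total contribution to the weighted integral would already be infinite.

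The main obstacle is the pointwise variance estimate. Without the Mehler trick one would be forced to split the sum $\sum_n|h_n(x)|^2/\lambda_n^2$ according to the three Plancherel--Rotach regimes $|x|\ll\lambda_n$, $|x|\sim\lambda_n$, $|x|\gg\lambda_n$ and handle each separately; Mehler's formula bypasses this by converting the whole sum into a one-dimensional Laplace-type integral.
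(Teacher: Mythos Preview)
Your proof is correct but takes a genuinely different route from the paper. For the probability estimate, the paper does not work pointwise at all: it chooses $s>0$ with $s+\alpha<\tfrac16$ and $p\geq 4$ with $sp>1$, uses the Sobolev embedding $\|\langle x\rangle^\alpha\phi\|_{L^\infty}\leq C\|\langle x\rangle^\alpha\phi\|_{\W^{s,p}}$, then a weighted estimate $\|\langle x\rangle^\alpha u\|_{\W^{s,p}}\leq C\|u\|_{\W^{s+\alpha,p}}$ (from Yajima--Zhang), and finally Proposition~\ref{prop.series}. This yields the stronger statement that $\sup_{x}\langle x\rangle^\alpha|\phi(\om,x)|$ has Gaussian tails, from which the almost sure decay is immediate. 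Your approach instead computes the pointwise variance via the Mehler heat kernel and obtains $\sigma^2(x)\leq C\langle x\rangle^{-1}$; this is elegant, self-contained, and in fact shows the fixed-$x$ tail bound holds for every $\alpha<\tfrac12$, not just $\alpha<\tfrac16$. The price is that your Part~1 gives only the pointwise (not supremum) bound, which is why you need the separate Part~2 argument via uniform continuity plus weighted $L^p$ integrability---an argument the paper avoids entirely. Both approaches are valid; yours trades the external Yajima--Zhang lemma for Mehler's formula and a slightly longer endgame.
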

\begin{proof}
Let $\alpha <\frac16$. Then choose $s>0$ so that $s+\alpha<\frac16$ and $p\geq 4$ so that $s>\frac1p$. Then by Sobolev, there exists $C>0$ so that for all $\om \in \Omega$
\begin{equation*}
\|\<x\>^{\alpha}\phi(\om,\cdot)\|_{L^{\infty}(\R)}\leq C \|\<x\>^{\alpha}\phi(\om,\cdot)\|_{\W^{s,p}(\R)}.
\end{equation*}
Now by \cite[Lemma 2.4]{YajimaZhang2}, 
$$\|\<x\>^{\alpha}\phi(\om,\cdot)\|_{\W^{s,p}(\R)}\leq C\|\phi(\om,\cdot)\|_{\W^{s+\alpha,p}(\R)},$$
thus
\begin{equation*}
\big\{ \om\in \Omega \;:\;\<x\>^{\alpha}|\phi(\om,x)|>\lambda \big\} \subset \big\{ \om\in \Omega \;:\;\|\phi(\om,\cdot)\|_{\W^{s+\alpha,p}(\R)}>\frac{\lambda}C \big\} .
\end{equation*}
and we can conclude with the Proposition \ref{prop.series}, as $s+\alpha<\theta(p)$.
\end{proof}

\begin{prop}[H\"olderian regularity]
Let $\alpha <\frac16$. There exist $C,c>0$ so that for all $x,y\in \R$ 
\begin{equation*}
{\bf{p}}\big(\om\in \Omega \;:\;|\phi(\om,x)-\phi(\om,y)|>\lambda |x-y|^{\alpha}\big)\leq C \e^{-c\lambda^{2}}.
\end{equation*}
In particular, for almost all $\om \in \Omega$, the function $x\longmapsto \phi(\om,x)$ is $\alpha$-H\"olderian on $\R$.
\end{prop}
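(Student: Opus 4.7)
The plan is to mimic exactly the structure used for the decay and $L^p$ regularization statements, replacing the Sobolev embedding $W^{s,p}\hookrightarrow L^\infty$ by its Hölderian refinement $W^{s,p}(\R)\hookrightarrow \mathcal{C}^{\,s-1/p}(\R)$ (valid in one dimension whenever $1/p<s<1+1/p$).

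First, given $\alpha<1/6$, I would choose the exponents in the following order. Note that as $p\to\infty$, $\theta(p)=\tfrac{1}{3}(\tfrac12+\tfrac1p)\to\tfrac16$, so $\theta(p)-\tfrac1p\to\tfrac16$. Hence, since $\alpha<1/6$, one can pick $p\geq 4$ large enough and then $s>0$ such that
\begin{equation*}
\alpha+\tfrac{1}{p}<s<\theta(p).
\end{equation*}
With such $(s,p)$, the standard Sobolev embedding in dimension one gives a constant $C_{0}>0$ such that for every sufficiently smooth $u$,
\begin{equation*}
|u(x)-u(y)|\leq C_{0}\,\|u\|_{W^{s,p}(\R)}\,|x-y|^{\alpha},\qquad x,y\in\R.
\end{equation*}

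Next, I would transfer this bound to the harmonic-oscillator Sobolev space using the equivalence~\eqref{weyl-hormander}, which yields
\begin{equation*}
\|u\|_{W^{s,p}(\R)}\leq \|\langle D_x\rangle^s u\|_{L^p}+\|\langle x\rangle^s u\|_{L^p}\leq C\,\|u\|_{\W^{s,p}(\R)}.
\end{equation*}
Combining the last two inequalities, for every $x,y\in\R$ and every $\om\in\Omega$,
\begin{equation*}
|\phi(\om,x)-\phi(\om,y)|\leq C_{1}\,\|\phi(\om,\cdot)\|_{\W^{s,p}(\R)}\,|x-y|^{\alpha}.
\end{equation*}
Thus the event $\{|\phi(\om,x)-\phi(\om,y)|>\lambda |x-y|^{\alpha}\}$ is contained in $\{\|\phi(\om,\cdot)\|_{\W^{s,p}(\R)}>\lambda/C_{1}\}$, and Proposition~\ref{prop.series} (applicable because $s<\theta(p)$) immediately gives the desired Gaussian deviation bound
\begin{equation*}
{\bf p}\big(\om\,:\,|\phi(\om,x)-\phi(\om,y)|>\lambda|x-y|^{\alpha}\big)\leq C\,\e^{-c\lambda^{2}},
\end{equation*}
uniformly in $x,y$.

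Finally, the almost sure $\alpha$-Hölder continuity of $x\mapsto\phi(\om,x)$ is a direct consequence of the pointwise inequality $|\phi(\om,x)-\phi(\om,y)|\leq C_{1}\|\phi(\om,\cdot)\|_{\W^{s,p}(\R)}|x-y|^{\alpha}$ together with the almost sure finiteness of $\|\phi(\om,\cdot)\|_{\W^{s,p}(\R)}$ supplied by Proposition~\ref{prop.series}. I do not foresee any genuine obstacle here: the only delicate point is the arithmetic check that $\alpha<1/6$ is exactly the threshold allowing one to simultaneously satisfy $s<\theta(p)$ and $s-1/p>\alpha$, which is ensured by the behaviour of $\theta(p)$ at infinity.
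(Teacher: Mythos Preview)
Your proof is correct and reaches the same threshold $\alpha<\frac16$, but the paper takes a more direct route that avoids both the Sobolev embedding and the equivalence~\eqref{weyl-hormander}. The paper works pointwise on each Hermite function: from Lemma~\ref{disp} one has $|h_n(x)-h_n(y)|\leq C\lambda_n^{-1/6}$, and from the derivative relation~\eqref{derive_hermite} together with Lemma~\ref{disp} one has $|h_n(x)-h_n(y)|\leq C\lambda_n^{5/6}|x-y|$; interpolation then gives $|h_n(x)-h_n(y)|\leq C\lambda_n^{\alpha-1/6}|x-y|^\alpha$. Feeding this into the Khinchin inequality (Lemma~\ref{Gaussian}) yields $\|\phi(\omega,x)-\phi(\omega,y)\|_{L^r(\Omega)}\leq C\sqrt{r}\,|x-y|^\alpha$, and Tchebychev concludes.

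Your approach is arguably more economical since it simply replays the strategy of the Decay corollary and reuses Proposition~\ref{prop.series} as a black box; it also gives the almost-sure global H\"older continuity in one stroke via the embedding. The paper's approach is more self-contained---it needs neither the harmonic-oscillator/classical Sobolev comparison nor the fractional embedding theorem---and makes the mechanism transparent: the $\frac16$ threshold is seen directly as the balance between the $\lambda_n^{-1/6}$ decay of $\|h_n\|_{L^\infty}$ and the $\lambda_n$ growth of $\|h_n'\|_{L^\infty}$.
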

\begin{proof}
By Lemma \ref{disp}, for all $x,y\in \R$ we have 
\begin{equation*}
|h_{n}(x)-h_{n}(y)|\leq C\lambda_{n}^{-\frac16}.
\end{equation*}
By Lemma \ref{disp} again, we also have the bound (see \eqref{derive_hermite}) 
\begin{equation*}
|h_{n}(x)-h_{n}(y)|\leq \|h_{n}\|_{\W^{1,\infty}(\R)}|x-y|
\leq C\lambda_{n}^{\frac56}|x-y|,
\end{equation*}
and we can deduce by interpolation that for all $0\leq \alpha \leq 1$, 
\begin{equation*}
|h_{n}(x)-h_{n}(y)|\leq C\lambda_{n}^{\alpha-\frac16}|x-y|^{\alpha}.
\end{equation*}
Now, by Lemma \ref{Gaussian}, for all $r\geq 2 $
\begin{eqnarray*}
\|\phi(\om,x)-\phi(\om,y)\|_{L^{r}(\Omega)}&\leq &C\sqrt{r} \big(\sum_{n=0}^{\infty} \frac1{\lambda^{2}_{n}}|h_{n}(x)-h_{n}(y)|^{2} \big)^{\frac12}\\
&\leq &C\sqrt{r} |x-y|^{\alpha}\big(\sum_{n=0}^{\infty} \frac1{\lambda^{2(1-\alpha+\frac16)}_{n}} \big)^{\frac12}\\
&\leq &C\sqrt{r} |x-y|^{\alpha},
\end{eqnarray*}
for all $0\leq \alpha<\frac16$. We conclude with the Tchebychev inequality.
\end{proof}
\noindent The Proposition \ref{prop.series} shows that the random variables 
$\big(g_{n}(\om)\big)_{n\geq 0}$ yield no gain of derivatives in $\H^{s}$ spaces, 
however we can prove a local gain of regularity.
\begin{prop}[Local smoothing]
Let $\nu>0$ and define $\Psi(x)=\<x\>^{-\frac12-\nu}$. Then for all $s<\frac12$, there exist $C,c>0$ so that
\begin{equation*}
{\bf{p}}\big(\om\in \Omega \;:\;\|\Psi\, \phi(\om,\cdot)\|_{\H^{s}(\R)}>\lambda\big)\leq C \e^{-c\lambda^{2}}.
\end{equation*}
In particular $\|\Psi\, \phi(\om,\cdot)\|_{\H^{s}(\R)}<+\infty$, ${\bf p}$ a.s.
\end{prop}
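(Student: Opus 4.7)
The plan is to follow the scheme of Lemma~\ref{lem.smooth}. Expanding
\begin{equation*}
H^{s/2}(\Psi \phi)(\omega, x) = \sum_{n \geq 0} \frac{\sqrt{2}}{\lambda_n} g_n(\omega)\, H^{s/2}(\Psi h_n)(x),
\end{equation*}
then applying Lemma~\ref{Gaussian} pointwise in $x$ followed by Minkowski's inequality (exchanging $L^r(\Omega)$ with $L^2_x$, $r\geq 2$), the problem reduces to establishing the summability
\begin{equation*}
\sum_{n \geq 0} \frac{1}{\lambda_n^2} \|\Psi h_n\|_{\mathcal H^s}^2 < +\infty, \qquad s<1/2,
\end{equation*}
after which a choice of $r \sim \lambda^2$ in Bienaym\'e--Tchebychev produces the Gaussian tail exactly as in Lemmas~\ref{lemm2} and~\ref{lem.smooth}.

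To bound $\|\Psi h_n\|_{\mathcal H^s}$, I would interpolate between the endpoints $s=0$ and $s=1$. For $s=0$, the local smoothing estimate appearing in the proof of Lemma~\ref{gl} yields, since $1/2+\nu>1/2$,
\begin{equation*}
\|\Psi h_n\|_{L^2} = \|\langle x \rangle^{-1/2 - \nu} h_n\|_{L^2} \leq C \lambda_n^{-1/2}.
\end{equation*}
For $s=1$, using the identity $\|u\|_{\mathcal H^1}^2 = \|\partial_x u\|_{L^2}^2 + \|x u\|_{L^2}^2$, the Hermite recurrence~\eqref{derive_hermite}, and the pointwise bounds $|\Psi'(x)| \leq C\langle x \rangle^{-3/2-\nu}$, $|x \Psi(x)| \leq C\langle x \rangle^{1/2-\nu}$, one derives $\|\Psi h_n\|_{\mathcal H^1} \leq C \lambda_n^{1/2}$. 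The only delicate term is $\|x \Psi h_n\|_{L^2} \leq \|\langle x \rangle^{1/2-\nu} h_n\|_{L^2}$, which H\"older on the integral controls by
\begin{equation*}
\|\langle x \rangle^{1/2-\nu} h_n\|_{L^2} \leq \|\langle x \rangle h_n\|_{L^2}^{1/2-\nu} \|h_n\|_{L^2}^{1/2+\nu} \leq C \lambda_n^{1/2-\nu},
\end{equation*}
the bound $\|\langle x \rangle h_n\|_{L^2} \leq C \lambda_n$ following from $x h_n = \sqrt{n/2}\,h_{n-1} + \sqrt{(n+1)/2}\,h_{n+1}$.

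Finally, H\"older on the spectral decomposition (Hermite basis) gives
\begin{equation*}
\|\Psi h_n\|_{\mathcal H^s} \leq \|\Psi h_n\|_{L^2}^{1-s} \|\Psi h_n\|_{\mathcal H^1}^{s} \leq C \lambda_n^{s - 1/2}, \qquad s \in [0,1],
\end{equation*}
so the series reduces to $\sum_n \lambda_n^{2s-3} \sim \sum_n n^{s - 3/2}$, which converges precisely when $s < 1/2$. Combining with Lemma~\ref{Gaussian} yields $\|\Psi \phi\|_{L^r(\Omega; \mathcal H^s)} \leq C\sqrt{r}$, and the large deviation bound follows.

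The main obstacle is the $\mathcal H^1$ estimate, since the naive bound $\|x \Psi h_n\|_{L^2} \leq \|x h_n\|_{L^2} \sim \lambda_n$ would fall just short of summability; the choice of weight slightly stronger than $\langle x \rangle^{-1/2}$ is precisely what saves the critical half-power of $\lambda_n$ needed to close the argument, which mirrors the reason the threshold in the statement is $s<1/2$ and not beyond.
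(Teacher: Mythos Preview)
Your argument is correct and follows the same overall scheme as the paper: reduce via Lemma~\ref{Gaussian} and Minkowski to the summability of $\sum_n \lambda_n^{-2}\|\Psi h_n\|_{\mathcal H^s}^2$, then conclude by Bienaym\'e--Tchebychev. The paper's proof is extremely terse---it only records the bound $\|\Psi h_n\|_{L^2}\leq C\lambda_n^{-1/2}$ and defers to ``the same argument as in the proof of Lemma~\ref{ld}''---whereas you make explicit the passage from the $L^2$ bound to the $\mathcal H^s$ bound on $\Psi h_n$ by interpolating against the $\mathcal H^1$ endpoint; this is a genuine additional step, since (unlike in Lemma~\ref{ld}) $\Psi$ does not commute with $H^{s/2}$, and your treatment closes that gap cleanly.

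One minor remark on your closing comment: the ``naive'' bound $\|x\Psi h_n\|_{L^2}\leq\|xh_n\|_{L^2}\sim\lambda_n$ would give $\|\Psi h_n\|_{\mathcal H^1}\leq C\lambda_n$ and hence, after interpolation, only $s<1/3$ rather than ``just short'' of $s<1/2$; it is really the combination of the sharp $L^2$ endpoint $\lambda_n^{-1/2}$ (which already requires $\nu>0$) with the $\mathcal H^1$ endpoint $\lambda_n^{1/2}$ that produces the full range.
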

\begin{proof}
By \cite[Corollary~1.2]{Thomann6} the following bound holds
$$\|\Psi\,h_{n}\|_{L^{2}(\R)}\leq \lambda_{n}^{-\frac12}.$$
Then we can perform the same argument as in the proof of Lemma \ref{ld}.
\end{proof}
\subsection{Spatial decorrelation} Define the function $E$ for $(x,y,\a)\in \R\times \R\times [0,1[$ by 
\begin{equation}\label{funct_E}
E(x,y,\a)=\sum_{n\geq 0}\a^{n}\,h_{n}(x)\,h_{n}(y).
\end{equation}
Then we have an explicit formula for $E$. 
\begin{lemm}\label{wiki}
For all $(x,y,\a)\in \R\times \R\times [0,1[$
\begin{equation}\label{for}
E(x,y,\a)=\frac1{\sqrt{\pi(1-\a^{2})}}\exp\big(-\frac{1-\a}{1+\a}\,\frac{(x+y)^{2}}{4}-\frac{1+\a}{1-\a}\,\frac{(x-y)^{2}}{4}\big).
\end{equation}
\end{lemm}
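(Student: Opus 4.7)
The plan is to reduce \eqref{for} to the classical Mehler identity for the physicists' Hermite polynomials and then finish with a short algebraic simplification. First I would use \eqref{formula} to rewrite $h_n(x) = c_n e^{-x^2/2} H_n(x)$, where $H_n(x) = (-1)^n e^{x^2} \frac{d^n}{dx^n}(e^{-x^2})$ is the physicist's Hermite polynomial and $c_n^2 = (n!\, 2^n \sqrt\pi)^{-1}$. Since the series \eqref{funct_E} converges absolutely and geometrically for every $\alpha\in[0,1)$ (using for instance the crude bound $\|h_n\|_{L^\infty}\lesssim n^{1/4}$ valid for Hermite functions), one may rearrange terms and obtain
\begin{equation*}
E(x,y,\alpha) = \frac{e^{-(x^2+y^2)/2}}{\sqrt\pi}\sum_{n=0}^{\infty}\frac{H_n(x) H_n(y)}{n!}\Bigl(\frac{\alpha}{2}\Bigr)^n.
\end{equation*}

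Next, I would invoke the classical Mehler identity for the Hermite polynomials: for every $|t|<1$ and every $x,y\in\R$,
\begin{equation*}
\sum_{n=0}^{\infty}\frac{H_n(x) H_n(y)}{n!}\Bigl(\frac{t}{2}\Bigr)^n = \frac{1}{\sqrt{1-t^2}}\exp\Bigl(\frac{2xyt-(x^2+y^2)t^2}{1-t^2}\Bigr).
\end{equation*}
A self-contained derivation can be obtained by multiplying the generating function $e^{2xs-s^2} = \sum_n H_n(x)s^n/n!$ by the analogous expression in $y$ and integrating out an auxiliary Gaussian variable; an alternative route is to verify that both sides satisfy the same first order linear ODE in $t$ together with the same value at $t=0$. (At a slightly more conceptual level, one could instead obtain the same formula from the Mehler representation \eqref{bg1} of $e^{-itH}$ by analytic continuation to imaginary time, since $\sum_n e^{-(2n+1)t} h_n(x) h_n(y)$ is the heat kernel of $H$.)

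Applying the identity with $t=\alpha$ and combining with the prefactor, I obtain
\begin{equation*}
E(x,y,\alpha) = \frac{1}{\sqrt{\pi(1-\alpha^2)}}\exp\Bigl(-\frac{(x^2+y^2)(1+\alpha^2) - 4\alpha xy}{2(1-\alpha^2)}\Bigr).
\end{equation*}
To conclude, it only remains to verify the elementary algebraic identity
\begin{equation*}
-\frac{(x^2+y^2)(1+\alpha^2)-4\alpha xy}{2(1-\alpha^2)} = -\frac{1-\alpha}{1+\alpha}\frac{(x+y)^2}{4} - \frac{1+\alpha}{1-\alpha}\frac{(x-y)^2}{4},
\end{equation*}
which follows by reducing the right hand side to the common denominator $4(1-\alpha^2)$ and expanding $(1-\alpha)^2(x+y)^2 + (1+\alpha)^2(x-y)^2 = 2(1+\alpha^2)(x^2+y^2) - 8\alpha xy$. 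There is no substantial obstacle in this proof: the only non-routine ingredient is the Mehler identity for the Hermite polynomials, which is classical and can be reproduced in a few lines; the rest is bookkeeping on the normalization constants and a short algebraic identity.
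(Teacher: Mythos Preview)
Your proof is correct. The only substantive difference from the paper's argument is that you invoke the classical Mehler identity for the physicists' Hermite polynomials as a known result (with two brief sketches of its proof), whereas the paper gives a self-contained derivation: it writes $\frac{d^n}{dx^n}(e^{-x^2})$ as the Fourier integral $\frac{1}{2\sqrt\pi}\int_{\R}(i\xi)^n e^{ix\xi-\xi^2/4}\,d\xi$, substitutes into the series for $E$, sums the resulting exponential series $\sum_n \frac{1}{n!}(-\alpha\xi\eta/2)^n$, and evaluates the double Gaussian integral after the rotation $(\xi',\eta')=\frac{1}{\sqrt2}(\xi+\eta,\xi-\eta)$. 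Your route is slightly more economical if one is willing to quote the classical identity; the paper's route is fully self-contained and, incidentally, is essentially the Fourier/integral proof of Mehler's identity itself rather than the generating-function or ODE arguments you sketch.
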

\begin{rema} Notice that by taking  $\alpha= e^{2it}$, one can see that Lemma~\ref{wiki} is equivalent to Mehler formula~\eqref{bg1}, which in turn implies that the function defined by~\eqref{C2} satisfies~\eqref{C3}. Actually, one could probably extend Lemma~\ref{wiki} to more general potential (with quadratic growth) by precisely writing down a parametrix for $e^{it( - \partial_x ^2 + V(x))}$, or for the heat kernel $e^{-t( - \partial_x ^2 + V(x))}$.
\end{rema}
\begin{proof}
First we recall that the Fourier transform of the Gaussian reads
\begin{equation}\label{int.fourier0}
\e^{-\sigma^{2}x^{2}}=\frac1{2\sigma \sqrt{\pi}}\int_{\R}\e^{ix\,\xi -\frac{\xi^{2}}{4\sigma^{2}}}\text{d}\xi,
\end{equation}
thus, for all $n\geq 1$, 
\begin{equation}\label{int.fourier}
\frac{\text{d}^{n}}{\text{d}x^{n}}\big(\,\e^{-x^{2}}\,\big)=\frac1{2\sqrt{\pi}}\int_{\R}(i\,\xi)^{n}\,\e^{ix\,\xi -\xi^{2}/4}\text{d}\xi.
\end{equation}
With \eqref{formula} and \eqref{int.fourier}, we deduce that 
\begin{equation*}
\begin{aligned}
E(x,y,\a)&=
\frac1{4\pi^{3/2}} \e^{(x^{2}+y^{2})/2} \sum_{n\geq 0}\frac{\a^{n}}{2^{n}\,n\,!} 
\int_{\R}(i\,\xi)^{n}\,\e^{ix\xi -\xi^{2}/4}\text{d}\xi
\int_{\R}(i\,\eta)^{n}\,\e^{iy\eta -\eta^{2}/4}\text{d}\eta
\\
&=\frac1{4\pi^{3/2}} \e^{(x^{2}+y^{2})/2}\int_{\R^{2}} \sum_{n\geq 0}\frac1{n\,!} \big(-\frac{\a\,\xi\,\eta}{2}\big)^{n} \e^{i(x\xi+y\eta) -\xi^{2}/4 -\eta^{2}/4}\text{d}\xi\,\text{d}\eta
\\
&=\frac1{4\pi^{3/2}} \e^{(x^{2}+y^{2})/2}\int_{\R^{2}} \e^{-\a\,\xi\,\eta/2+ix\,(\xi+\eta) -\xi^{2}/4 -\eta^{2}/4}\text{d}\xi\,\text{d}\eta.
\end{aligned}
\end{equation*}
To compute the last integral, we make the change of variables $(\xi',\eta')=\frac1{\sqrt{2}}(\xi+\eta,\xi-\eta)$ and use \eqref{int.fourier0}. This completes the proof.
\end{proof}

\begin{prop}[Spatial decorrelation]\label{blabla}
There exists $C>0$ so that for all $x,y\in \R$, 
\begin{equation}\label{eq.decorr}
\big|\E\big[ \, \phi(x,\om)\,\ov{\phi(y,\om)} \,\big]\big|\leq C \e^{-\frac{(x-y)^{2}}4}.
\end{equation}
\end{prop}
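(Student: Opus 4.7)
The plan is to reduce the computation of the covariance to an integral of the Mehler-like kernel $E(x,y,\alpha)$ of Lemma~\ref{wiki}, and then extract the Gaussian decay directly from the explicit formula~\eqref{for}. Since the $g_n$ are independent and $L^2$-normalized complex Gaussians, $\mathbb{E}[g_n \overline{g_m}] = \delta_{nm}$, so the first step is to expand
\begin{equation*}
\mathbb{E}\big[\phi(x,\omega)\overline{\phi(y,\omega)}\big] = \sum_{n=0}^{\infty} \frac{2}{\lambda_n^2} h_n(x) h_n(y) = \sum_{n=0}^{\infty}\frac{2}{2n+1} h_n(x) h_n(y).
\end{equation*}

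The key observation is the integral identity $\frac{2}{2n+1} = 2\int_0^1 \alpha^{2n}\,d\alpha$, which permits me to sum the series under the integral. After exchanging sum and integral (justified for $x\neq y$ by the fast decay supplied by Lemma~\ref{wiki}, or for general $x,y$ by monotone/dominated convergence on the partial sums), I obtain
\begin{equation*}
\mathbb{E}\big[\phi(x,\omega)\overline{\phi(y,\omega)}\big] = 2\int_0^1 E(x,y,\alpha^2)\,d\alpha.
\end{equation*}

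Plugging in the explicit formula~\eqref{for} of Lemma~\ref{wiki}, the integrand becomes
\begin{equation*}
\frac{1}{\sqrt{\pi(1-\alpha^4)}}\exp\Bigl(-\frac{1-\alpha^2}{1+\alpha^2}\frac{(x+y)^2}{4} - \frac{1+\alpha^2}{1-\alpha^2}\frac{(x-y)^2}{4}\Bigr).
\end{equation*}
The first exponential factor is $\leq 1$, while $\frac{1+\alpha^2}{1-\alpha^2}\geq 1$ for $\alpha\in[0,1)$, so the second exponential is bounded by $\exp(-(x-y)^2/4)$. Pulling this Gaussian out of the integral, it only remains to check that the prefactor is integrable:
\begin{equation*}
\int_0^1 \frac{d\alpha}{\sqrt{\pi(1-\alpha^4)}} = \int_0^1 \frac{d\alpha}{\sqrt{\pi(1-\alpha^2)(1+\alpha^2)}} \leq \int_0^1 \frac{d\alpha}{\sqrt{\pi(1-\alpha^2)}} < \infty,
\end{equation*}
which yields the claim with a universal constant $C$.

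There is essentially no obstacle here: the only delicate point is justifying the exchange of sum and integral, which is harmless because all terms are of a fixed sign once one pairs $h_n(x)h_n(y)$ together with its positive weight (alternatively, one can truncate the sum at $N$, apply Fubini to each partial sum, and pass to the limit using the pointwise convergence and the uniform bound on $E(x,y,\alpha^2)$ for $\alpha$ away from $1$, combined with the integrability estimate above). The rest of the argument is an elementary consequence of the explicit Mehler-type formula established in Lemma~\ref{wiki}.
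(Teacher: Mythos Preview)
Your proof is correct and is essentially the same as the paper's: the paper introduces $F(x,y,\alpha)=2\sum_{n}\frac{\alpha^{2n+1}}{2n+1}h_n(x)h_n(y)$, differentiates to get $\partial_\alpha F=2E(x,y,\alpha^2)$, and integrates back, which is exactly your identity $\frac{2}{2n+1}=2\int_0^1\alpha^{2n}\,d\alpha$ in disguise. One small caveat: your remark that the terms have ``a fixed sign'' is not right since $h_n(x)h_n(y)$ changes sign, but the exchange of sum and integral is cleanly justified by Fubini--Tonelli on absolute values, using $\|h_n\|_{L^\infty}\lesssim\lambda_n^{-1/6}$ (Lemma~\ref{disp}) to get $\sum_n\frac{2}{2n+1}|h_n(x)h_n(y)|<\infty$.
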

\begin{proof}
Consider the function $F$ defined by 
\begin{equation*}
F(x,y,\a)=2\sum_{n\geq 0}\frac{\a^{2n+1}}{\lambda^{2}_{n}}\,h_{n}(x)\,h_{n}(y)=2\sum_{n\geq 0}\frac{\a^{2n+1}}{2n+1}\,h_{n}(x)\,h_{n}(y),
\end{equation*}
for $(x,y,\a)\in \R\times \R\times [0,1]$. Thanks to the bound \eqref{disp} we have 
\begin{equation*}
\big|\frac{\a^{2n+1}}{\lambda^{2}_{n}}\,h_{n}(x)\,h_{n}(y)\big|\leq C \frac{1}{\<n\>}\|h_{n}\|^{2}_{L^{\infty}(\R)}\leq C \frac{1}{\<n\>^{1+\frac16}},
\end{equation*}
hence $F\in \mathcal{C}\big(\R\times \R\times [0,1];\R\big)$. Therefore, 
\begin{equation}\label{eq.convv}
F(x,y,\a)\longrightarrow \sum_{n\geq 0}\frac{2}{\lambda^{2}_{n}}\,h_{n}(x)\,h_{n}(y)=\E\big[ \, \phi(x,\om)\,\ov{\phi(y,\om)} \,\big],
\end{equation}
when $\a \longrightarrow 1$.\\
Now observe that $F$ is smooth in $\a \in [0,1[$. Thus (as $F(x,y,0)=0$)
\begin{equation}\label{eq.intt}
F(x,y,\a)=\int_{0}^{\a}\partial_{\a}F(x,y,\beta)\,\text{d}\beta.
\end{equation}
By \eqref{for} we have 
\begin{eqnarray*}
\partial_{\a}F(x,y,\beta)&=&2 \sum_{n\geq 0}\beta^{2n}\,h_{n}(x)\,h_{n}(y)\nonumber\\
&=& \frac2{\sqrt{\pi(1-\beta^{4})}}\exp\big(-\frac{1-\beta^{2}}{1+\beta^{2}}\,\frac{(x+y)^{2}}{4}-\frac{1+\beta^{2}}{1-\beta^{2}}\,\frac{(x-y)^{2}}{4}\big).
\end{eqnarray*}
Hence there exists $C>0$ so that for all $x,y \in \R$ and $\beta \in [0,1[$ 
\begin{equation*}
\big|\partial_{\a}F(x,y,\beta)\big|\leq \frac{C}{\sqrt{1-\beta}} \e^{-\frac{(x-y)^{2}}4},
\end{equation*}
and this, together with \eqref{eq.convv} and \eqref{eq.intt} yields the estimate \eqref{eq.decorr}.
\end{proof}

\subsection{Bilinear estimates} In this section we give a proof of \eqref{eq.carre}. Observe that \eqref{eq.carre}, applied with $t=0$ implies that $\phi^2(\om,x)$ is a.s. in $\H^\theta$ for every $\theta<1/2$
which is a remarkable smoothing property satisfied by the random series  $\phi(\om,x)$. The key point in the proof of  \eqref{eq.carre} is the following bilinear estimate 
for Hermite functions.
\begin{lemm}\label{patrick}
There exists $C>0$ so that for all  $0\leq \theta\leq 1$ and $n,m\in \N$
\begin{equation}\label{eq.bilin}
\|h_{n}\,h_{m}\|_{\H^{\theta}(\R)}\leq C\max{(n,m)}^{-\frac14+\frac{\theta}2}\big(\log {\big(\min{(n,m)}+1\big)}\big)^{\frac12}.
\end{equation}
\end{lemm}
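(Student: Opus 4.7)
I would prove~\eqref{eq.bilin} by reducing to the bilinear $L^2$ estimate
\begin{equation}\label{bilin-L2-plan}
\|h_nh_m\|_{L^2(\R)}^2 \leq \frac{C\log(\min(n,m)+1)}{\max(\lambda_n,\lambda_m)},
\end{equation}
and interpolating with an $\H^1$ endpoint. For the $\H^1$ bound, compute $\langle H(h_nh_m),h_nh_m\rangle$ using the eigenvalue equation $Hh_n = \lambda_n^2 h_n$ and two integrations by parts to obtain the identity
$$\|h_nh_m\|_{\H^1}^2 = \tfrac{\lambda_n^2+\lambda_m^2}{2}\|h_nh_m\|_{L^2}^2 + \tfrac12\int_{\R}(h_n^2 h_m'^2 + h_n'^2 h_m^2)\,dx.$$
Via the derivative recurrence~\eqref{derive_hermite}, each $h_m'^2$ (resp.\ $h_n'^2$) expands into a combination of $h_{m\pm 1}^2$ (resp.\ $h_{n\pm 1}^2$) weighted by factors of order $\max(n,m)$; inserting~\eqref{bilin-L2-plan} for the resulting bilinear $L^2$ norms yields $\|h_nh_m\|_{\H^1}^2 \leq C\max(\lambda_n,\lambda_m)\log(\min(n,m)+1)$. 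Interpolating $\H^\theta$ between $\H^0=L^2$ and $\H^1$ produces exactly the bound~\eqref{eq.bilin}.

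\textbf{Proof of~\eqref{bilin-L2-plan}.} Assume $n\leq m$. Split $\int h_n^2 h_m^2\,dx$ according to the semiclassical regions of $h_n$: the bulk oscillating region $|x|\leq \lambda_n-\lambda_n^{-1/3}$, where $h_n^2$ is bounded on average by $(\lambda_n^2-x^2)^{-1/2}$; the Airy turning-point zone $||x|-\lambda_n|\lesssim \lambda_n^{-1/3}$, where $h_n^2\lesssim \lambda_n^{-1/3}$ (consistent with $\|h_n\|_{L^\infty}\lesssim \lambda_n^{-1/6}$ from Lemma~\ref{disp}); and the exponentially-decaying region beyond. Since $n\leq m$, $h_m$ remains in its bulk region on the support of $h_n$, so the main contribution is bounded on average by
$$\int_{|x|\leq \lambda_n-\lambda_n^{-1/3}} \frac{dx}{\sqrt{(\lambda_n^2-x^2)(\lambda_m^2-x^2)}}.$$
When $\lambda_m\gg \lambda_n$, the factor involving $\lambda_m$ is uniformly $\sim \lambda_m^{-1}$ on $|x|\leq \lambda_n$, giving $\lesssim 1/\lambda_m$ with no log. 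When $\lambda_n\sim \lambda_m$, the shared turning-point singularity makes the integrand $\sim (\lambda_n^2-x^2)^{-1}$, whose integration cut off at the Airy scale produces $\sim \log\lambda_n/\lambda_n \sim \log(n+1)/\lambda_m$. The Airy and exponential-decay regions yield smaller or comparable contributions.

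\textbf{Main obstacle.} The hardest part is~\eqref{bilin-L2-plan} itself. In the off-diagonal case $n\ll m$, localization of $h_n$ in $|x|\lesssim \lambda_n$ combined with the $\lambda_m^{-1}$ amplitude of $h_m^2$ there gives the bound easily. The near-diagonal case $n\sim m$ is the delicate one: a naive Cauchy--Schwarz via Lemma~\ref{disp} gives only $\|h_nh_m\|_{L^2}\lesssim \lambda_n^{-1/3}$, far from sufficient. The sharp constant requires the WKB analysis sketched above, carefully balancing the logarithmic divergence of $\int dx/(\lambda_n^2-x^2)$ against the Airy cutoff $\lambda_n^{-1/3}$ at the common turning points. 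This is the delicate bilinear Hermite estimate attributed to P.~G\'erard in the acknowledgements.
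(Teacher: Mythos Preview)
Your reduction to $\theta=0$ and $\theta=1$ via interpolation, and your $\H^1$ identity reducing the $\theta=1$ case to the $L^2$ estimate~\eqref{bilin-L2-plan}, are both correct and match the paper's strategy (the paper states the $\theta=1$ reduction more tersely, simply invoking~\eqref{derive_hermite}, but the content is the same).

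Where you diverge is in the proof of the $L^2$ bilinear estimate~\eqref{bilin-L2-plan}. You propose a semiclassical/WKB analysis, splitting into bulk, Airy and exponentially decaying regions and using the pointwise bound $h_n(x)^2\lesssim(\lambda_n^2-x^2)^{-1/2}$ in the bulk. This would work (the pointwise bound is genuine, not merely ``on average'', see e.g.\ Koch--Tataru~\cite{KochTataru}), and your log arises exactly where it should, from $\int_{\lambda_n^{-1/3}}^{\lambda_n}\frac{du}{\lambda_n u}$ when $\lambda_n\sim\lambda_m$. The paper, however, takes a completely different and more algebraic route: it computes the generating function
\[
I(\alpha,\beta)=\sum_{n,m\geq 0}\alpha^n\beta^m\|h_nh_m\|_{L^2}^2=\int_{\R}E(x,x,\alpha)E(x,x,\beta)\,dx
\]
in closed form via the Mehler-type identity of Lemma~\ref{wiki}, obtaining $I(\alpha,\beta)=\frac{1}{\sqrt{2\pi}}(1-\alpha)^{-1/2}(1-\beta)^{-1/2}(1-\alpha\beta)^{-1/2}$. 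Expanding each factor as $\sum_p c_p x^p$ with $c_p\sim p^{-1/2}$ yields the exact formula $\|h_nh_m\|_{L^2}^2=\frac{1}{\sqrt{2\pi}}\sum_{r\leq\min(n,m)}c_{n-r}c_{m-r}c_r$, from which the bound follows by elementary summation. This is the argument the authors learned from G\'erard; your WKB route is a different proof. The generating-function approach is entirely self-contained within the paper (it uses only Lemma~\ref{wiki}) and gives an exact identity rather than asymptotics; your approach is more robust in that it would survive replacing $x^2$ by a more general confining potential with no explicit Mehler kernel, at the cost of importing the pointwise Hermite bounds from outside.
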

\begin{proof}
We give an argument we learned from Patrick G\'erard.
It suffices to prove \eqref{eq.bilin} for $\theta=0$ and $\theta=1$ (the general case then follows by interpolation).
The case $\theta=1$ can be directly reduced to the case $\theta=0$ thanks to \eqref{derive_hermite}.
Let us now give the proof of \eqref{eq.bilin} in the case $\theta=0$.
Consider again  the function $E$ defined by \eqref{funct_E} which can also be expressed by \eqref{for}.
Let $0\leq \alpha,\beta<1$ and $x\in \R$. By \eqref{for} we have 
\begin{equation*}
 E(x,x,\alpha)=\frac1{\sqrt{\pi}}(1-\alpha^{2})^{-\frac12}\e^{-\frac{1-\alpha}{1+\alpha}x^{2}}.
\end{equation*}
Therefore, if we set
$$
I(\alpha,\beta)\equiv\int_{\R}E(x,x,\alpha)E(x,x,\beta)\text{d}x, 
$$
then we get
\begin{multline}\label{prod.herm}
I(\alpha,\beta)=\frac1{\pi}(1-\alpha^{2})^{-\frac12}(1-\beta^{2})^{-\frac12}\int_{\R}\e^{-\big(\frac{1-\alpha}{1+\alpha}+\frac{1-\beta}{1+\beta}\big)x^{2}}\text{d}x
\\
=\frac1{\sqrt{2\pi}}(1-\alpha)^{-\frac12}(1-\beta)^{-\frac12}(1-\alpha\beta)^{-\frac12}.
\end{multline}
On the other hand, coming back to the definition 
\begin{equation*}
\dis I(\alpha,\beta)=\sum_{n,m\geq 0}\alpha^{n}\beta^{m}\int_{\R}h_{n}^{2}(x)\,h_{m}^{2}(x)\text{d}x.
\end{equation*}
Hence to get a useful expression for the $L^2$ norm of the product of two Hermite functions, 
it suffices to expand \eqref{prod.herm} in entire series in $\alpha$ and $\beta$.
Write 
\begin{equation*}
(1-x)^{-\frac12}=\sum_{p\geq 0}c_{p}x^{p}, \quad\dis c_{0}=1,\quad \quad c_{p}=\frac{(2p-1)\,!}{2^{2p-1}\,p\,!\,(p-1)\,!},\quad p\geq 1.
\end{equation*}
Therefore, by the Stirling formula, there exists $C>0$ so that $\dis |c_{p}|\leq \frac C{\sqrt{p+1}}$ for all $p\geq 0$. Now  by \eqref{prod.herm} and the previous estimate
\begin{multline*}
\int_{\R}h_{n}^{2}(x)\,h_{m}^{2}(x)\text{d}x
=\frac1{\sqrt{2\pi}}\sum_{\substack{p,q,r\geq 0\\p+r=n,\;q+r=m}} c_{p}\,c_{q}\,c_{r}
\\
\leq C\sum_{0\leq r\leq \min( n,m)}(n-r+1)^{-\frac12}\,(m-r+1)^{-\frac12}\,(r+1)^{-\frac12}.
\end{multline*} 
Without restricting the generality we may suppose that $m\geq n$.
If $m\leq 2n$ then we obtain the needed bound by considering 
separately the cases when the sum runs over $r<m/2$ and $r\geq m/2$.
If  $m> 2n$, then we can write $(m-r+1)^{-\frac12}\leq c(1+m)^{-\frac12}$ and the needed bound follows directly.
Therefore we get \eqref{eq.bilin} in the case $\theta=0$.
This completes the proof of Lemma~\ref{patrick}.
\end{proof}
\noindent 
Denote by $u(\om,t,x)$ the free Schr\"odinger solution with initial condition $\phi(\om,x)$, i.e.
\begin{equation*}
u(\om,t,x)=\e^{-itH}\phi(x,\om)=\sum_{n\geq 0}\frac{\sqrt{2}}{\lambda_{n}}\e^{-it\lambda^{2}_{n}}\,g_{n}(\om)\,h_{n}(x).
\end{equation*}
Write the decomposition
$ u=u_0+\sum_{N}u_{N}$, where the summation is taken over the dyadic integers and for $N$ a dyadic integer 
\begin{equation*}
u_{N}(\om,t,x)=\sum_{N\leq n< 2N}\a_{n}(t)h_{n}(x)g_{n}(\om),\quad \a_{n}(t)=\sqrt{\frac{2}{2n+1}}\e^{-i(2n+1)t}\, .
\end{equation*}
Let us fix $t\in\R$ and $0\leq \theta<\frac12$. It suffices to show that the expression
$$
J(t,x,\omega)\equiv |\sum_{M}\sum_{N}H^{\theta/2}\big(u_{N}\,u_{M})|
$$
belongs to $L^2(\R\times\Omega)$ (here the summation is again taken over the dyadic values of $M,N$).
Using the Cauchy-Schwarz inequality, a symmetry argument and summing geometric series, for all $\eps>0$ we can write
\begin{equation}\label{eq.dya}
J(t,x,\omega)
\leq C\big(\sum_{N\leq M}M^{\eps}|H^{\theta/2}\big(u_{N}\,u_{M}\big)|^{2} \big)^{\frac12}\, .
\end{equation}
Coming back to the definition we can write
\begin{equation*}
H^{\theta/2}\big(u_{N}\,u_{M}\big)=\sum_{\substack{N\leq n\leq 2N\\[2pt]M\leq m\leq 2M}}\a_{n}\,\a_{m}\,g_{n}\,g_{m}\,H^{\theta/2}\big(h_{n}\,h_{m}\big).
\end{equation*}
We now estimate $\|H^{\theta/2}\big(u_{N}\,u_{M}\big)\|_{L^{2}(\Omega)}$. We make the expansion
\begin{multline*}
|H^{\theta/2}
\big(u_{N}\,u_{M}\big)|^{2}=\\
\sum_{\substack{N\leq n_{1},n_{2}\leq 2N\\[2pt]M\leq m_{1},m_{2}\leq 2M}}\a_{n_{1}}\,
\ov{\a}_{n_{2}}\,\a_{m_{1}}\,\ov{\a_{m_{2}}}\,g_{n_{1}}\,\ov{g_{n_{2}}}\,g_{m_{1}}\,\ov{g_{m_{2}}}\,H^{\theta/2}\big(h_{n_{1}}\,h_{m_{1}}\big)\,
\ov{H^{\theta/2}\big(h_{n_{2}}\,h_{m_{2}}\big)}.
\end{multline*}
The random variables $g_{n}$ are centered and independent, and consequently, we have
$\E\big[  \, g_{n_{1}}\,\ov{g_{n_{2}}}\,g_{m_{1}}\,\ov{g_{m_{2}}}  \,  \big]=0,$
unless the indexes are pairwise equal (i.e. ($n_{1}=n_{2}$ and $m_{1}=m_{2}$), or ($n_{1}=m_{2}$ and $n_{2}=m_{1}$).
 This implies that 
\begin{equation}\label{eq.esp}
\int_{\Omega} |H^{\theta/2}\big(u_{N}\,u_{M}\big)|^{2} \leq 
C\sum_{\substack{N\leq n\leq 2N\\[2pt]M\leq m\leq 2M}}|\a_{n}|^{2}|\a_{m}|^{2}|H^{\theta/2}\big(h_{n}\,h_{m}\big)|^{2}.
\end{equation}
We integrate \eqref{eq.esp} in $x$ and by  \eqref{eq.bilin} we deduce that for all $\eps>0$ 
\begin{eqnarray*}
\int_{\Omega\times\R}
|H^{\theta/2}(u_{N}\,u_{M})|^{2}  
&\leq& C\sum_{\substack{N\leq n\leq 2N\\[2pt]M\leq m\leq 2M}}|\a_{n}|^{2}|\a_{m}|^{2}\int_{\R}|
H^{\theta/2}\big(h_{n}\,h_{m}\big)|^{2}\text{d}x\nonumber\\
&\leq& C\sum_{\substack{N\leq n\leq 2N\\[2pt]M\leq m\leq 2M}}(\max{(M,N)})^{-\frac12+\theta+\eps}|\a_{n}|^{2}|\a_{m}|^{2}.
\end{eqnarray*}
Therefore using that $|\a_{n}|\leq \<n\>^{-\frac12}$, we get
\begin{eqnarray*}
\int_{\Omega\times\R}(J(t,x,\om))^2
& \leq &
 C \sum_{N\leq M}
\sum_{\substack{N\leq n\leq 2N\\[2pt]M\leq m\leq 2M}}
M^{-\frac12+\theta+2\eps}|\a_{n}|^{2}|\a_{m}|^{2}
\\
& \leq  &
C \sum_{N\leq M}
\sum_{\substack{N\leq n\leq 2N\\[2pt]M\leq m\leq 2M}}
M^{-\frac12+\theta+2\eps}(MN)^{-1}<\infty,
\end{eqnarray*}
provided $\eps$ is small enough, namely $\eps$ such that $-\frac12+\theta+2\eps<0$.
This completes the proof of \eqref{eq.carre}.

\end{document}